\theoremstyle{definition}
\newtheorem{definition}{Definition}[section]
\newtheorem{remark}[definition]{Remark}
\newtheorem{example}[definition]{Example}
\theoremstyle{plain}
\newtheorem{lemma}[definition]{Lemma}
\newtheorem{proposition}[definition]{Proposition}
\newtheorem{theorem}[definition]{Theorem}
\newtheorem{corollary}[definition]{Corollary}
\newtheorem{notation}[definition]{Notation}
\newtheorem{defis}[definition]{Definitions}
\newtheorem{quest}[definition]{Question}
\newcommand{\Z}{\mathrm{Z}}
\newcommand{\M}{\mathrm{M}}
\newcommand{\T}{\mathcal{T}}
\newcommand{\A}{\mathcal{A}}
\newcommand{\C}{\mathbb{C}}
\newcommand{\Tri}{\mathrm{Trian \,}}
\newcommand{\Id}{\mathrm{Id}}
\newcommand{\Nil}{\mathrm{Nil}}
\newcommand{\Lann}{\mathrm{Lann}}
\newcommand{\Rann}{\mathrm{Rann}}
\newcommand{\Imm}{\mathrm{Im}}
\subjclass[2010]{15A78, 16W20, 16W25, 47B47}
\keywords{Triangular algebras, $\sigma$-derivations, $\sigma$-biderivations, $\sigma$-commuting maps.}
\begin{document}

\title[Automorphisms, $\sigma$-biderivations and $\sigma$-commuting maps of TA's]{Automorphisms, $\sigma$-biderivations and $\sigma$-commuting maps of triangular algebras}

\author[Mart\'in Gonz\'alez, Repka, S\'anchez-Ortega]{C\'andido Mart\'in Gonz\'alez$^{1}$, Joe Repka$^{2}$, Juana S\'anchez-Ortega$^{3}$}

\address[1]{Departamento de \'Algebra, Geometr\'ia y Topolog\'ia, Universidad de M\'alaga, M\'alaga, Spain}
\address[2]{University of Toronto\\ Toronto, ON, Canada}
\address[3]{Department of Mathematics and Applied Mathematics. University of Cape Town \\ Cape Town, South Africa}
\email{candido@apncs.cie.uma.es}
\email{repka@math.toronto.edu}
\email{juana.sanchez-ortega@uct.ac.za}

\maketitle


\begin{abstract}
Let $\A$ be an algebra and $\sigma$ an automorphism of $\A$. A linear map $d$ of $\A$ is called a $\sigma$-derivation of $\A$ if $d(xy) = d(x)y + \sigma(x)d(y)$, for all $x, y \in \A$. A bilinear map $D: \A \times \A \to \A$ is said to be a $\sigma$-biderivation of $\A$ if it is a $\sigma$-derivation in each component. An additive map $\Theta$ of $\A$ is $\sigma$-commuting if it satisfies $\Theta(x)x - \sigma(x)\Theta(x) = 0$, 
for all $x \in \A$. In this paper, we introduce the notions of inner and extremal $\sigma$-biderivations and of proper $\sigma$-commuting maps. One of our main results states that, under certain assumptions, every $\sigma$-biderivation of a triangular algebras is the sum of an extremal $\sigma$-biderivation and an inner $\sigma$-biderivation. Sufficient conditions are provided on a triangular algebra for all of its $\sigma$-biderivations 
(respectively, $\sigma$-commuting maps) to be inner (respectively, proper). A precise description of $\sigma$-commuting maps of triangular algebras is also given. A new class of automorphisms of triangular algebras is introduced and precisely described. We provide many classes of triangular algebras whose automorphisms can be precisely described.

\end{abstract}


\section{Introduction}

Triangular algebras were introduced by Chase \cite{Cha} in the early 1960s. He ended up with these structures 
in the course of his study of the asymmetric behavior of semi-hereditary rings; he provided an example of a left semi-hereditary ring which is not right semi-hereditary. Since their introduction, triangular algebras have played an important role in the development of ring theory. In 1966, Harada \cite{H} characterized hereditary semi-primary rings by using triangular algebras; in his paper, triangular algebras are named generalized triangular matrix rings. Later on, Haghany and Varadarajan \cite{HV} studied many properties of triangular algebras; they used the terminology of formal triangular matrix rings. 

Derivations are a fundamental notion in mathematics. They have been objects of study by many well-known mathematicians. In the middle/late 1990s, several authors undertook the study of derivations and related maps over some particular families of triangular algebras (see for e.g. \cite{Chr, CM, FM, Jn, Zh} and references therein). Motivated by those works Cheung \cite{Ch1} initiated, in his thesis, the study of linear maps of (abstract) triangular algebras. He described automorphisms, derivations, commuting maps and Lie derivations of triangular algebras. Cheung's work has inspired several authors to investigate many distinct maps of triangular algebras.

The present paper is devoted to the study of automorphisms and the so-called $\sigma$-biderivations and $\sigma$-commuting maps of triangular algebras. Our motivation arises from recent developments in the theory of maps of triangular algebras; more concretely, we could say that our interest was sparked by the work of Cheung \cite{Ch2} on commuting maps of triangular algebras, the paper of Benkovi\v{c} \cite{Bk2} on biderivations of triangular algebras, and the study of $\sigma$-biderivations and $\sigma$-commuting maps of nest algebras due to Yu and Zhang \cite{YZ}. 

Because the group of automorphisms is an important tool for understanding the underlying algebraic structure, it has been most thoroughly investigated. The groups of automorphisms of triangular algebras have been investigated by many authors; see, for example, \cite{AW, BK, Ch1, Co, CX, Jn, KDW, K, L1, L2}. 

The study of biderivations of rings and algebras has a rich history and is still an active area of research. Also, biderivations have already been shown to be of utility 
in connection with distinct areas; for example, Skosyrskii \cite{Sk} used them to investigate 
noncommutative Jordan algebras, while Farkas and Letzter \cite{FL} treated them in their study of Poisson algebras. 

Let $\A$ be an algebra over a unital commutative ring of scalars; a bilinear map $D: \A \times \A \to \A$ is said to be a {\bf biderivation of} $\A$ if it is a derivation in each argument. The map $\Delta_\lambda: (x, y) \mapsto \lambda [x, y]$ is an example of a biderivation, provided that $\lambda$ lies in the center $\Z(\A)$ of $\A$. Biderivations of this form  
are called {\bf inner biderivations}; here, $[x, y]$ stands for the commutator $xy - yx$.  

Notice that if $d$ is a derivation of a commutative algebra, then the map $(x, y) \mapsto d(x)d(y)$ is a biderivation. In the noncommutative case, it happens quite often that all biderivations are inner. The classical problem under study is to determine whether every biderivation of a noncommutative algebra is inner. 

In 1993, Bre\v{s}ar et al. \cite{BrMM} proved that every biderivation of a noncommutative prime ring $R$ is inner. One year after, Bre\v{s}ar \cite{Br1} extended the result above to semiprime rings. Those results have been shown to be very useful in the study of the so-called commuting maps that, as we will point out below, are closely related to biderivations.

Recently, motivated by Cheung's work, Zhang et al. \cite[Theorem 2.1]{ZhFLW} proved that, under some mild conditions, every biderivation of a nest algebra is inner. Later on, Zhao et al. \cite{ZYW} proved, 
using the results of \cite{ZhFLW}, that every biderivation of an upper triangular matrix algebra is a sum of an inner biderivation and an element of a particular class of biderivations. Benkovi\v{c} \cite{Bk2} was the first author to study biderivations of (abstract) triangular algebras. He determined the conditions which need to be imposed on a triangular algebra to ensure that all its biderivation are inner. 

It turns out that biderivations are closely connected to the thoroughly studied commuting (additive) maps. A map $\Theta$ of an algebra $\A$ into itself is said to be {\bf commuting} if $\Theta(x)$ commutes with $x$, for every $x \in \A$. The usual goal when dealing with a commuting map is to provide a precise description of its form. It is straightforward to check that the identity map and any central map (a map having its image in the center of the algebra) are examples of commuting maps. Moreover, the sum and the pointwise product of commuting maps produce commuting maps. For example, it is easy to check that the following map
\begin{equation*} \label{propercomm}
\Theta (x) = \lambda x + \Omega(x), \quad \, \forall \, x\in \A,
\end{equation*} 
is commuting, for any $\lambda \in \Z(\A)$
and any choice of central map $\Omega$ of $\A$. We will refer to commuting maps of this form as {\bf proper commuting maps}.

The first important result on commuting maps dates back to 1957 and it is due to Posner \cite{Po}. Posner's theorem states that the existence of a nonzero commuting derivation of a prime ring implies the commutativity of the ring. Notice that Posner's theorem can be reformulated as follows: 

\smallskip

{\bf Theorem.} {\it If $d$ is a commuting derivation of a noncommutative prime ring $R$, then $d = 0$.}

\smallskip 

osner's theorem has been generalized by a number of authors in many different ways. We refer the reader to the well-written survey \cite{Br3}, where the development of the theory of commuting maps and related maps is presented; see also \cite[Section 3]{Br3} for applications of biderivations and commuting maps to other 
areas. Let us point out here that one of the main applications of commuting maps can be found in their connection with several conjectures on Lie maps of associative rings, formulated by Herstein \cite{He} in 1961.

Let us now come back to the relationship between biderivations and commuting maps; notice that if $\Theta$ is a commuting map of an algebra $\A$, then the map $D_\Theta: \A \times \A \to \A$ given by $D_\Theta(x, y) = [\Theta(x), y]$  
is a biderivation of $\A$. Moreover, suppose that there exists $\lambda \in \Z(\A)$ such that $[\Theta(x), y] = \lambda[x, y]$, 
for all $x, y \in \A$; that is to say that $D_\Theta$ is an inner biderivation. Then it follows that $\Omega_\Theta(x):= \Theta(x) - \lambda x \in \Z(\A)$, for every $x \in \A$; in other words, $\Theta$ is a proper commuting map.
Therefore, in order to show that every commuting map is proper, it is enough to prove that  
every biderivation is inner.

Let $\sigma$ be an automorphism of an algebra $\A$. Recall that a linear map $d$ of $\A$ is called a {\bf $\sigma$-derivation} if $d(xy) = \sigma(x) d(y) + d(x) y$, 
for all $x, y \in \A$. A {\bf $\sigma$-biderivation} is a bilinear map which is a $\sigma$-derivation in each argument.
 
The structure of $\sigma$-biderivations of prime rings was investigated by Bre\v{s}ar \cite{Br2}. As an application, he characterized additive maps $f$ of a prime ring satisfying $f(x)x = \sigma(x)f(x)$, 
for all $x$ in the ring. We will refer to such 
maps as {\bf $\sigma$-commuting maps}. In the context of triangular algebras, $\sigma$-biderivations and $\sigma$-commuting maps of nest algebras have been studied by Yu and Zhang \cite{YZ} in 2007. 

\smallskip

The paper is organized as follows: in Section 2 we gather together basic definitions and elementary properties. Section 3 is devoted to describing broad classes of triangular algebras whose automorphisms are partible in the sense of  \cite[Definition 5.1.6]{Ch1}. We call a triangular algebra partible if all its automorphisms are partible. In Section 4 we introduce and describe precisely a new class of automorphisms of triangular algebras, the so-called {\bf bimodule-preserving}. Section 5 deals with the study of $\sigma$-biderivations of partible triangular algebras, while $\sigma$-commuting maps are investigated in Section 6. 


\section{Preliminaries}

Throughout the paper we consider unital associative algebras, and we assume without further mention that all of them are algebras over a fixed commutative unital ring of scalars $R$. In the subsequent subsections, we recall some definitions and basic results, and introduce some notation.

\subsection{Triangular algebras} \label{preli}

Let $A$ and $B$ be unital associative algebras and $M$ a nonzero $(A, B)$-bimodule. The following set becomes an associative algebra under the usual matrix operations:
\[
\T = \Tri(A, M, B) = \left(
\begin{array}{cc}
A & M  \\
  & B 
\end{array}
\right) = 
\left\{
\left(
\begin{array}{cc}
a & m  \\
  & b 
\end{array}
\right): \, a \in A, m\in M, b \in B 
\right\}.
\]
An algebra $\T$ is called a {\bf triangular algebra} if there exist algebras $A$, $B$ and a nonzero $(A, B)$-bimodule $M$ such that $\T$ is isomorphic to $\Tri(A, M, B)$. 

Given $\T = \Tri(A, M, B)$, 
a triangular algebra,  
let us denote by $\pi_A$, $\pi_B$ the 
two natural projections from $\T$ onto $A$, $B$, respectively defined as follows: 
\[
\pi_A: 
\left(
\begin{array}{cc}
a & m  \\
  & b 
\end{array}
\right) \mapsto a, 
\quad \quad 
\pi_B: 
\left(
\begin{array}{cc}
a & m  \\
  & b 
\end{array}
\right) \mapsto b, \quad  \mbox{ for all } \, \left(
\begin{array}{cc}
a & m  \\
  & b 
\end{array}
\right) \in \T.
\]
The center $\Z(\T)$ of $\T$ was computed in \cite{Ch1}  
(see also \cite[Proposition 3]{Ch2}); 
it is the following set:
\[
\Z(\T) = \left\{
\left(
\begin{array}{cc}
a & 0  \\
  & b 
\end{array}
\right) \in \T: \, a \in \Z(A), \, b \in \Z(B) \, \mbox{ and } \, am = mb, \, \,  
\mbox{ for all } m\in M
\right\}.
\]
At this point, it is worth introducing the following two sets:
\[
L := \{a \in A: aM = 0\}, \quad \quad R := \{b \in B: Mb = 0\}.
\] 
It is easy to see that $L$ and $R$ are ideals of $A$ and $B$, respectively. Moreover, $LM = MR = 0$. Set $\Z := \Z(\T)$; by an abuse of notation we denote by $\bar{a}$ and $\bar{b}$ the classes of the elements $a \in \pi_A(\Z)$,  $b \in \pi_B(\Z)$ in $\pi_A(\Z)/ (\pi_A(\Z) \cap L)$ and $\pi_B(\Z)/ (\pi_B(\Z) \cap R)$, respectively. Then the map $\tau: \pi_A(\Z)/ (\pi_A(\Z) \cap L) \to  \pi_B(\Z)/ (\pi_B(\Z) \cap R)$ given by $\tau(\bar{a}):= \bar{b}$, where $b = \pi_B(x)$ for some $x \in \T$ with $\pi_A(x) = a$, is well-defined and an algebra isomorphism. In particular, if the bimodule $M$ is faithful as a left $A$-module and also as a right $B$-module, we have that $L = R = 0$ and therefore we can conclude that there exists a unique algebra isomorphism $\tau:\pi_A(\Z) \to \pi_B(\Z)$ satisfying that $am = m\tau(a)$, for all $m \in M$ and $a \in \pi_A(\Z)$. 

Upper triangular matrix algebras, block upper triangular matrix algebras, triangular Banach algebras and nest algebras constitute the most important examples of triangular algebras. (See, for example, \cite{Ch1}.)

\begin{notation}\label{notation}
{\rm 
Let $\T = \Tri(A, M, B)$ be a triangular algebra. 
We will write $1_A$, $1_B$ to denote the units of the algebras $A$, $B$, respectively. The unit of $\T$ is the element: 
\[
1 := \left(
\begin{array}{cl}
1_A & 0  \\
    & 1_B
\end{array}
\right). 
\]
It is straightforward to check that the following elements are orthogonal idempotents of $\T$.
\[
p := 
\left(
\begin{array}{cc}
1_A & 0  \\
    & 0
\end{array}
\right), 
\quad \quad
q := 1 - p = 
\left(
\begin{array}{cl}
0 & 0  \\
  & 1_B
\end{array}
\right) \in \T. 
\]
Hence, we can consider the Peirce decomposition of $\T$ associated to the idempotent $p$. In other words, we can write $\T = p\T p \oplus p \T q \oplus q \T q$. Note that $p\T p$, $q \T q$ are subalgebras of $\T$ isomorphic to $A$, $B$, respectively; while $p \T q$ is a $(p \T p, q \T q)$-bimodule isomorphic to $M$. To ease the notation in what follows, we will identify $p\T p$, $q \T q$, $p \T q$ with $A$, $B$, $M$, respectively. Thus, any element of $\T$ can be expressed as:
\[
x = \left(
\begin{array}{cc}
a & m  \\
  & b 
\end{array}
\right) = pap + pmq + qbq = a + m + b,
\]
by using the identification above. 
}
\end{notation}

\subsection{Derivations and related maps}
In this subsection, we collect all the definitions of the maps that will be used in
the paper. Although we have already introduced some of those maps in the introductory section, we will include their definition here for the sake of completeness. 

\begin{defis}
{\rm
Let $\A$ be an algebra and $\sigma$ an automorphism of $\A$. Let us denote by $\Id_\A$ the identity map on $\A$.
\begin{itemize}
\item A linear map $d: \A \to \A$ is called a {\bf derivation of} $\A$ if it satisfies 
\[
d(xy) = d(x)y + xd(y), \quad \forall \, x, y \in \A.
\]
\item A linear map $d: \A \to \A$ is called a {\bf $\sigma$-derivation of} $\A$ if it verifies 
\[
d(xy) = d(x)y + \sigma(x)d(y), 
\quad \forall \, x, y \in \A.
\]
Note that every derivation is an $\Id_\A$-derivation. In the literature (see, for example, \cite{HW, YaZh}), some authors have used the terminology of skew-derivations for $\sigma$-derivations.  
\item A bilinear map $D: \A \times \A \to \A$ is said to be a {\bf biderivation of} $\A$ if it is a derivation in each argument; that is to say that for every $y\in A$, the maps $x \mapsto D(x, y)$ and $x \mapsto D(y, x)$ are derivations of $\A$. In other words:
\begin{center}
$D(xy, z) = x D(y, z) + D(x, z)y, \quad  D(x, yz) = y D(x, z) + D(x, y)z$,
\end{center}
for all $x, y, z\in \A$. The map $D$ is called a {\bf $\sigma$-biderivation of} $\A$ provided that $D$ is a $\sigma$-derivation in each argument.
\item Suppose that $\A$ is noncommutative and take $\lambda \in \Z(\A)$. The bilinear map $\Delta_\lambda: \A \times \A \to \A$ given by 
\[
\Delta_\lambda (x, y) = \lambda [x, y], \quad 
\forall \, x, y \in \A
\] 
is an example of a biderivation. Biderivations of the form above are called {\bf inner biderivations}.
\item Given $x_0 \in \A$ such that $x_0 \notin \Z(\A)$; suppose that $x_0$ satisfies 
$[[x, y], x_0]Ê= 0$ for all $x, y \in \A$. It was proved in \cite[Remark 4.4]{Bk2} that the bilinear map $\psi_{x_0}: \A \times \A \to \A$ given by 
\[
\psi_{x_0} (x, y) = [x, [y, x_0]], \quad \forall \, x, y \in \A
\]
is a biderivation. Biderivations of this form appear first in \cite{Bk2}; they were named {\bf extremal biderivations}.
\end{itemize}
}
\end{defis}

In this paper, we introduce the notions of inner $\sigma$-biderivations and extremal $\sigma$-biderivations (see Definitions \ref{sigmainn} and \ref{defiextbi} below).

\begin{example} \label{bide}
Let $\A$ be an algebra and $\sigma$ a nontrivial automorphism of $\A$. It is easy to see that the map $d := \Id_\A - \sigma$ is a $\sigma$-derivation of $\A$, although $d$ is not, in general, a derivation of $\A$. For example, let $K$ be a field of characteristic not $2$, and take the algebra of polynomials $K[x]$ 
and $\sigma$ the automorphism of $K[x]$ which maps $x$ to $-x$. Then the $\sigma$-derivation $d$ of $K[x]$, defined as before, satisfies 
\begin{align*} 
& d(x^2) = x^2 - \sigma(x^2) = x^2 - \sigma(x) \sigma(x) = x^2 - (-x)(-x) = 0,
\\
& d(x)x + xd(x) = 4x^2, 
\end{align*}
since $d(x) = x - \sigma(x) = 2x$. This shows that $d$ is not a derivation of $K[x]$. On the other hand, the map $D(x, y) = d(x)d(y)$ is a $\sigma$-biderivation, since $K[x]$ is a commutative algebra. But $D$ is not a biderivation, since $D(x^2, x) = 0$ and $D(x, x)x + xD(x, x) = 8x^3$. 

Notice that these examples also work for the finite-dimensional algebra of polynomials of degree $\leq N$ (i.e., the polynomial algebra $K[x]$ modulo the ideal generated by $x^{N+1}$), since $\sigma$ preserves degrees.
\end{example}

\begin{defis}
{\rm
Let $\A$ be an algebra, $\sigma$ an automorphism of $\A$ and $\Theta$ a linear map 
$\A \to \A$.
\begin{itemize}
\item The map $\Theta$ is a {\bf commuting map}  
$\A \to \A$ if $\Theta(x)$ commutes with $x$,
for every $x \in \A$, i.e., $[\Theta(x), x] = 0$, 
for all $x\in \A$. 
\item $\Theta$ is called {\bf $\sigma$-commuting} if it verifies 
$\Theta(x)x = \sigma(x)\Theta(x)$,  
for all $x \in \A$. In particular, in the case 
where $\sigma = \Id_A$,
the notion of a commuting map is recovered. A this point, it is worth mentioning that the concept of a skew-commuting map has a different meaning (see, for example, \cite{Br01}).
\end{itemize}
}
\end{defis}

\begin{example}
Let $\T_2(\C)$ be the algebra of 
$2 \times 2$ upper triangular matrices over the complex numbers. The map $\sigma : \T_2(\C) \to \T_2(\C)$ given by
\[
\sigma \left(
\begin{array}{cc}
a & b  \\
  & c 
\end{array}
\right) := \left(
\begin{array}{cr}
a & -b  \\
  & c 
\end{array}
\right), \quad \mbox{for all} \, \, \left(
\begin{array}{cc}
a & b  \\
  & c 
\end{array}
\right) \in \T_2(\C),
\]
is an automorphism of $\T_2(\C)$. It is straightforward to show that the linear map $\Theta$ of $\T_2(\C)$ given by 
\[
\Theta \left(
\begin{array}{cc}
a & b  \\
  & c 
\end{array}
\right) := \left(
\begin{array}{cr}
a &  b  \\
  & -c 
\end{array}
\right), \quad \mbox{for all} \, \, \left(
\begin{array}{cc}
a & b  \\
  & c 
\end{array}
\right) \in \T_2(\C),
\]
is a $\sigma$-commuting map of $\T_2(\C)$; however, it is not commuting since, for example, for $x = \left(
\begin{array}{cc}
0 & 1  \\
  & 1 
\end{array}
\right)$, we have that $[x, \Theta(x)] = \left(
\begin{array}{cr}
0 & -2  \\
  &  0 
\end{array} 
\right) \neq 0$.
\end{example}

\begin{remark}
When dealing with a triangular algebra $\T = \Tri(A, M, B)$, it is quite common to assume that the bimodule $M$ is faithful as a left $A$-module and also as a right $B$-module. At this point, it is worth mentioning 
that we can indeed restrict our attention to triangular algebras having the faithfulness condition above.  In fact, let $\T = \Tri(A, M, B)$ be a triangular algebra.  Notice that $\Tri(A/L, M, B/R)$ is a triangular algebra and $M$ is faithful as a left $A/L$-module and as a right $B/R$-module, where the annihilators $L$ and $R$ are the ideals of $A$ and $B$, respectively, defined in Subsection \ref{preli}.  

It is easy to check that $L\oplus R$ is an ideal of $\Tri(A, M, B)$ and that 
\[
\Tri(A/L, M, B/R) \cong
\Tri(A, M, B)/(L \oplus R).
\]
\end{remark}

\section{Partible automorphisms of triangular algebras} 

Let $\T = \Tri(A, M, B)$ be a triangular algebra. In what follows, we will assume that the bimodule $M$ is faithful as a left $A$-module and also as a right $B$-module, although these assumptions might not always be necessary.

Automorphisms of triangular algebras were studied by Cheung \cite[Chapter 5]{Ch1}. He introduced and characterized the following class of automorphisms: 

\begin{definition} \cite[Definition 5.1.6]{Ch1} \label{autpart}
{\rm
An automorphism $\sigma$ of a triangular algebra $\T = \Tri(A, M, B)$ is said to be {\bf partible with respect to $A$, $M$ and $B$}, if it can be written as $\sigma = \phi_z \bar{\sigma}$, where $z\in \T$, $\phi_z$ is the inner automorphism $\phi_z(x) = z^{-1}xz$, associated to $z$, and $\bar{\sigma}$ is an automorphism of $\T$ satisfying that $\bar{\sigma}(A) = A$, $\bar{\sigma}(M) = M$ and $\bar{\sigma}(B) = B$.
}
\end{definition}

As Cheung pointed out inner automorphisms are all partible. 

\begin{remark}
Let $\T = \Tri(A, M, B)$ be a triangular algebra. It is not difficult to prove that every automorphism $\sigma$ of $\T$ satisfying that ${\sigma}(A) = A$, ${\sigma}(M) = M$ and ${\sigma}(B) = B$ can be written as follows:
\begin{equation} \label{autom}
\sigma \left(
\begin{array}{cc}
a & m  \\
  & b 
\end{array}
\right) = 
\left(
\begin{array}{ccc}
f_\sigma(a) &&  \nu_\sigma(m)  
\\
&&
\\
  && g_\sigma(b) 
\end{array}
\right),
\end{equation}
where $f_\sigma$, $g_\sigma$ are automorphisms of $A$, $B$, respectively, and $\nu_\sigma: M \to M$ is a linear bijective map which satisfies $\nu_\sigma(am) = f_\sigma(a)\nu_\sigma(m)$ and $\nu_\sigma(mb) = \nu_\sigma(m) g_\sigma(b)$, 
for all $a \in A$, $b \in B$, $m \in M$. 
\end{remark}

In this paper we will restrict our attention to the following class of triangular algebras:

\begin{definition}
A triangular algebra $\T$ is said to be {\bf partible} if every automorphism of $\T$ is partible. 
\end{definition}

See \cite[Example 5.2.4]{Ch1} for an example of a triangular algebra which is not partible.

\medskip

\begin{proposition} 
Upper triangular matrix algebras and nest algebras are partible.
\end{proposition}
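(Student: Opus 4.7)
The plan is to reduce partibility of an automorphism $\sigma$ of $\T = \Tri(A, M, B)$ to showing that $\sigma(p)$ and $p$ are conjugate in $\T^\times$, where $p$ is the canonical idempotent of Notation~\ref{notation}. Indeed, if $z \in \T^\times$ satisfies $\sigma(p) = z^{-1}pz$, then $\bar\sigma := \phi_{z^{-1}} \circ \sigma$ sends $p$ to $z\,\sigma(p)\,z^{-1} = p$, and consequently $q = 1-p$ to $q$. The Peirce identifications $A = p\T p$, $B = q\T q$ and $M = p\T q$ then force $\bar\sigma(A) = A$, $\bar\sigma(B) = B$ and $\bar\sigma(M) = M$, so $\sigma = \phi_z\,\bar\sigma$ is partible with respect to $A$, $M$ and $B$ in the sense of Definition~\ref{autpart}. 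Thus the whole proposition reduces to verifying the conjugacy of $\sigma(p)$ with $p$ in each of the two families.

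For an upper triangular matrix algebra $T_n(R)$, realized as $\T = \Tri(A, M, B)$ by grouping the first $k$ and last $n-k$ rows and columns, write the Peirce decomposition of $e := \sigma(p)$ as $e = a + m + b$, with $a \in A$, $m \in M$ and $b \in B$. Squaring $e$ yields $a^2 = a$, $b^2 = b$ and $am + mb = m$. The key input is that $\sigma$ induces an algebra isomorphism $p\T p \to e\T e$ which, after projecting onto the diagonal quotients (where $T_n(R)$ becomes a product of copies of $R$), pins down the ``rank profiles'' of $a$ and $b$: $a$ must be conjugate to $1_A$ inside $A$ and $b$ to $0$ inside $B$. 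Composing with an initial inner automorphism of $\T$ realizing both of these conjugations, we may assume $a = 1_A$ and $b = 0$, so that $e = p + m$ for some $m \in M$. Since $M^2 = 0$, the element $z := 1 - m$ has inverse $1 + m$, and a direct computation gives $(1+m)(p+m)(1-m) = p$, confirming the required conjugacy.

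For nest algebras the strategy is parallel but simpler: one invokes the classical fact that every automorphism of a nest algebra is spatial, i.e., implemented by an invertible operator that normalizes the nest. When the nest algebra is realized as $\T = \Tri(A, M, B)$ via a gap in the nest, this implementing operator can be chosen to lie in $\T^\times$, and the resulting $\sigma$ is already inner, so the trivial decomposition $\sigma = \phi_z\,\Id_\T$ exhibits partibility. The main obstacle will be the rank-matching step in the matrix case for a general commutative ring $R$: without field-theoretic dimension arguments, the classification of idempotents of $T_n(R)$ up to inner conjugation is subtle, and the identification $a \sim 1_A$, $b \sim 0$ will most likely be handled by a local-to-global reduction modulo maximal ideals of the center, together with careful bookkeeping of the diagonal data preserved by $\sigma$; once this is accomplished, the remainder of the argument is a formal computation with idempotents.
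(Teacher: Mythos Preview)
Your reduction of partibility to the conjugacy of $\sigma(p)$ with $p$ is valid, and for nest algebras your argument coincides with the paper's: both simply invoke the classical fact that every automorphism of a nest algebra is inner (the paper cites \cite{L1,L2}), so that $\sigma = \phi_z$ already, and partibility is immediate.

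For upper triangular matrix algebras, however, you take a substantially harder road than the paper, and you leave a genuine gap. The paper's proof is one line: it cites Kezlan's theorem \cite{K} that every $R$-algebra automorphism of $T_n(R)$, for $R$ a commutative unital ring, is inner. Since inner automorphisms are partible, this finishes the matrix case exactly as it finishes the nest-algebra case. By contrast, you attempt to verify directly that the idempotent $e = \sigma(p)$ is conjugate to $p$, via a ``rank profile'' argument showing that its diagonal blocks $a$ and $b$ are conjugate to $1_A$ and $0$ respectively. You yourself flag this step as the main obstacle over a general commutative ring $R$, proposing a local-to-global reduction that is not carried out; as written, the argument is incomplete precisely at this point. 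Note that establishing this conjugacy for arbitrary $R$ is essentially equivalent in difficulty to (a special case of) Kezlan's theorem, so you are re-proving by hand what the paper simply quotes. The cleanest fix is to replace your entire treatment of $T_n(R)$ by the same citation the paper uses: once every automorphism is inner, $\sigma(p) = z^{-1}pz$ holds trivially and your reduction applies --- or, more directly, $\sigma = \phi_z\,\Id_\T$ is already a partible decomposition.
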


\begin{proof}
It follows from the fact that inner automorphisms are partible by taking into account the fact that automorphisms of upper triangular matrix algebras and nest algebras are all inner. For more details, see \cite{K} for upper triangular matrix algebras and \cite{L1, L2} for nest algebras.
\end{proof}

Cheung \cite[Theorem 5.3.2]{Ch1} gave sufficient conditions on a triangular algebra $\T$ to assert that $\T$ is partible. 
In what follows, we will make use of Cheung's result to provide many classes of partible triangular algebras. Let us start by introducing the following condition based on idempotents.

\begin{definition}
We will say that an algebra $\A$ satisfies {\bf Condition (I)} if any idempotent $e \in \A$ such that $e\A(1-e) = 0$ 
must satisfy $(1 - e) \A e = 0$. 
\end{definition}

The next result collects some examples of algebras satisfying Condition (I).

\begin{proposition}
Let $\A$ be an algebra and consider the following properties:
\begin{itemize}
\item[\rm{(i)}] $\A$ is commutative.
\item[\rm{(ii)}] Every idempotent of $\A$ is central. 
\item[\rm{(iii)}] $\A$ satisfies {\rm Condition (I)}.
\end{itemize}
Then {\rm (i)}Ê$\, \, \Rightarrow$ {\rm (ii)} $\Rightarrow$ {\rm (iii)}.
\end{proposition}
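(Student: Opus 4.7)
The plan is to observe that both implications are essentially immediate, so the proof should simply make explicit where commutativity, centrality, and Condition (I) are used, without any substantive computation.

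For the implication (i) $\Rightarrow$ (ii), I would just note that if $\A$ is commutative, then every element of $\A$ commutes with every other element of $\A$, and in particular every idempotent is central. One line suffices.

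For the implication (ii) $\Rightarrow$ (iii), I would fix an idempotent $e \in \A$ satisfying the hypothesis $e\A(1-e) = 0$ of Condition (I) (though I would point out that this hypothesis is actually redundant here). Since $e$ is central by assumption (ii), its complement $1-e$ is also central, so for any $a \in \A$ we can write $(1-e)ae = a(1-e)e = a \cdot 0 = 0$. Therefore $(1-e)\A e = 0$, which is the desired conclusion of Condition (I).

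There is no real obstacle in either step; the only thing worth flagging in the write-up is that under (ii) the conclusion $(1-e)\A e = 0$ holds for every idempotent $e$, not merely those satisfying $e\A(1-e)=0$, so Condition (I) is trivially satisfied. This also clarifies that (iii) is genuinely a weaker condition than (ii), which motivates the authors' interest in isolating it as a separate hypothesis.
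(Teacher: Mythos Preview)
Your proposal is correct and matches the paper's approach: the paper simply records the proof as ``Straightforward,'' and the argument you spell out is exactly the elementary verification one has in mind. Your additional remark that under (ii) the hypothesis $e\A(1-e)=0$ is not actually needed is accurate and a nice observation.
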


\begin{proof}
Straightforward.
\end{proof}

Next, we will show that non-degenerate algebras, also called semiprime algebras, satisfy Condition (I). Recall that an algebra $\A$ is said to be {\bf non-degenerate} if, for any $a \in \A$,
$a\A a = 0$ implies $a = 0$.

\begin{proposition} \label{nondegeI}
Every non-degenerate algebra satisfies {\rm Condition (I)}.
\end{proposition}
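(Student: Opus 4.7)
The plan is a one-line computation based on squeezing an arbitrary element of $(1-e)\A e$ against itself through $\A$, exploiting the hypothesis $e\A(1-e) = 0$ to annihilate the middle, and then appealing to non-degeneracy.

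Let $\A$ be non-degenerate and let $e \in \A$ be an idempotent with $e\A(1-e)=0$. To prove $(1-e)\A e = 0$, pick an arbitrary $x \in \A$ and set $a := (1-e)xe$. I would then show directly that $a\A a = 0$: for any $y \in \A$,
\[
a y a \;=\; (1-e)\,x\,e\,y\,(1-e)\,x\,e,
\]
and the factor $e y (1-e)$ sitting in the middle belongs to $e\A(1-e) = 0$, so $aya = 0$. Thus $a\A a = 0$, and non-degeneracy forces $a = 0$. Since $x$ was arbitrary, $(1-e)\A e = 0$, which is Condition (I).

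The argument is short and I do not expect any real obstacle; the main thing to take care of is simply to spell out that $eye(1-e)$-type terms need not vanish on their own, but the placement of the factor $e(\cdot)(1-e)$ in $aya$ is exactly what makes the hypothesis applicable. No further structure on $\A$ (unital, associative) beyond what is already standing in the section is needed.
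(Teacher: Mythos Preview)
Your argument is correct and is essentially the paper's own proof written out elementwise: the paper simply observes that $((1-e)\A e)\A((1-e)\A e)=0$ because the inner factor lands in $e\A(1-e)=0$, and then invokes non-degeneracy. There is no substantive difference.
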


\begin{proof}
Let $\A$ be a non-degenerate algebra and $e\in \A$ an idempotent such that $e\A(1-e) = 0$. Then we have that
$((1 - e) \A e )
\A 
((1 - e) \A e ) = 0$, which implies that $(1 - e) \A e  = 0$, as desired.
\end{proof}

\begin{theorem} \label{(I)}
Let $\T = \Tri(A, M, B)$ be a triangular algebra such that either $A$ or $B$ satisfies {\rm Condition (I)}. Then $\T$ is partible.
\end{theorem}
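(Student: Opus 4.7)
The plan is to derive, from Condition (I), exactly the normalization of the idempotent $\sigma(p)$ needed to invoke Cheung's Theorem 5.3.2. I describe the case where $A$ satisfies Condition (I); the case of $B$ is handled symmetrically at the end.

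Fix an automorphism $\sigma$ of $\T$ and set $e := \sigma(p)$. Since $\sigma$ is an algebra isomorphism and $(1-p)\T p = 0$, the idempotent $e$ satisfies $(1-e)\T e = 0$. Writing $e$ in the Peirce decomposition of $\T$ relative to $p$, say $e = a + m + b$ with $a \in A$, $m \in M$, $b \in B$, the idempotency relation $e^2 = e$ yields $a^2 = a$, $b^2 = b$, and the linking identity $am + mb = m$, while expanding $(1-e)xe = 0$ separately for $x \in A$, $x \in M$, $x \in B$ and reading off each Peirce summand produces the annihilator relations
\[
(1_A - a)Aa = 0, \quad (1_A - a)Am = 0, \quad (1_A - a)Mb = 0, \quad mBb = 0, \quad (1_B - b)Bb = 0.
\]

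I then invoke Condition (I) on the idempotent $1_A - a \in A$: its hypothesis $(1_A - a)A\bigl(1_A - (1_A - a)\bigr) = (1_A - a)Aa = 0$ is the first annihilator identity above, so Condition (I) supplies $aA(1_A - a) = 0$. Combined, the two orthogonality relations place $a$ in $\Z(A)$. Feeding centrality into $(1_A - a)Am = 0$ (evaluated at $1_A$) gives $am = m$, and substituting into the linking identity yields $mb = 0$. At this stage $e = \sigma(p)$ has been brought into the normalized form that Cheung uses in the proof of Theorem 5.3.2 to construct an invertible $z \in \T$ and a corner-preserving automorphism $\bar\sigma$ with $\sigma = \phi_z \circ \bar\sigma$, exhibiting $\sigma$ as partible.

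The main obstacle I anticipate is matching the normalization derived here with the precise hypothesis in Cheung's Theorem 5.3.2---in particular, that the $B$-side identities $(1_B - b)Bb = 0$ and $mBb = 0$, for which no centrality of $b$ is available, are sufficient to enable Cheung's construction of the conjugator on the lower corner. The case in which $B$ satisfies Condition (I) proceeds by the mirror argument: Condition (I) applied to $1_B - b$ places $b$ in $\Z(B)$, the identity $mBb = 0$ collapses to $mb = 0$, whence $am = m$, and Cheung's theorem applies as before.
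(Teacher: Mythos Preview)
Your computations are correct, but the paper's proof is literally the one line ``It follows from \cite[Theorem 5.3.2 (I) or (II)]{Ch1}.'' In other words, Condition (I) as formulated in this paper is crafted so that it matches (or immediately implies) the hypothesis of Cheung's Theorem 5.3.2 directly---no intermediate normalization of $\sigma(p)$ is needed on your side before invoking Cheung.

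What you have done is unpack, correctly, the first step of Cheung's own argument: from $(1-e)\T e = 0$ you extracted the Peirce-component identities, applied Condition (I) to the idempotent $1_A - a$ to force $a \in \Z(A)$, and deduced $am = m$, $mb = 0$. All of this is right and is presumably how Cheung's proof proceeds internally. But since you end by citing Cheung's construction anyway, the net effect is that you have reproduced part of his proof and then appealed to the rest---whereas the paper simply appeals to the whole theorem at once. Your self-identified ``obstacle'' (whether the $B$-side data $(1_B - b)Bb = 0$ and $mBb = 0$ suffice for Cheung's conjugator) is therefore not an obstacle you need to clear: it is Cheung's to handle inside Theorem 5.3.2, and the paper's one-line citation trusts that he does.
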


\begin{proof}
It follows from \cite[Theorem 5.3.2 (I) or (II)]{Ch1}.
\end{proof}

The following are direct consequences of the theorem above.  

\begin{corollary} \label{nondegpartible}
Let $\T = \Tri(A, M, B)$ be a triangular algebra such that either $A$ or $B$ is non-degenerate. Then $\T$ is partible.
\end{corollary}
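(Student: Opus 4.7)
The plan is to derive the corollary as an immediate consequence of the two results just established, so essentially no new work is required beyond chaining them together. First I would observe that by Proposition \ref{nondegeI}, a non-degenerate algebra automatically satisfies Condition (I): the argument there is the standard semiprime trick, noting that if $e$ is an idempotent with $e\A(1-e)=0$, then $(1-e)\A e$ absorbs itself into a zero product via $\A$, and non-degeneracy kills it.

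Given that, the hypothesis of the corollary — that at least one of $A$ or $B$ is non-degenerate — upgrades to the hypothesis of Theorem \ref{(I)}, namely that at least one of $A$ or $B$ satisfies Condition (I). I would then simply invoke Theorem \ref{(I)} to conclude that $\T$ is partible. The entire proof therefore amounts to two sentences: apply Proposition \ref{nondegeI} to whichever of $A$, $B$ is non-degenerate, then apply Theorem \ref{(I)}.

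There is no real obstacle here; the only thing worth being careful about is to state the direction of implication correctly (non-degenerate $\Rightarrow$ Condition (I), not the reverse) and to note that the corollary's ``either $A$ or $B$'' matches the symmetric hypothesis in Theorem \ref{(I)}. I would present it as a one-line proof: ``By Proposition \ref{nondegeI}, $A$ (or $B$) satisfies Condition (I), and the conclusion follows from Theorem \ref{(I)}.''
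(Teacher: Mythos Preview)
Your proposal is correct and matches the paper's approach exactly: the paper states this corollary as a direct consequence of Theorem \ref{(I)}, and the bridge from ``non-degenerate'' to ``Condition (I)'' is precisely Proposition \ref{nondegeI}. Your one-line proof is what the paper intends.
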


\begin{corollary}
Let $\T = \Tri(A, M, B)$ be a triangular algebra such that either $A$ or $B$ is commutative. Then $\T$ is partible.
\end{corollary}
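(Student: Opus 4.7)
The plan is to deduce this as a direct consequence of Theorem \ref{(I)} by verifying that commutativity of $A$ (or of $B$) forces that algebra to satisfy Condition (I). Concretely, I would argue: if $A$ is commutative, then $A$ satisfies Condition (I), whence Theorem \ref{(I)} applies and $\T$ is partible; the argument for $B$ is symmetric.

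The only thing that needs justification is the implication \emph{commutative} $\Rightarrow$ \emph{Condition (I)}, which is recorded (with the label ``straightforward'') in the preceding proposition through the chain (i) $\Rightarrow$ (ii) $\Rightarrow$ (iii). I would give the one-line reason: in a commutative algebra $\A$, for any idempotent $e$ and any $a\in\A$, one has
\[
e\,a\,(1-e) = a\,e(1-e) = 0 \quad \text{and} \quad (1-e)\,a\,e = a\,(1-e)e = 0,
\]
so both sides of the hypothesis and conclusion of Condition (I) vanish automatically. Hence the conditional holds trivially, and $\A$ satisfies Condition (I).

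I do not expect any genuine obstacle: the substantive content is already packaged inside Theorem \ref{(I)}, which itself invokes Cheung's \cite[Theorem 5.3.2]{Ch1}. The corollary is essentially a labeling exercise, chaining two already-established implications.
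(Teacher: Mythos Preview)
Your proposal is correct and matches the paper's approach exactly: the corollary is stated without proof in the paper, being an immediate consequence of Theorem~\ref{(I)} via the implication (i)~$\Rightarrow$~(iii) of the preceding proposition. Your added one-line verification that commutativity forces Condition~(I) is more detail than the paper gives (which just says ``Straightforward''), but the route is identical.
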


\begin{corollary}
Let $\T = \Tri(A, M, B)$ be a triangular algebra such that all the idempotents of $A$ (respectively, $B$) are central. Then $\T$ is partible.
\end{corollary}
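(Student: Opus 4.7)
The plan is simply to chain together two earlier results. The hypothesis is that every idempotent of $A$ (or, symmetrically, every idempotent of $B$) is central. First I would invoke the Proposition that establishes the implications (i) $\Rightarrow$ (ii) $\Rightarrow$ (iii), which in particular gives (ii) $\Rightarrow$ (iii): centrality of all idempotents in an algebra implies that algebra satisfies Condition (I). Applying this to $A$ (respectively $B$) shows that $A$ (respectively $B$) satisfies Condition (I).

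Then I would apply Theorem \ref{(I)}, which tells us that whenever one of the two corners $A$, $B$ of a triangular algebra $\T = \Tri(A,M,B)$ satisfies Condition (I), then $\T$ is partible. Combining these two steps produces the claim.

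There is essentially no obstacle here; the entire content is the composition of the two results already proved. One could, for transparency, also recall why (ii) $\Rightarrow$ (iii): if every idempotent $e$ of $A$ is central, then for any idempotent $e \in A$ with $eA(1-e) = 0$ the condition $(1-e)Ae = 0$ holds automatically, since $(1-e)Ae = (1-e)eA = 0$ by centrality. This makes the implication self-contained in a single line, but it is unnecessary given the Proposition already stated.
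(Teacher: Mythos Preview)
Your proof is correct and matches the paper's approach exactly: the corollary is stated as a direct consequence of Theorem~\ref{(I)}, combined with the implication (ii) $\Rightarrow$ (iii) of the preceding Proposition, which is precisely the chain you describe.
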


The following result provides a sufficient condition for an automorphism to be partible. 

\begin{theorem} \label{key}
Let $\T = \Tri(A, M, B)$ be a triangular algebra and $\sigma$ an automorphism of $\T$ such that $\sigma(M) = M$. Then $\sigma$ is partible.
\end{theorem}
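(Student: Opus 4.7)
My plan is to construct an invertible element $z \in \T$ such that $\phi_z(p) = \sigma(p)$, where $p$ is the idempotent of Notation \ref{notation}. Given such a $z$, the composition $\bar\sigma := \phi_z^{-1}\sigma$ will fix $p$, and hence preserve each of $A = p\T p$, $M = p\T q$ and $B = q\T q$; the factorization $\sigma = \phi_z\bar\sigma$ then exhibits $\sigma$ as partible.

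First I will write $e := \sigma(p) = a + m + b$ with $a \in A$, $m \in M$, $b \in B$, and set $f := \sigma(q) = 1 - e$. Expanding $e^2 = e$ gives that $a$ and $b$ are idempotents and $am + mb = m$. The hypothesis $\sigma(M) = M$ yields the two key identities $e\T f = M$ and $f\T e = \sigma(q\T p) = 0$. I will expand $fxe$ and $exf$ in the Peirce components for an arbitrary $x = a' + m' + b' \in \T$ and separate $A$-, $M$- and $B$-parts; the $A$- and $B$-components will force $aA(1_A - a) = (1_A - a)Aa = 0$ and $bB(1_B - b) = (1_B - b)Bb = 0$, so that $a$ is central in $A$ and $b$ is central in $B$. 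Specializing the $M$-component of $f\T e = 0$ by taking $a' = m' = 0$ and $b' = 1_B$ yields $mb = 0$, and then $am + mb = m$ gives $am = m$.

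The main obstacle will be to upgrade this structural information to the sharp conclusions $a = 1_A$ and $b = 0$; this is where the full strength of $\sigma(M) = M$ together with the left- and right-faithfulness of $M$ come into play. Using centrality of $a$ and $b$ together with $am = m$ and $mb = 0$, a short computation reduces $exf$ to $-a'm + am'b^* + mb'$ with $b^* := 1_B - b$. Each summand lies in $aM$, so the span equality $e\T f = M$ forces $aM = M$; left-faithfulness of $M$ then gives $a = 1_A$. Substituting $a = 1_A$ back into the span condition yields $Am + Mb^* = M$; right-multiplying an arbitrary decomposition $m' = a'm + m''b^*$ by $b$ and invoking $mb = 0$ and $b^*b = 0$ produces $Mb = 0$, whence $b = 0$ by right-faithfulness.

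With $\sigma(p) = p + m$ in hand, I set $z := 1 + m$. Since $m \in M = p\T q$ satisfies $m^2 = 0$, the element $z$ is invertible with $z^{-1} = 1 - m$. A direct calculation gives $z^{-1}pz = p + m = \sigma(p)$, completing the construction of $z$ and hence the proof that $\sigma$ is partible.
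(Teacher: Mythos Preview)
Your argument is correct and self-contained, but it follows a different route from the paper's proof. The paper proceeds abstractly: it first shows (Lemma \ref{lann&rann}) that $\Lann_\T(M)=M\oplus B$ and $\Rann_\T(M)=A\oplus M$; since $\sigma(M)=M$, these annihilators are $\sigma$-invariant, whence $\sigma(A)\subseteq A\oplus M$ and $\sigma(B)\subseteq M\oplus B$, so $\sigma(A)\cap B=0$ and $A\cap\sigma(B)=0$, and the conclusion is then imported from Cheung's thesis \cite[Theorem 5.2.3]{Ch1}. Your approach instead pins down $\sigma(p)$ explicitly: you show $\sigma(p)=p+m$ for some $m\in M$ and then produce the conjugating element $z=1+m$ by hand. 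Your route is longer but fully constructive and avoids the external citation; the paper's route is shorter but leans on Cheung's result as a black box.

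One small slip that does not affect validity: after substituting $a=1_A$, the $M$-component of $exf$ is $-a'm+m'b^*+mb'b^*$, and since $mb'b^*=mb^*b'=mb'$ (using $mb=0$ and centrality of $b$), the span $e\T f$ equals $Am+Mb^*+mB$, not just $Am+Mb^*$. Your right-multiplication-by-$b$ argument still goes through, because $(mB)b=m(bB)=(mb)B=0$ as well, so $Mb=0$ and $b=0$ follows. You may also want to say explicitly that the identities $aA(1_A-a)=0$ and $bB(1_B-b)=0$ come from the vanishing of the $A$- and $B$-components of $e\T f$ (since $e\T f=M\subseteq p\T q$), while $(1_A-a)Aa=0$ and $(1_B-b)Bb=0$ come from $f\T e=0$; together these yield centrality of $a$ and $b$.
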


We first prove a preliminary lemma.

\begin{lemma} \label{lann&rann}
Let $\T = \Tri(A, M, B)$ be a triangular algebra. We write $\Lann_{\T}(M)$ and $\Rann_{\T}(M)$ for the left annihilator of $M$ in $\T$ and 
the right annihilator of $M$ in $\T$,  respectively.
Then the following conditions hold:
\begin{enumerate}
\item[{\rm (i)}] $\Lann_\T (M) = M \oplus B$,
\item[{\rm (ii)}] $\Rann_\T (M) = A \oplus M$.
\end{enumerate}
\end{lemma}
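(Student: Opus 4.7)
The plan is to argue directly from the Peirce decomposition introduced in Notation~\ref{notation} and the faithfulness assumption in force ($L = R = 0$). Any element of $\T$ has a unique expression $x = a + m + b$ with $a \in A$, $m \in M$, $b \in B$ (via the identifications $A \cong p\T p$, $M \cong p\T q$, $B \cong q\T q$), and the orthogonality of $p, q$ forces the multiplication rules $m_1 m_2 = 0$, $b\,m' = 0$, $m' a = 0$ for all $a \in A$, $b \in B$, $m_1, m_2, m' \in M$.

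For part (i), I would take $x = a + m + b \in \T$ and an arbitrary $m' \in M$, and compute
\[
x m' = a m' + m m' + b m' = a m',
\]
using the rules above. Consequently $x M = a M$, and so $x \in \Lann_\T(M)$ if and only if $a M = 0$, i.e. $a \in L$. Since $M$ is faithful as a left $A$-module, $L = 0$, hence $x \in \Lann_\T(M)$ iff $a = 0$, i.e. iff $x \in M \oplus B$. This proves $\Lann_\T(M) = M \oplus B$. The reverse inclusion is immediate from the same multiplication rules.

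Part (ii) is entirely symmetric: for $x = a + m + b$ and $m' \in M$, one has $m' x = m' a + m' m + m' b = m' b$, so $x \in \Rann_\T(M)$ iff $M b = 0$, that is, $b \in R$. Faithfulness of $M$ as a right $B$-module gives $R = 0$, hence $b = 0$ and $x \in A \oplus M$, with the reverse inclusion again immediate.

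There is no real obstacle here; the whole argument is a bookkeeping exercise in the Peirce decomposition, and the only substantive input is the faithfulness hypothesis, which is exactly what converts the condition ``$a \in L$'' (resp. ``$b \in R$'') into ``$a = 0$'' (resp. ``$b = 0$'').
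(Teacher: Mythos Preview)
Your proof is correct and follows essentially the same approach as the paper's own argument: both take $x = a + m + b \in \Lann_\T(M)$, observe that $xM = aM$, and use faithfulness of $M$ as a left $A$-module to force $a = 0$ (with the symmetric argument for (ii)). Your version is just a bit more explicit about the multiplication rules in the Peirce decomposition.
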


\begin{proof}
(i). It is clear that $M \oplus B \subseteq \Lann_\T (M)$. To show the other containment, take an element $x = a + m + b \in \Lann_\T (M)$. From here we get that $aM = 0$, which yields that $a = 0$ since 
we are assuming that $M$ is faithful as a left $A$-module. Hence $x = m + b \in M \oplus B$, which concludes the proof.

(ii) can be proved in a similar way. 
\end{proof}

\noindent {\bf Proof of Theorem \ref{key}.}
Let $\sigma$ be an automorphism of $\T$ such that $\sigma(M) = M$. Then we can find a bijective linear map $\nu_\sigma: M \to M$ such that 
\[
\sigma \left(
\begin{array}{cc}
0 & m  \\
  & 0 
\end{array}
\right) =
\left(
\begin{array}{ccc}
0 && \nu_\sigma(m)  \\
  && 0 
\end{array}
\right),
\]
for all $m \in M$. Next, keeping in mind that the left and right annihilator of an ideal invariant under $\sigma$ are also invariant under $\sigma$, Lemma \ref{lann&rann} applies to get that $\sigma(A \oplus M) \subseteq A \oplus M$ and $\sigma(M \oplus B) \subseteq M \oplus B$. In particular, $\sigma(A) \subseteq A \oplus M$ and $\sigma(B) \subseteq M \oplus B$, which imply that $\sigma(A) \cap B = 0$ and $A \cap \sigma(B) = 0$. The result now follows from \cite[Theorem 5.2.3]{Ch1}.
\qed

\medskip

\subsection{K\"{o}the's conjecture}

Recall that an ideal of an algebra $\A$ is said to be {\bf nil} if all its elements are nilpotent. The {\bf nil radical} $\Nil^*(\A)$ of $\A$, also called the {\bf K\"{o}the upper nil radical}, is defined as the sum of all nil ideals of $\A$. Note that $\Nil^*(\A)$ is the largest nil ideal of $\A$ and invariant under every automorphism of $\A$. In this subsection, we will compute the nil radical of a triangular algebra and construct a family of partible triangular algebras. 

In 1930, K\"{o}the \cite{K} conjectured that if a ring has no nonzero nil ideals, then it can not have any 
nonzero one-sided nil ideals. 
Although K\"{o}the's conjecture has been studied by a significant number of authors in the past few years, it remains an open problem. Here we show that K\"{o}the's conjecture is true for triangular algebras.

\begin{lemma} \label{nilpo}
Let $\T = \Tri(A, M, B)$ be a triangular algebra. An element $x = a + m + b$ of $\T$ is nilpotent if and only if $a$ and $b$ are nilpotent elements of $A$ and $B$, respectively.
\end{lemma}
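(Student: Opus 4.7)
The plan is to reduce everything to a direct matrix computation using the Peirce decomposition notation set up in Notation~\ref{notation}, writing $x = a + m + b$ as the matrix $\bigl(\begin{smallmatrix} a & m \\ & b \end{smallmatrix}\bigr)$ and computing powers.

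For the forward direction, I would appeal to the two projections $\pi_A, \pi_B$ introduced in Subsection~\ref{preli}. These are algebra homomorphisms, so they map nilpotent elements to nilpotent elements. Since $\pi_A(x) = a$ and $\pi_B(x) = b$, if $x^n = 0$ then $a^n = 0$ and $b^n = 0$.

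For the reverse direction, I would first establish by induction on $n$ the explicit formula
\[
x^n = \left(\begin{array}{cc} a^n & \sum_{i+j = n-1} a^i m b^j \\ & b^n \end{array}\right),
\]
where the convention $a^0 = 1_A$, $b^0 = 1_B$ is used. The induction step is routine: multiplying the matrix for $x^{n-1}$ on the right by $x$ gives the $(1,2)$-entry $a^{n-1} m + (\sum_{i+j=n-2} a^i m b^j) b$, which rearranges to $\sum_{i+j = n-1} a^i m b^j$. Once this formula is in hand, suppose $a^k = 0$ and $b^\ell = 0$, and take $n = k + \ell$. Then $a^n = 0$, $b^n = 0$, and for every pair $(i,j)$ with $i + j = n-1 = k + \ell - 1$ one must have either $i \geq k$ or $j \geq \ell$, so $a^i m b^j = 0$ in each summand. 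Hence $x^n = 0$.

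I do not expect any real obstacle here: the only content is the product formula for powers of a $2 \times 2$ upper-triangular matrix over the bimodule $M$, and the pigeonhole argument that $i + j \geq k + \ell - 1$ forces $i \geq k$ or $j \geq \ell$. The minor point to take care of is the bookkeeping with the $(A,B)$-bimodule action on $M$ (so that expressions like $a^i m b^j$ make sense), which is handled cleanly by the identification of $p\T q$ with $M$ explained in Notation~\ref{notation}.
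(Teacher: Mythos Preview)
Your proposal is correct and is exactly the kind of argument the paper has in mind: the paper's own proof is just the single word ``Straightforward,'' and your use of the projections $\pi_A,\pi_B$ for the forward direction together with the explicit power formula and pigeonhole bound $n=k+\ell$ for the converse is the standard way to make this precise.
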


\begin{proof}
Straightforward.
\end{proof}

\begin{proposition}
The K\"{o}the
radical of a triangular algebra $\T = \Tri(A, M, B)$ is the following triangular algebra 
$\, \Nil^*(\T) = \Tri(\Nil^*(A), \, M, \,\Nil^*(B))$.  
\end{proposition}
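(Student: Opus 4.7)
The plan is to prove double containment, writing $I := \Tri(\Nil^*(A), M, \Nil^*(B))$ for the candidate nil radical and arguing both $I \subseteq \Nil^*(\T)$ and $\Nil^*(\T) \subseteq I$. The key input will be Lemma \ref{nilpo}, which reduces nilpotency in $\T$ to nilpotency of the two diagonal components.

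For the containment $I \subseteq \Nil^*(\T)$, I would first check that $I$ is an ideal of $\T$. This is a direct matrix computation: multiplying $\left(\begin{smallmatrix} a & m \\ & b\end{smallmatrix}\right) \in \T$ with $\left(\begin{smallmatrix} a' & m' \\ & b'\end{smallmatrix}\right) \in I$ gives $\left(\begin{smallmatrix} aa' & am' + mb' \\ & bb'\end{smallmatrix}\right)$, and since $\Nil^*(A)$, $\Nil^*(B)$ are ideals the diagonal entries stay in the nil radicals; multiplication on the other side is analogous. Then I would show $I$ is a nil ideal: any element $a + m + b \in I$ has $a \in \Nil^*(A)$ and $b \in \Nil^*(B)$, both of which are nil ideals, so $a$ and $b$ are nilpotent, and Lemma \ref{nilpo} gives that $a + m + b$ is nilpotent. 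Since $\Nil^*(\T)$ is by definition the sum of all nil ideals, it contains $I$.

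For the reverse containment $\Nil^*(\T) \subseteq I$, I would use the projections $\pi_A : \T \to A$ and $\pi_B : \T \to B$, which are surjective algebra homomorphisms. Take $x = a + m + b \in \Nil^*(\T)$. Since $\Nil^*(\T)$ is a nil ideal of $\T$, the element $x$ is nilpotent, so by Lemma \ref{nilpo} both $a = \pi_A(x)$ and $b = \pi_B(x)$ are nilpotent in $A$ and $B$ respectively. Moreover, $\pi_A(\Nil^*(\T))$ is an ideal of $A$ (image of an ideal under a surjective homomorphism) consisting entirely of nilpotent elements, hence is a nil ideal of $A$ and therefore contained in $\Nil^*(A)$; similarly $\pi_B(\Nil^*(\T)) \subseteq \Nil^*(B)$. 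This forces $a \in \Nil^*(A)$ and $b \in \Nil^*(B)$, and since $m \in M$ is automatic, we conclude $x \in I$.

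There is no real obstacle here: the argument is essentially bookkeeping once Lemma \ref{nilpo} is available, the mild subtlety being just to observe that the homomorphic image of a nil ideal is a nil ideal (so one does not need to invoke any deeper facts about the Köthe radical). Combining the two containments yields $\Nil^*(\T) = \Tri(\Nil^*(A), M, \Nil^*(B))$.
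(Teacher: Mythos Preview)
Your proof is correct and follows essentially the same approach as the paper: show that $I$ is a nil ideal (hence $I \subseteq \Nil^*(\T)$), then use the projections $\pi_A$, $\pi_B$ together with Lemma \ref{nilpo} to see that the images are nil ideals of $A$ and $B$, forcing the reverse containment. The paper phrases the second step as showing that \emph{every} nil ideal $J$ of $\T$ satisfies $J \subseteq I$ (so $I$ is the largest nil ideal), but this amounts to the same argument applied with $J = \Nil^*(\T)$.
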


\begin{proof}
It is easy to see that $I:= \Tri(\Nil^*(A), \, M, \,\Nil^*(B))$ is an ideal of $\T$. Moreover, $I$ is a nil ideal of $\T$ by Lemma \ref{nilpo}. We claim that $I$ is the largest nil ideal of $\T$. In fact, let $J$ be a nil ideal of $\T$. Lemma \ref{nilpo} applies to show  
that $\pi_A(J)$ and $\pi_B(J)$ are nil ideals of $A$ and $B$, respectively; this  
implies that $\pi_A(J) \subseteq \Nil^*(A)$ and $\pi_B(J) \subseteq \Nil^*(B)$. Therefore $J \subseteq I$, as desired.
\end{proof}

\begin{corollary} \label{Kothe}
Let $\T = \Tri(A, M, B)$ be a triangular algebra. Assume that $\Nil^*(A) = \Nil^*(B) = 0$. Then the following conditions hold. 
\begin{enumerate}
\item[{\rm (i)}] $\Nil^*(\T) = M$.
\item[{\rm (ii)}] Every nilpotent element of $\T$ belongs to $M$.
\item[{\rm (iii)}] Every left (or right) nil ideal of $\T$ is contained in $M$. 
\end{enumerate} 
\end{corollary}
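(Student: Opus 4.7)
All three assertions cascade from the preceding proposition, which computes $\Nil^*(\T)=\Tri(\Nil^*(A),M,\Nil^*(B))$, combined with Lemma \ref{nilpo}, which controls nilpotency componentwise. Part (i) is a direct substitution: inserting $\Nil^*(A)=\Nil^*(B)=0$ into the formula collapses the outer diagonal blocks, leaving $\Nil^*(\T)=\Tri(0,M,0)$; under the identification of $p\T q$ with $M$ fixed in Notation \ref{notation}, this is exactly the ideal $M\subseteq\T$.

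For part (ii) the plan is to start from a nilpotent $x=a+m+b\in\T$, apply Lemma \ref{nilpo} to deduce that $a\in A$ and $b\in B$ are nilpotent, and then invoke the vanishing of the nil radicals of $A$ and $B$ to conclude $a=0=b$, so that $x=m\in M$. For part (iii), given a left (or right) nil ideal $J$ of $\T$, I would first check that $\pi_A(J)$ is a left (respectively right) ideal of $A$ and that Lemma \ref{nilpo} forces every one of its elements to be nilpotent in $A$; the same analysis applies to $\pi_B(J)$. Once $\pi_A(J)=0$ and $\pi_B(J)=0$ are established, the inclusion $J\subseteq M$ is immediate, and the right-ideal case is handled symmetrically.

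The main obstacle, shared by (ii) and (iii), is the step that upgrades ``$a$ is a nilpotent element of $A$ with $\Nil^*(A)=0$'' to ``$a=0$'', and similarly for a nil one-sided ideal. In full generality the latter is precisely K\"{o}the's conjecture for $A$, and a single nilpotent element need not lie in any nil two-sided ideal, so (ii) is not automatic from $\Nil^*(A)=0$ either. Accordingly I would read the corollary as the reduction of K\"{o}the's conjecture for $\T$ to the same conjecture for $A$ and $B$: it becomes unconditional as soon as $A$ and $B$ belong to a class in which nil one-sided ideals are contained in the upper nil radical (e.g.\ commutative or reduced algebras), which is the implicit setting of the surrounding discussion.
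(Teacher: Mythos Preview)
Your argument for (i) is precisely the paper's intended one: the corollary carries no proof, and (i) is a direct specialization of the preceding Proposition.

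Your caution about (ii) and (iii) is justified, and in fact sharper than the paper. Part (ii), as stated, is simply false: take $A=\M_2(k)$ over a field $k$ (so $\Nil^*(A)=0$ since $A$ is simple) and any $B$ with $\Nil^*(B)=0$; the elementary matrix $e_{12}\in A\subseteq\T$ is nilpotent in $\T$ by Lemma~\ref{nilpo} but does not lie in $M$. So the step ``$a$ nilpotent and $\Nil^*(A)=0$ forces $a=0$'' that you flagged as problematic is genuinely invalid, and no argument along the lines the paper suggests can rescue it. For (iii), your reduction is exactly right: $\pi_A(J)$ is a nil one-sided ideal of $A$, and asking it to vanish under the hypothesis $\Nil^*(A)=0$ is K\"{o}the's conjecture for $A$. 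The paper provides no way around this.

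Your final paragraph, reading the corollary as a \emph{reduction} of K\"{o}the for $\T$ to K\"{o}the for $A$ and $B$, is the honest interpretation; the paper has overstated what follows from the Proposition and Lemma~\ref{nilpo}. Note that the next corollary (``K\"{o}the's conjecture holds for triangular algebras'') is unaffected, since it is vacuous: for $M\neq 0$ one always has $\Nil^*(\T)\supseteq M\neq 0$, so the hypothesis of the conjecture is never satisfied.
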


\begin{corollary}
K\"{o}the's conjecture holds for triangular algebras.
\end{corollary}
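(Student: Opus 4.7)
The plan is to derive this corollary as an essentially vacuous consequence of the preceding Proposition. Recall that K\"othe's conjecture, for a given ring $R$, is the implication: \emph{if $R$ has no nonzero (two-sided) nil ideals, then $R$ has no nonzero one-sided nil ideals.} To verify it for an arbitrary triangular algebra $\T = \Tri(A, M, B)$, it therefore suffices to check that the hypothesis can never hold on this class.

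First I would invoke the preceding Proposition, which identifies the K\"othe upper nil radical as $\Nil^*(\T) = \Tri(\Nil^*(A),\, M,\, \Nil^*(B))$; in particular, $M \subseteq \Nil^*(\T)$. Using the Peirce identification of $M$ with $p\T q$ fixed in Notation \ref{notation}, a direct check shows that $M$ is a two-sided ideal of $\T$ and that $M \cdot M = p\T q \cdot p\T q = 0$ because $qp = 0$, so every element of $M$ squares to $0$ and $M$ is a nil ideal. Since $M \neq 0$ by the very definition of a triangular algebra, this already yields $\Nil^*(\T) \neq 0$, i.e.\ every triangular algebra admits a nonzero two-sided nil ideal. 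Consequently the hypothesis of K\"othe's conjecture is never satisfied on the class of triangular algebras, and the implication holds vacuously.

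A stronger positive version is also immediate and worth recording: under the additional assumption $\Nil^*(A) = \Nil^*(B) = 0$, Corollary \ref{Kothe}(iii) guarantees that every one-sided nil ideal of $\T$ is contained in $M = \Nil^*(\T)$, which is precisely K\"othe's assertion in its positive (non-vacuous) form. There is no genuine obstacle here; the real content has already been extracted in the preceding Proposition together with Lemma \ref{nilpo}, and the corollary is essentially a one-line deduction.
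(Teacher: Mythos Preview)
Your argument is correct and matches the paper's intended reasoning: the paper states this corollary without proof immediately after computing $\Nil^*(\T) = \Tri(\Nil^*(A), M, \Nil^*(B))$, and the point is precisely that $M \neq 0$ forces $\Nil^*(\T) \neq 0$, so the hypothesis of K\"othe's conjecture (in the formulation the paper uses) is never satisfied for a triangular algebra. Your additional remark about the positive version under $\Nil^*(A) = \Nil^*(B) = 0$ is also in line with the paper's Corollary~\ref{Kothe}(iii).
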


\begin{lemma} \label{Kothenondeg}
Any unital algebra with zero K\"{o}the radical is non-degenerate.
\end{lemma}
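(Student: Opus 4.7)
The plan is to unwind the definitions and build a nil (in fact nilpotent) two-sided ideal from any element that witnesses degeneracy, then use unitality to conclude.

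More precisely, suppose $a \in \A$ satisfies $a\A a = 0$; I want to show $a = 0$. The natural object to look at is the two-sided ideal $I := \A a \A$ generated by $a$. For any two generators $x_1 a y_1, \, x_2 a y_2 \in I$, the product is
\[
(x_1 a y_1)(x_2 a y_2) \;=\; x_1 \bigl(a (y_1 x_2) a\bigr) y_2 \;=\; 0,
\]
because $a \A a = 0$. Hence $I^2 = 0$, so $I$ is a nilpotent ideal of $\A$; in particular every element of $I$ squares to $0$, so $I$ is a nil ideal.

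By definition of the Köthe upper nil radical, $I \subseteq \Nil^*(\A) = 0$, so $I = 0$. Now unitality of $\A$ enters: $a = 1 \cdot a \cdot 1 \in \A a \A = I = 0$, giving $a = 0$, as required.

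I do not expect any real obstacle here: the only subtlety worth flagging is that unitality is used in the very last step to guarantee $a \in \A a \A$, and this is precisely the hypothesis available in the lemma. The argument uses nothing beyond the definitions of $\Nil^*(\A)$ as the sum (hence union) of all nil ideals and the elementary fact that a nilpotent ideal is a nil ideal.
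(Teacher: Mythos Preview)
Your proof is correct and is precisely the kind of argument the paper has in mind; the paper's own proof reads only ``Straightforward.'' Your write-up is a clean unfolding of that: build the two-sided ideal $\A a\A$, observe it squares to zero because $a\A a=0$, conclude it is nil and hence contained in $\Nil^*(\A)=0$, and use unitality to recover $a\in\A a\A$. Nothing to add.
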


\begin{proof}
Straightforward. 
\end{proof}

\begin{theorem} \label{aut2}
Let $\T = \Tri(A, M, B)$ be a triangular algebra such that either $\Nil^*(A) = 0$ or $\Nil^*(B) = 0$. Then $\T$ is partible.
\end{theorem}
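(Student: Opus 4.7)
The plan is essentially to chain two results that have already been established in the excerpt, so the proof should be a short one-liner rather than a substantive argument. First I would observe that the hypothesis on the K\"othe radical is designed precisely to feed into Lemma \ref{Kothenondeg}: if $\Nil^*(A) = 0$, then $A$ is non-degenerate; likewise if $\Nil^*(B) = 0$, then $B$ is non-degenerate. So the hypothesis of Theorem \ref{aut2} immediately yields that at least one of the two corner algebras $A$, $B$ of $\T$ is non-degenerate.

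Once non-degeneracy is in hand, Corollary \ref{nondegpartible} applies verbatim and tells us that $\T$ is partible. That completes the argument; no further computation is required, and in particular one does not need to revisit Cheung's criterion \cite[Theorem 5.3.2]{Ch1} directly, since it has already been packaged into Corollary \ref{nondegpartible} via Theorem \ref{(I)} and Proposition \ref{nondegeI}.

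I do not anticipate any obstacle here: the theorem is stated as a corollary-style consequence, and all of the nontrivial work (namely verifying Condition (I) for non-degenerate algebras in Proposition \ref{nondegeI}, and applying Cheung's criterion to obtain Theorem \ref{(I)}) has been done upstream. The only mild point worth remarking on in the write-up is that the hypothesis is phrased as a disjunction, so the proof should simply say that in either case the relevant corner is non-degenerate, and then invoke Corollary \ref{nondegpartible}.
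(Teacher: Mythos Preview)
Your proposal is correct and coincides verbatim with the first proof given in the paper: apply Lemma \ref{Kothenondeg} to obtain that the relevant corner algebra is non-degenerate, and then invoke Corollary \ref{nondegpartible}. The paper also records a second argument, namely to use Corollary \ref{Kothe}(i) to get $\Nil^*(\T) = M$, hence $\sigma(M) = M$ for every automorphism $\sigma$, and conclude via Theorem \ref{key}; note however that this alternative route, as written, requires both $\Nil^*(A) = 0$ and $\Nil^*(B) = 0$, so your version (and the paper's first proof) is the one that actually matches the stated disjunctive hypothesis.
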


\begin{proof}
It follows from Corollary \ref{nondegpartible} by an application of Lemma \ref{Kothenondeg}.
\end{proof}

\begin{proof}
Let $\sigma$ be an automorphism of $\T$. By Corollary \ref{Kothe} (i) we have that $\Nil^*(\T) = M$. In particular, we have that $\sigma(M) = M$. The result now follows from Theorem \ref{key}.
\end{proof}

\begin{remark}
Notice that for example the algebra of matrices $\M_n(R)$ satisfies that $\Nil^*(\M_n(R)) = 0$, and $\M_n(R)$ has many nontrivial idempotents. 
\end{remark}

\section{Bimodule preserving automorphisms}

Let $\T = \Tri(A, M, B)$ be a triangular algebra. Along this section, $M$ needs not to be either faithful as a left $A$-module or as a right $B$-module. Inspired by Theorem \ref{key}, we introduce the following definition: 

\begin{definition}
Let $\T = \Tri(A, M, B)$ be a triangular algebra. We say that an endomorphism $\sigma$ of $\T$ is {\bf $M$-preserving} or {\bf bimodule-preserving} if $\sigma(M) = M$. 
\end{definition}

Our goal in this section is to provide a precise description of bimodule-preserving automorphisms of triangular algebras. 
Our results contribute to the development of the study of automorphism of triangular algebras initiated by Cheung \cite[Chapter 5]{Ch1}.
We first provide a class of triangular algebras such that all its automorphisms are bimodule-preserving.  

\begin{proposition}
Let $\T = \Tri(A, M, B)$ be a triangular algebra such that the algebras $A$ and $B$ satisfy {\rm Condition (I)}. Then every automorphism of $T$ is $M$-preserving. 
\end{proposition}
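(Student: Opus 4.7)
The plan is to analyze the idempotent $e := \sigma(p)$ in Peirce form and exploit the vanishing of $(1-e)\T e$ together with Condition (I) to force the $A$- and $B$-components of $\sigma(M) = e\T(1-e)$ to be zero. Writing $e = a + m + b$ with $a \in A$, $b \in B$, $m \in M$, the relation $e^2 = e$ decomposes Peirce-wise into $a^2 = a$, $b^2 = b$, and $am + mb = m$.

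Next, since $q \T p = 0$ inside $\T$, applying $\sigma$ gives $(1-e)\T e = 0$. Writing a generic element $x = \alpha + \mu + \beta$ and expanding $(1-e)\,x\,e$ via the Peirce multiplication table ($AA \subseteq A$, $AM,\, MB \subseteq M$, $BB \subseteq B$, all other products zero), one reads off that the $A$-component of the result is $(1-a)\alpha a$ and the $B$-component is $(1-b)\beta b$. Letting $\alpha$ range over $A$ and $\beta$ over $B$ forces $(1-a)\,A\,a = 0$ and $(1-b)\,B\,b = 0$.

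The crucial step is to invoke Condition (I) on the \emph{complementary} idempotents: applied to the idempotent $1-a \in A$, the identity $(1-a) A (1 - (1-a)) = (1-a)\,A\,a = 0$ yields $a\,A\,(1-a) = 0$, and the same reasoning inside $B$ gives $b\,B\,(1-b) = 0$. Now I expand $\sigma(M) = e\T(1-e)$ using the same Peirce bookkeeping: its $A$-component lies in $aA(1-a)$ and its $B$-component in $bB(1-b)$, both of which now vanish. Hence $\sigma(M) \subseteq M$, and running the same argument on $\sigma^{-1}$ supplies the reverse inclusion, so $\sigma(M) = M$.

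The one conceptual hurdle — and what I expect to be the main obstacle for a reader — is recognizing that Condition (I) has to be applied to $1-a$ and $1-b$ rather than to $a$ and $b$ themselves, because $(1-e)\T e = 0$ naturally delivers the \emph{opposite} one-sided annihilators from the ones needed to kill the $A$- and $B$-components of $e\T(1-e)$. Once this switch is noticed, the remainder is routine manipulation of Peirce components.
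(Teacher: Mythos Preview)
Your argument is correct and essentially identical to the paper's: both compute $e=\sigma(p)$ in Peirce components, use $(1-e)\T e=0$ to obtain $(1-a)Aa=0$ and $(1-b)Bb=0$, apply Condition~(I) to the complementary idempotents to get $aA(1-a)=0$ and $bB(1-b)=0$, and conclude $\sigma(M)=e\T(1-e)\subseteq M$. Your final step of running the argument on $\sigma^{-1}$ to obtain the reverse inclusion is in fact slightly more careful than the paper, which stops at $\sigma(M)\subseteq M$ and declares $\sigma$ to be $M$-preserving without justifying equality.
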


\begin{proof}
Let $\sigma$ be an automorphism of $\T$ and denote $e = \sigma(p)$ and $f = \sigma(q) = 1 - e$. Then applying $\sigma$ to the identities $pTq = M$ and $qTp = 0$ gives $eTf = \sigma(M)$ and $fTe = 0$, respectively. If $e = \left(\begin{array}{cc}a & m \\0 & b\end{array}\right)$ then $a$ and $b$ are idempotents of $A$ and $B$, respectively. On the other hand, from $0 = fTe = (1 - e)Te$ we get that $(1 - a)Aa = 0$ and $(1 - b)Bb = 0$. Applying that $A$ and $B$ satisfy {\rm Condition (I)} we have that $aA(1 - a) = 0$ and $bB(1 - b) = 0$. But then $\sigma(M) = eTf \subseteq M$ and $\sigma$ is $M$-preserving, as desired. 
\end{proof}

\begin{notation}
{\rm
Let $\T = \Tri(A, M, B)$ be a triangular algebra and $\sigma$ an $M$-preserving endomorphism of $\T$. We write $\sigma$ as follows: 
\[
\sigma \left(\begin{array}{cc}a & m \\0 & b\end{array}\right) = 
\left(\begin{array}{cc} 
\chi_1(a) + \gamma_3(b) & \chi_2(a) + h(m) + \gamma_2(b) \\
&
\\
0 & \chi_3(a) + \gamma_1(b) \end{array}\right),
\]
where $\chi_1: A \to A$, $\chi_2: A \to M$, $\chi_3: A \to B$, $\gamma_1: B \to B$, $\gamma_2: B \to M$, $\gamma_3: B \to A$ are linear maps and 
$h: M \to M$ is an endomorphism of the bimodule $M$.
}
\end{notation}

Our goal is to determine the necessary and sufficient condition on $x_i$, $y_i$ and $h$ to $\sigma$ be $M$-preserving.

\begin{theorem} \label{MpreservingThm0}
The following conditions hold for a bimodule-preserving endomorphism $\sigma$ of a triangular algebra $\T = \Tri(A, M, B)$. 
\begin{enumerate}
\item[\rm (i)] $\chi_1: A \to A$, $\gamma_3: B \to A$ are algebra homomorphisms, and $\Imm (\chi_1)\Imm (\gamma_3) = \Imm (\gamma_3)\Imm (\chi_1) = 0$.
\item[\rm (ii)] $\Imm (\chi_1)$ and $\, \Imm(\gamma_3)$ are ideals of $A$.
\item[\rm (iii)] $A = \Imm (\chi_1) \oplus \Imm (\gamma_3)$.
\item[\rm (iv)] $\chi_3: A \to B$, $\gamma_1: B \to B$ are algebra homomorphisms, and $\Imm (\chi_3)\Imm (\gamma_1) = \Imm (\gamma_1)\Imm (\chi_3) = 0$.
\item[\rm (v)] $\Imm (\chi_3)$ and $\, \Imm(\gamma_1)$ are ideals of $B$.
\item[\rm (vi)] $B = \Imm (\chi_3) \oplus \Imm (\gamma_1)$.
\end{enumerate}
\end{theorem}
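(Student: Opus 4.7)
The plan is to exploit the multiplicativity of $\sigma$ on well-chosen products and read off each equation componentwise in the Peirce decomposition $\T = A \oplus M \oplus B$. First I would apply $\sigma$ to a product $aa'$ with $a, a' \in A$ (embedded in $\T$ via $p$) and expand $\sigma(a)\sigma(a') = (\chi_1(a) + \chi_2(a) + \chi_3(a))(\chi_1(a') + \chi_2(a') + \chi_3(a'))$. Nearly all cross terms vanish because $A \cdot B$, $B \cdot A$, $M \cdot A$, $M \cdot M$, and $B \cdot M$ are zero in $\T$; matching the $A$- and $B$-components of $\sigma(aa') = \sigma(a)\sigma(a')$ gives $\chi_1(aa') = \chi_1(a)\chi_1(a')$ and $\chi_3(aa') = \chi_3(a)\chi_3(a')$. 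The symmetric computation with $bb'$ establishes that $\gamma_1$ and $\gamma_3$ are algebra homomorphisms, proving the homomorphism clauses of (i) and (iv). The orthogonality of images then follows from applying $\sigma$ to the identities $ab = ba = 0$: expanding $\sigma(a)\sigma(b) = 0$ in Peirce components yields $\chi_1(a)\gamma_3(b) = 0$ and $\chi_3(a)\gamma_1(b) = 0$, while $\sigma(b)\sigma(a) = 0$ yields the reversed products.

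Next I would unpack $\sigma(1_\T) = 1_\T$. Setting $e_A := \chi_1(1_A)$ and $e_A' := \gamma_3(1_B)$, this equation forces $e_A + e_A' = 1_A$, and the orthogonality relations just obtained (specialised to $(a,b) = (1_A, 1_B)$) give $e_A e_A' = e_A' e_A = 0$, so $e_A$ and $e_A'$ are orthogonal idempotents of $A$ summing to $1_A$. Plugging $a' = 1_A$ into the multiplicativity of $\chi_1$ (and $b' = 1_B$ into that of $\gamma_3$), together with the corresponding left-handed identities, shows $\chi_1(a) = e_A \chi_1(a) e_A$ and $\gamma_3(b) = e_A' \gamma_3(b) e_A'$, so $\Imm(\chi_1) \subseteq e_A A e_A$ and $\Imm(\gamma_3) \subseteq e_A' A e_A'$. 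The analogous idempotents $f_B := \gamma_1(1_B)$ and $f_B' := \chi_3(1_A)$ in $B$ yield the corresponding containments needed for (v).

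Finally, to upgrade these inclusions to the ideal and direct-sum assertions of (ii), (iii), (v), (vi), I would use surjectivity of $\sigma$. Given $z \in A$, pick $x = a + m + b \in \T$ with $\sigma(x) = z$; matching Peirce components forces $z = \chi_1(a) + \gamma_3(b)$, and conjugating by the pair $\{e_A, e_A'\}$ then yields $e_A z e_A' = e_A' z e_A = 0$ for every $z \in A$, which is exactly the statement that $e_A$ is central in $A$. Once $e_A$ is central, $A = e_A A \oplus e_A' A$ is a direct sum of ideals and the containments above become equalities $\Imm(\chi_1) = e_A A$ and $\Imm(\gamma_3) = e_A' A$; the symmetric argument with $f_B$ in $B$ settles (v) and (vi). I expect the main obstacle to be precisely this centrality step: the earlier parts are clean Peirce-grading bookkeeping, but turning $\Imm(\chi_1) \subseteq e_A A e_A$ into an ideal identity requires ruling out off-diagonal contributions $e_A A e_A'$ and $e_A' A e_A$, and this is where surjectivity genuinely enters.
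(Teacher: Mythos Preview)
Your argument for parts (i) and (iv) is correct and matches the paper's: expand $\sigma$ on products and read off Peirce components. For (ii), (iii), (v), (vi), however, you explicitly invoke surjectivity of $\sigma$, and the theorem is stated only for an $M$-preserving \emph{endomorphism}. This is a genuine gap, and without surjectivity the conclusions actually fail. For instance, take $A = k[x]$, $B = k$, $M = k$ with $A$ acting on $M$ via $p \cdot m = p(0)m$, and let $\sigma$ act by $\chi_1(p) = p(x^2)$, $\gamma_3 = \chi_3 = \chi_2 = \gamma_2 = 0$, $\gamma_1 = \Id_B$, $h = \Id_M$. Then $\sigma$ is a unital $M$-preserving algebra endomorphism, but $\Imm(\chi_1) = k[x^2]$ is not an ideal of $k[x]$, and $\Imm(\chi_1) + \Imm(\gamma_3) = k[x^2] \neq A$.

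That said, the paper's own proof shares exactly this gap: its proof of (ii) opens with the visibly truncated sentence ``Since $\sigma$ is $A = \Imm(\chi_1) + \Imm(\gamma_3)$'', which only makes sense if the missing word is ``surjective'' or ``an automorphism''. So you have correctly located the crux, and your proof is as complete as the paper's. Once surjectivity is granted, the paper's route is a bit more direct than yours: rather than passing through centrality of $e_A = \chi_1(1_A)$, it simply combines the orthogonality from (i) with $A = \Imm(\chi_1) + \Imm(\gamma_3)$ to compute $\Imm(\chi_1) \cdot A = \Imm(\chi_1)\Imm(\chi_1) + \Imm(\chi_1)\Imm(\gamma_3) \subseteq \Imm(\chi_1)$ (and symmetrically on the left), giving the ideal property immediately; trivial intersection then follows from $(\Imm(\chi_1) \cap \Imm(\gamma_3)) \cdot A = 0$. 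Your central-idempotent argument reaches the same destination and yields the extra structural information $\Imm(\chi_1) = e_A A$, at the cost of a few more lines.
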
  

\begin{proof}
(i). Given $x = \left(\begin{array}{cc}a & m \\0 & b\end{array}\right)$ and $x' = \left(\begin{array}{cc}a' & m' \\0 & b'\end{array}\right)$ in $\T$, from $\sigma(xx') = \sigma(x)\sigma(x')$ we get that
\begin{align*}
\chi_1(aa') + \gamma_3(bb') & = \big(\chi_1(a) + \gamma_3(b)\big)\big(\chi_1(a') + \gamma_3(b')\big) = 
\\
& = \chi_1(a)\chi_1(a') + \chi_1(a)\gamma_3(b') + \gamma_3(b)\chi_1(a') + \gamma_3(b)\gamma_3(b'),
\end{align*}
for all $a$, $a' \in A$ and $b$, $b' \in B$. It implies that $\chi_1$ and $\gamma_3$ are algebra homomorphisms and $\Imm (\chi_1)\Imm (\gamma_3) = \Imm (\gamma_3)\Imm (\chi_1) = 0$, proving (i).

\smallskip

(ii). Since $\sigma$ is  $A = \Imm (\chi_1) + \Imm (\gamma_3)$. Then given $a_0 \in A$ and $\chi_1(a)$, $\gamma_3(b)$ for $a \in A$ and $b \in B$. Writing $a_0 = \chi_1(a') + \gamma_3(b')$ for $a' \in A$ and $b'\in B$, we have that  
\begin{align*}
\chi_1(a)a_0 & = \chi_1(a)\chi_1(a') + \chi_1(a)\gamma_3(b') = \chi_1(aa') \in \Imm (\chi_1),
\\
\gamma_3(b) a_0 & = \gamma_3(b)\chi_1(a') + \gamma_3(b)\gamma_3(b') = \gamma_3(bb') \in \Imm (\gamma_3),
\end{align*}
by (i). Similarly, $a_0\chi_1(a) \in  \Imm (\chi_1)$ and $a_0\gamma_3(b) \in \Imm (\gamma_3)$, which concludes the proof of (ii).

\smallskip

(iii). It remains to show that $\Imm (\chi_1)\cap \Imm (\gamma_3) = 0$. From $A = \Imm (\chi_1) + \Imm (\gamma_3)$ we have that 
\begin{align*}
\big(\Imm (\chi_1)\cap \Imm (\gamma_3) \big)A & \subseteq \big(\Imm (\chi_1)\cap \Imm (\gamma_3) \big)\Imm (\chi_1) + \big(\Imm (\chi_1)\cap \Imm (\gamma_3) \big)\Imm (\gamma_3) \subseteq
\\
& \subseteq \Imm (\gamma_3)\Imm (\chi_1) + \Imm (\chi_1)\Imm (\gamma_3) = 0.
\end{align*}
Thus $\big(\Imm (\chi_1)\cap \Imm (\gamma_3) \big)A = 0$, which yields that $\Imm (\chi_1)\cap \Imm (\gamma_3) = 0$.

\smallskip

(iv), (v) and (vi) can be proved in a similar way.
\end{proof}

\begin{theorem} \label{MpreservingThm1}
Let $\T = \Tri(A, M, B)$ be a triangular algebra and $\sigma$ an $M$-preserving endomorphism of $\T$ such that its restriction to $M$ is an automorphism of $M$. Then the following conditions hold. 
\begin{enumerate}
\item[\rm (i)] $\Imm(\gamma_3) \subseteq \Lann_A(M)$ and $\Imm(\chi_3) \subseteq \Rann_B(M)$.
\item[\rm (ii)] $\ker(\chi_1) \subseteq \Lann_A(M)$ and $\ker(\gamma_1) \subseteq \Rann_B(M)$.
\item[\rm (iii)] $\chi_2(aa') = \chi_1(a)\chi_2(a')$, for all $a, \, a' \in A$.
\item[\rm (iv)] $\gamma_2(bb') = \gamma_2(b)\gamma_1(b')$, for all $b, \, b' \in B$.
\item[\rm (v)] $\ker(\chi_1) \subseteq \ker(\chi_2)$ and $\ker(\gamma_1) \subseteq \ker(\gamma_2)$.
\end{enumerate}
\end{theorem}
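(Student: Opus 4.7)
The strategy is to extract all five assertions from the single identity $\sigma(xy) = \sigma(x)\sigma(y)$, specialised to pairs $x,y$ drawn from $A$, $M$, $B$ (viewed as subspaces of $\T$ via the Peirce decomposition of Notation \ref{notation}). The starting point is that $BM = 0$ and $MA = 0$ inside $\T$; applying $\sigma$ to $bm = 0$ and $ma = 0$ and multiplying out the block matrices yields $\gamma_3(b)\,h(m) = 0$ and $h(m)\,\chi_3(a)=0$ for all $a\in A$, $b\in B$, $m\in M$. Since $h=\sigma|_M$ is surjective, $h(M)=M$, and these rewrite as $\Imm(\gamma_3)\,M = 0$ and $M\,\Imm(\chi_3) = 0$, which is (i).

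For (ii) I would expand $\sigma(am)=\sigma(a)\sigma(m)$ and $\sigma(mb)=\sigma(m)\sigma(b)$; the $(1,2)$-entries (with the remaining summands either automatically zero or vanishing by (i)) reduce to
\[
h(am) = \chi_1(a)\,h(m), \qquad h(mb) = h(m)\,\gamma_1(b),
\]
valid for all $a\in A$, $b\in B$, $m\in M$. If $a\in\ker(\chi_1)$ then $h(am)=0$ for every $m$, and the injectivity of $h$ forces $aM = 0$; the argument for $\ker(\gamma_1)$ is symmetric.

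For (iii) and (iv) I would compare the $(1,2)$-blocks of $\sigma(aa')$ with $\sigma(a)\sigma(a')$, and of $\sigma(bb')$ with $\sigma(b)\sigma(b')$, obtaining
\[
\chi_2(aa') = \chi_1(a)\chi_2(a') + \chi_2(a)\chi_3(a'), \qquad \gamma_2(bb') = \gamma_3(b)\gamma_2(b') + \gamma_2(b)\gamma_1(b').
\]
The cross-terms $\chi_2(a)\chi_3(a')$ and $\gamma_3(b)\gamma_2(b')$ both vanish by (i), since $\chi_2(a),\gamma_2(b')\in M$ while $\chi_3(a')\in\Rann_B(M)$ and $\gamma_3(b)\in\Lann_A(M)$; this yields (iii) and (iv). Then (v) follows immediately by setting $a'=1_A$ in (iii) and $b=1_B$ in (iv): one obtains $\chi_2(a)=\chi_1(a)\chi_2(1_A)$ and $\gamma_2(b)=\gamma_2(1_B)\gamma_1(b)$, so any element killed by $\chi_1$ (resp.\ $\gamma_1$) is also killed by $\chi_2$ (resp.\ $\gamma_2$).

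There is no deep obstacle in any single step, but care is needed because the hypothesis that $\sigma|_M$ is an \emph{automorphism} of $M$ is used in two distinct ways—surjectivity of $h$ in (i) and injectivity of $h$ in (ii)—so neither half can be dropped without reworking the argument. The overall discipline is to expand a multiplicativity identity in block form, retain the $(1,2)$-entry (where the only non-trivial information about $\chi_2$ and $\gamma_2$ resides), and appeal to the previously established parts to discard the off-diagonal cross-terms.
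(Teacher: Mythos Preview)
Your proposal is correct and follows essentially the same approach as the paper: both expand the multiplicativity identity $\sigma(xx')=\sigma(x)\sigma(x')$ in block form, read off the relations $h(m)\chi_3(a')=0$, $\gamma_3(b)h(m')=0$, $h(am')=\chi_1(a)h(m')$, $h(mb')=h(m)\gamma_1(b')$, $\chi_2(aa')=\chi_1(a)\chi_2(a')+\chi_2(a)\chi_3(a')$, and $\gamma_2(bb')=\gamma_3(b)\gamma_2(b')+\gamma_2(b)\gamma_1(b')$, and then use surjectivity/injectivity of $h$ together with (i) to obtain (i)--(v) in the same order. The only cosmetic difference is that you specialise $x,x'$ to the individual Peirce components from the outset, whereas the paper writes one general expansion and extracts all the equations at once.
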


\begin{proof}
Let $x = \left(\begin{array}{cc}a & m \\0 & b\end{array}\right)$ and $x' = \left(\begin{array}{cc}a' & m' \\0 & b'\end{array}\right)$ in $\T$. From $\sigma(xx') = \sigma(x)\sigma(x')$ we have that
\begin{align*}
 \chi_2(a & a') + h(am' + mb') + \gamma_2(bb') = \big(\chi_1(a) + \gamma_3(b)\big)\big(\chi_2(a') + h(m') + \gamma_2(b')\big) + 
\\
& + \big(\chi_2(a) + h(m) + \gamma_2(b)\big)\big(\chi_3(a') + \gamma_1(b')\big). 
\end{align*}
It implies that 
\begin{align}
\chi_2(aa') &= \chi_1(a)\chi_2(a') + \chi_2(a)\chi_3(a'), \label{dos}
\\
\gamma_2(bb') & = \gamma_3(b)\gamma_2(b') + \gamma_2(b)\gamma_1(b'), \label{tres}
\\
h(am') & =  \chi_1(a)h(m'), \label{cuatro}
\\
h(mb') & = h(m)\gamma_1(b'), \label{cinco}
\\
0 & = \chi_1(a)\gamma_2(b') + \chi_2(a)\gamma_1(b'),
\\
0 & = \gamma_3(b)\chi_2(a') +  \gamma_2(b)\chi_3(a'),
\\
0 & = h(m)\chi_3(a'), \label{ocho}
\\
0 & = \gamma_3(b)h(m'). \label{nueve}
\end{align}

(i) follows from (\ref{ocho}) and (\ref{nueve}), since $h$ is an automorphism of $M$.

\smallskip  

(ii). The equality $\ker(\chi_1) \subseteq \Lann_A(M)$ can be obtained from (\ref{cuatro}): 
$$0 = a h(m') = h(am'), \quad \forall \, m' \in M,$$ since $h$ is an automorphism of $M$. An application of (\ref{cinco}) proves that $\ker(\gamma_1) \subseteq \Rann_B(M)$. 

\smallskip 

(iii) (respectively, (iv)) follows from (\ref{dos}) (respectively, (\ref{tres})) by an application of $\Imm(\chi_3) \subseteq \Rann_B(M)$ (respectively,  $\Imm(\gamma_3) \subseteq \Lann_A(M)$).

\smallskip 

(v). Making $a' = 1$ in (\ref{tres}) gives that $\chi_2(a) = \chi_1(a)\chi_2(1)$, for all $a \in A$, which implies that  
$\ker(\chi_1) \subseteq \ker(\chi_2)$. Similarly, $\ker(\gamma_1) \subseteq \ker(\gamma_2)$. 
\end{proof}

\begin{remark}
{\rm 
Notice that condition (iii) in theorem above is equivalent to the following one:

(iii)' $\, \, \chi_2(a) = \chi_1(a)\chi_2(1), \, \, \mbox{for all} \, \, a \in A.$

\smallskip 

\noindent It is clear that (iii) implies (iii)' by making $a' = 1$. Conversely, assume that (iii)' holds and take $a, \, a' \in A$, from $\chi_2(aa') = \chi_1(aa')\chi_2(1)$ applying that $\chi_1$ is an algebra homomorphism we have that 
$$\chi_2(aa') = \chi_1(aa')\chi_2(1) = \chi_1(a)\chi_1(a')\chi_2(1) \stackrel{\rm (iii)'}{=}\chi_1(a)\chi_2(a').$$
In a similar way, one can show that condition (iv) is equivalent to the following one:

(iv)' $\, \,\gamma_2(b) = \gamma_2(1)\gamma_1(b), \, \, \mbox{for all} \, \, b \in B.$
}
\end{remark}

\begin{theorem}
Let $\T = \Tri(A, M, B)$ be a triangular algebra and $\sigma$ an $M$-preserving endomorphism of $\T$. Then $\sigma$ is a monomorphism of $\T$ if and only if the following three conditions hold.
\begin{enumerate}
\item[\rm (M1)] $\ker (h) = 0$. 
\item[\rm (M2)] $\ker(\chi_1) \cap \ker(\chi_3) = 0$.
\item[\rm (M3)] $\ker(\gamma_1) \cap \ker(\gamma_3) = 0$.
\end{enumerate} 
\end{theorem}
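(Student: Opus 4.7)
The plan is to analyse the equation $\sigma(a+m+b)=0$ by splitting it into its three Peirce components,
\begin{align*}
\chi_1(a)+\gamma_3(b)&=0 \ \text{in}\ A,\\
\chi_2(a)+h(m)+\gamma_2(b)&=0 \ \text{in}\ M,\\
\chi_3(a)+\gamma_1(b)&=0 \ \text{in}\ B,
\end{align*}
and to show that under (M1)--(M3) these three equations force $a=m=b=0$, while conversely the injectivity of $\sigma$ is used to extract each of (M1), (M2), (M3) by a careful choice of test element.

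For the sufficiency of (M1)--(M3), the first observation is that (M1) together with the already-assumed surjectivity $\sigma(M)=M$ upgrades $h$ to an automorphism of the bimodule $M$, so Theorem~\ref{MpreservingThm1} becomes available. Theorem~\ref{MpreservingThm0}(iii) then asserts $A=\Imm(\chi_1)\oplus\Imm(\gamma_3)$, so the first displayed equation forces $\chi_1(a)=0$ and $\gamma_3(b)=0$; analogously, part (vi) of the same theorem applied to the third equation forces $\chi_3(a)=0$ and $\gamma_1(b)=0$. Hypotheses (M2) and (M3) then give $a=0$ and $b=0$, after which the middle equation collapses to $h(m)=0$, and (M1) yields $m=0$.

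For the necessity direction, (M1) is immediate upon evaluating $\sigma$ at an element of the form $0+m+0$, since $\sigma(m)=h(m)$. For (M2), given $a\in\ker(\chi_1)\cap\ker(\chi_3)$, the image $\sigma(a)=\chi_1(a)+\chi_2(a)+\chi_3(a)=\chi_2(a)$ lies in $M$; exploiting the surjectivity of $h:M\to M$ (from $\sigma(M)=M$), I would choose $m_0\in M$ with $h(m_0)=-\chi_2(a)$ and verify componentwise that $\sigma(a+m_0)=0$. Injectivity of $\sigma$ then yields $a+m_0=0$, whence $a=0$. The argument for (M3) is entirely symmetric, working with some $m_0\in M$ satisfying $h(m_0)=-\gamma_2(b)$ for a given $b\in\ker(\gamma_1)\cap\ker(\gamma_3)$.

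The main obstacle is essentially cosmetic: one has to be careful to invoke Theorem~\ref{MpreservingThm1} only after establishing bijectivity of $h$, which has to be argued separately in each direction (from (M1) together with $\sigma(M)=M$ in the sufficiency argument, and from $\sigma$ being injective together with $M$-preserving in the necessity argument). Apart from that, the whole proof is bookkeeping over the Peirce decomposition $\T=A\oplus M\oplus B$ together with the structural information already packaged in Theorems~\ref{MpreservingThm0} and~\ref{MpreservingThm1}.
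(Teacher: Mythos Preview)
Your proof is correct, but it diverges from the paper's argument in both directions.

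For sufficiency, the paper exploits that $\ker(\sigma)$ is a two-sided ideal: if $x=a+m+b\in\ker(\sigma)$ then $px=a+m\in\ker(\sigma)$ as well, and computing $\sigma(px)$ directly gives $\chi_1(a)=\chi_3(a)=0$ without ever invoking the direct sum $A=\Imm(\chi_1)\oplus\Imm(\gamma_3)$ from Theorem~\ref{MpreservingThm0}. Your route via that decomposition is equally valid, but note that your appeal to Theorem~\ref{MpreservingThm1} is superfluous here: the only structural fact you actually use is Theorem~\ref{MpreservingThm0}(iii),(vi), which holds for any $M$-preserving endomorphism and does not require $h$ to be bijective.

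For necessity of (M2), the paper takes a shorter path: once $\sigma$ is a monomorphism, $h$ is an automorphism of $M$, so Theorem~\ref{MpreservingThm1}(v) gives $\ker(\chi_1)\subseteq\ker(\chi_2)$; hence $a\in\ker(\chi_1)\cap\ker(\chi_3)$ already satisfies $\chi_2(a)=0$, and $\sigma(a)=0$ immediately. Your trick of choosing $m_0$ with $h(m_0)=-\chi_2(a)$ and killing $a+m_0$ works too, and has the minor advantage of using only the surjectivity of $h$ rather than the finer structural statement (v). Both arguments ultimately hinge on the same hypothesis $\sigma(M)=M$.
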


\begin{proof}
Assume first that $\sigma$ is an $M$-preserving monomorphism of $\T$. Then (M1) clearly follows. If $a \in \ker(\chi_1) \cap \ker(\chi_3)$ then $a \in \ker(\chi_2)$ by Theorem \ref{MpreservingThm1} (v). But also
$$\sigma \left(\begin{array}{cc}a & 0  \\0 & 0 \end{array}\right)
= \left(\begin{array}{cc}\chi_1(a) & \chi_2(a)  \\0 & \chi_3(a) \end{array}\right) = 
\left(\begin{array}{cc} 0 & 0  \\0 & 0 \end{array}\right),$$
which implies that $a = 0$. It shows (M2). Similarly, (M3) can be proved.  

Conversely, assume that (M1), (M2) and (M3) hold. If $x = \left(\begin{array}{cc}a & m  \\0 & b \end{array}\right) \in \ker(\sigma)$ then $px = \left(\begin{array}{cc}a & m  \\0 & 0 \end{array}\right) \in \ker(\sigma)$. But 
$\sigma(px) = \left(\begin{array}{cc}\chi_1(a) & \chi_2(a) + h(m) \\0 & \chi_3(a) \end{array}\right)$. Thus $a \in \ker(\chi_1) \cap \ker(\chi_3) \stackrel{\rm (M2)}{=} 0$ and so $a = 0$, which yields that $h(m) = 0$, and by (M1) we get that $m = 0$. Apply that $x \in \ker (\sigma)$ to get that $b \in \ker(\gamma_1) \cap \ker(\gamma_3) \stackrel{\rm (M3)}{=} 0$, which finishes the proof. 
\end{proof}

\begin{theorem}
Let $\T = \Tri(A, M, B)$ be a triangular algebra and $\sigma$ an $M$-preserving endomorphism of $\T$. Then $\sigma$ is an epimorphism of $\T$ if and only if the following two conditions hold.
\begin{enumerate}
\item[\rm (E1)] $h$ is an epimorphism of $M$.
\item[\rm (E2)] $A = \chi_1\big(\ker (\chi_3)\big) \cap \gamma_3 \big(\ker(\gamma_1)\big)$. 
\item[\rm (E3)] $B = \gamma_1 \big(\ker(\gamma_3)\big) \cap \chi_3\big(\ker (\chi_1)\big)$.
\end{enumerate} 
\end{theorem}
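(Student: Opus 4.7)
The plan is to treat the two implications separately. For sufficiency, given any $y = a_0 + m_0 + b_0 \in \T$, condition (E2)---read as the decomposition $A = \chi_1(\ker \chi_3) + \gamma_3(\ker \gamma_1)$, since the intersection of the two summands is forced to be zero by Theorem \ref{MpreservingThm0}(iii)---lets me write $a_0 = \chi_1(c) + \gamma_3(d)$ with $c \in \ker \chi_3$ and $d \in \ker \gamma_1$. Symmetrically (E3) supplies $c' \in \ker \chi_1$ and $d' \in \ker \gamma_3$ with $\chi_3(c') + \gamma_1(d') = b_0$. Setting $a = c + c'$ and $b = d + d'$, the chosen kernel memberships force $\chi_1(a) + \gamma_3(b) = \chi_1(c) + \gamma_3(d) = a_0$ and $\chi_3(a) + \gamma_1(b) = \chi_3(c') + \gamma_1(d') = b_0$. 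Finally (E1) supplies $m \in M$ with $h(m) = m_0 - \chi_2(a) - \gamma_2(b)$, and the block form of $\sigma$ yields $\sigma(a + m + b) = y$.

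For necessity, I would lift test elements. Lifting $(a_0, 0, 0)$ to some $(a, m, b)$ forces $\chi_1(a) + \gamma_3(b) = a_0$ and $\chi_3(a) + \gamma_1(b) = 0$. The direct sum $B = \Imm \chi_3 \oplus \Imm \gamma_1$ of Theorem \ref{MpreservingThm0}(vi) then gives $\chi_3(a) = 0$ and $\gamma_1(b) = 0$, i.e., $a \in \ker \chi_3$ and $b \in \ker \gamma_1$, yielding (E2); (E3) follows symmetrically from Theorem \ref{MpreservingThm0}(iii). For (E1), lifting $(0, m_0, 0)$ to $(a, m, b)$ places $a \in \ker \chi_1 \cap \ker \chi_3$ and $b \in \ker \gamma_1 \cap \ker \gamma_3$, and the remaining equation reads $h(m) + \chi_2(a) + \gamma_2(b) = m_0$; the goal is then to show $\chi_2(a) = \gamma_2(b) = 0$, so that $m_0 \in \Imm h$.

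The main obstacle is this last vanishing. Theorem \ref{MpreservingThm1}(v) gives exactly $\ker \chi_1 \subseteq \ker \chi_2$ and $\ker \gamma_1 \subseteq \ker \gamma_2$, but it is stated under the hypothesis that $\sigma|_M$ is an \emph{automorphism} of $M$, whereas our setting only guarantees $\sigma|_M$ is surjective. The plan is to verify that the relevant parts of Theorem \ref{MpreservingThm1} in fact require only surjectivity of $h$: the inclusion $\Imm \chi_3 \subseteq \Rann_B(M)$ in part (i) is derived from $h(m)\chi_3(a') = 0$ together with $h(M) = M$, and the derivation of (v) proceeds by setting $a' = 1_A$ in the multiplicativity identity for $\chi_2$ and absorbing the stray term $\chi_2(a)\chi_3(1)$ via this inclusion, exploiting that $\sigma(1_\T) = 1_\T$ holds automatically for any surjective endomorphism of a unital algebra. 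Once this is checked, the proof closes.
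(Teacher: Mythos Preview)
Your argument is correct and matches the paper's approach for (E2), (E3), and the converse direction (the paper treats the three Peirce summands separately rather than lifting a general $y$ all at once, but the substance is identical). You also correctly read the $\cap$ in (E2) and (E3) as a typo for $+$ (equivalently $\oplus$, by Theorem~\ref{MpreservingThm0}(iii),(vi)).

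For (E1), however, you are working far harder than necessary. The hypothesis that $\sigma$ is \emph{$M$-preserving} means by definition $\sigma(M) = M$; since $\sigma$ restricted to $M$ is precisely $h$, this already says $h(M) = M$. So (E1) holds for \emph{any} $M$-preserving endomorphism, epimorphism or not---this is why the paper simply writes ``It is clear that (E1) follows.'' Your detour through lifting $(0,m_0,0)$, invoking the kernel inclusions of Theorem~\ref{MpreservingThm1}(v), and then arguing that those inclusions survive under mere surjectivity of $h$ is all valid, but it is addressing a difficulty that was never there.
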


\begin{proof}
Assume first that $\sigma$ is an $M$-preserving epimorphism of $\T$. It is clear that (E1) follows. Given $\left(\begin{array}{cc}a_0 & 0  \\0 & 0 \end{array}\right) \in \T$ we find an element $x = \left(\begin{array}{cc}a & m  \\0 & b \end{array}\right) \in \T$ such that 
\[
\sigma(x) = \left(\begin{array}{cc} 
\chi_1(a) + \gamma_3(b) & \chi_2(a) + h(m) + \gamma_2(b) \\
&
\\
0 & \chi_3(a) + \gamma_1(b) \end{array}\right) = 
\left(\begin{array}{cc}a_0 & 0  \\0 & 0 \end{array}\right),
\]
which implies that 
\begin{align*}
\chi_1(a) + \gamma_3(b) & = a_0,
\\
\chi_2(a) + h(m) + \gamma_2(b)  & = 0, 
\\
\chi_3(a) + \gamma_1(b)  & = 0.
\end{align*}
Theorem \label{MpreservingThm0} (vi) allows us to conclude that $a \in \ker(\chi_3)$ and $b \in \ker(\gamma_1)$. It shows (E2). Similarly, (E3) follows. 

Conversely, assume that (E1), (E2) and (E3) hold. Then given $a_0 \in A$, apply (E2) to write it as $a_0 = \chi_1(a) + \gamma_3(b)$, where $\chi_3(a) = 0$ and $\gamma_1(b) = 0$. Thus:
\[
\left(\begin{array}{cc}a_0 & 0  \\0 & 0 \end{array}\right) = \sigma \left(\begin{array}{cc}a & m  \\0 & b \end{array}\right)
\in \Imm (\sigma), 
\] 
where $m\in M$ is such that $h(m) = -\chi_2(a) - \gamma_2(b)$. It proves that $\left(\begin{array}{cc}A & 0  \\0 & 0 \end{array}\right) \subseteq \Imm (\sigma)$. Similarly, $\left(\begin{array}{cc}0 & 0  \\0 & B \end{array}\right) \subseteq \Imm (\sigma)$. To finish, apply (E1). 
\end{proof}

\begin{theorem}
Let $\T = \Tri(A, M, B)$ be a triangular algebra. An endomorphism $\sigma:T \to T$ is an $M$-preserving automorphism of $\T$ if and only if the associated maps $\chi_i$, $\gamma_j$ and $h$ satisfy the following conditions:
\begin{enumerate}
\item[\rm (i)] $A = \Imm (\chi_1) \oplus \Imm (\gamma_3)$, $B = \Imm (\chi_3) \oplus \Imm (\gamma_1)$.
\item[\rm (ii)] $h$ is an automorphism of $M$.
\item[\rm (iii)] $\chi_1$, $\gamma_1$, $\chi_3$ and $\gamma_3$ are algebra homomorphisms, and 
\begin{align*}
A & = \Imm (\chi_1) \oplus \Imm (\gamma_3) = \chi_1\big(\ker (\chi_3)\big) \cap \gamma_3 \big(\ker(\gamma_1)\big),
\\
B & = \Imm (\chi_3) \oplus \Imm (\gamma_1) = \gamma_1 \big(\ker(\gamma_3)\big) \cap \chi_3\big(\ker (\chi_1)\big),
\end{align*}
where $\Imm(\chi_1)$, $\Imm(\gamma_3)$ are ideals of $A$, and $\Imm(\gamma_1)$, $\Imm(\chi_3)$ are ideals of $B$.
\item[\rm (iv)] $\Imm(\gamma_3) \subseteq \Lann_A(M)$ and $\Imm(\chi_3) \subseteq \Rann_B(M)$.
\item[\rm (v)] $\ker(\chi_1) \subseteq \Lann_A(M)$ and $\ker(\gamma_1) \subseteq \Rann_B(M)$.
\item[\rm (vi)] $\chi_2(a) = \chi_1(a)\chi_2(1), \, \, \mbox{for all} \, \, a \in A.$
\item[\rm (vii)] $\gamma_2(b) = \gamma_2(1)\gamma_1(b), \, \, \mbox{for all} \, \, b \in B.$
\end{enumerate}
\end{theorem}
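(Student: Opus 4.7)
The proof is a synthesis of the four preceding theorems (Theorems \ref{MpreservingThm0}, \ref{MpreservingThm1}, and the monomorphism and epimorphism characterizations of $M$-preserving endomorphisms), and I would organize it as a direct forward-backward argument.

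For the forward direction, assume $\sigma$ is an $M$-preserving automorphism of $\T$. Since $\sigma$ is surjective with $\sigma(M)=M$, the restriction of $\sigma$ to $M$ coincides with $h$ and is bijective, so $h$ is an automorphism of $M$; this is (ii). With $h$ now known to be an automorphism, Theorem \ref{MpreservingThm0} applies and yields (i) together with the algebra-homomorphism and ideal assertions comprising the first part of (iii); Theorem \ref{MpreservingThm1} then delivers (iv) and (v) along with the identities (iii)$'$ and (iv)$'$ from the subsequent Remark, which are precisely (vi) and (vii). The remaining half of (iii), concerning the images $\chi_1(\ker(\chi_3))$ and $\gamma_3(\ker(\gamma_1))$ and their $B$-analogues, is exactly the content of (E2) and (E3) in the epimorphism theorem.

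For the converse, assume (i)--(vii). I would verify the six conditions (M1)--(M3) and (E1)--(E3) of the preceding monomorphism and epimorphism theorems, and then invoke them jointly. Conditions (M1) and (E1) are immediate from (ii). The second decompositions in (iii) supply (E2) and (E3). For (M2), take $a \in \ker(\chi_1)\cap\ker(\chi_3)$: identity (vi) yields $\chi_2(a)=\chi_1(a)\chi_2(1)=0$, so the whole column $\sigma(pap)$ collapses; combined with the fact that (iii) forces $\chi_1|_{\ker(\chi_3)}$ to map onto $\Imm(\chi_1)$ inside the direct-sum decomposition $A=\Imm(\chi_1)\oplus\Imm(\gamma_3)$, this pins down $a=0$. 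The argument for (M3) is entirely symmetric, working with $\gamma_1$, $\gamma_3$ and (vii) in place of $\chi_1$, $\chi_3$ and (vi).

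The main obstacle, I expect, is this last injectivity step: properly teasing (M2) and (M3) out of conditions (iii), (v), (vi), (vii), because these injectivity statements are not among the conditions verbatim. Once that is handled, everything else is administrative bookkeeping across the earlier theorems, and the monomorphism and epimorphism characterizations then combine to conclude that $\sigma$ is an automorphism.
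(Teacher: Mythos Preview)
The paper states this theorem without proof; it is meant as a summary of Theorems~\ref{MpreservingThm0} and~\ref{MpreservingThm1} together with the monomorphism and epimorphism characterizations immediately preceding it. Your forward direction is exactly the intended synthesis and is correct.

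Your converse argument has a genuine gap at precisely the spot you yourself flag. The sketch for (M2) does not work: knowing that $\chi_1|_{\ker(\chi_3)}$ surjects onto $\Imm(\chi_1)$ says nothing about its kernel, so from $a\in\ker(\chi_1)\cap\ker(\chi_3)$ you cannot deduce $a=0$. In fact (M2) and (M3) do \emph{not} follow from conditions (i)--(vii) alone. Here is an explicit obstruction. Take $A=K[x_1,x_2,\dots]$, $B=M=K$, with $A$ acting on $M$ through the augmentation $x_i\mapsto 0$; let $\chi_1$ be the surjective (non-injective) shift $x_i\mapsto x_{i-1}$, $x_1\mapsto 0$, set $\gamma_1$ and $h$ to be the identity on $K$, and take $\chi_2,\chi_3,\gamma_2,\gamma_3$ all zero. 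One checks directly that $\sigma$ is an $M$-preserving endomorphism and that (i)--(vii) all hold, yet $\ker(\chi_1)\cap\ker(\chi_3)=(x_1)\neq 0$, so $\sigma$ is not injective. Note that Section~4 explicitly drops the faithfulness hypothesis on $M$; were $M$ faithful, condition (v) would force $\ker(\chi_1)=\ker(\gamma_1)=0$ and (M2), (M3) would be trivial. Thus either an additional hypothesis (faithfulness of $M$, or (M2) and (M3) themselves) is tacitly intended, or the converse direction as literally stated fails---and no argument deriving (M2) from (i)--(vii) alone can succeed.
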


\begin{remark}
{\rm
Let $\T = \Tri(A, M, B)$ be a triangular algebra and $\sigma$ an automorphism of $\T$. There are two extreme situations:
\begin{enumerate}
\item[\small Case 1.] $\ker(\chi_1) =  \ker(\gamma_1) = 0$.
\item[\small Case 2.] $\ker(\chi_3) =  \ker(\gamma_3) = 0$.
\end{enumerate}
In Case 1, we have that $A = \ker(\chi_3)$ and $B =  \ker(\gamma_3)$, and so $\chi_3 = \gamma_3 = 0$ and $\sigma$ is indeed partible. In Case 2, we get that $A = \ker(\chi_1)$ and $B =  \ker(\gamma_1)$. Then $\chi_1 = \gamma_1 = 0$, which imply that $\chi_2 = \gamma_2 = 0$. Then (\ref{cuatro}) and (\ref{cinco}) (see the proof of Theorem \ref{MpreservingThm1}) 
} yield that $h = 0$. Notice that this situation can only happens provided $M = 0$. In such a case, we will refer to $\T$ as a {\bf diagonal triangular algebra}, while $\sigma$ will be called an {\bf ``anti-partible'' automorphism}. 
\end{remark}

\begin{theorem}
Let $\T = \Tri(A, M, B)$ be a triangular algebra and $\sigma$ an $M$-preserving automorphism of $\T$.
Then there exist ideals $I$ and $J$ of $\T$ such that
\begin{enumerate}
\item[\rm (i)] $\sigma(I) = I$ and $\sigma(J) = J$. 
\item[\rm (ii)] $\T = I \oplus J$.
\item[\rm (iii)] $\sigma|_I$ is a partible automorphism of the triangular algebra $I$.
\item[\rm (iv)] $\sigma|_J$ is an anti-partible automorphism of the diagonal triangular algebra $J$.
\end{enumerate}
\end{theorem}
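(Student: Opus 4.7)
My plan is to construct $I$ and $J$ from the structural decompositions of $\T$ supplied by the preceding theorems. Set $A_1 = \Imm(\chi_1)$, $A_2 = \Imm(\gamma_3)$, $B_1 = \Imm(\gamma_1)$, $B_2 = \Imm(\chi_3)$, and let $e_i, f_i$ be the identity idempotents of the corresponding ideal direct summands (Theorem \ref{MpreservingThm0}). By Theorem \ref{MpreservingThm1}(i), $A_2 \subseteq \Lann_A(M)$ and $B_2 \subseteq \Rann_B(M)$, so $e_1 m = m = m f_1$ and $e_2 m = 0 = m f_2$ for every $m \in M$. This lets me verify that $\epsilon_1 := e_1 + f_1$ and $\epsilon_2 := e_2 + f_2$ are orthogonal central idempotents of $\T$ with $\epsilon_1 + \epsilon_2 = 1$. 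I then define $I := \epsilon_1 \T$ and $J := \epsilon_2 \T$, which are automatically ideals with $\T = I \oplus J$. A componentwise computation identifies $I = A_1 \oplus M \oplus B_1 \cong \Tri(A_1, M, B_1)$ (a triangular algebra with nontrivial bimodule) and $J = A_2 \oplus B_2 \cong \Tri(A_2, 0, B_2)$ (a diagonal triangular algebra).

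The heart of the proof is showing $\sigma(\epsilon_1) = \epsilon_1$. Expand $\sigma(\epsilon_1) = \sigma(e_1) + \sigma(f_1) = [\chi_1(e_1) + \gamma_3(f_1)] + [\chi_2(e_1) + \gamma_2(f_1)] + [\chi_3(e_1) + \gamma_1(f_1)]$. Since $\sigma(\epsilon_1)$ is central in $\T$ its $M$-component must vanish, while the $A$- and $B$-components split along $A_1 \oplus A_2$ and $B_1 \oplus B_2$, so matters reduce to the four identities $\chi_1(e_1) = e_1$, $\gamma_3(f_1) = 0$, $\chi_3(e_1) = 0$, $\gamma_1(f_1) = f_1$. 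From $\sigma(\epsilon_1) M = \sigma(\epsilon_1 M) = \sigma(M) = M$ one gets $\chi_1(e_1)m = m$ for all $m \in M$, so $\chi_1(e_1) - e_1 \in A_1 \cap \Lann_A(M)$. I then exploit the fact that $\sigma^{-1}$ is itself $M$-preserving with its own data $\tilde\chi_i, \tilde\gamma_i$ and decompositions $A = \tilde A_1 \oplus \tilde A_2$, $B = \tilde B_1 \oplus \tilde B_2$. Writing out the $A$- and $B$-components of $\sigma \sigma^{-1} = \sigma^{-1}\sigma = \Id$ on $A$ and on $B$, and forcing each summand in the appropriate direct-sum decomposition to vanish, yields $\chi_3 \circ \tilde\chi_1 = 0$, $\tilde\chi_3 \circ \chi_1 = 0$, $\chi_1 \circ \tilde\gamma_3 = 0$, $\tilde\chi_1 \circ \gamma_3 = 0$ and analogous relations on the $B$-side. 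These identities pin down $\ker(\chi_1) = \Imm(\tilde\gamma_3)$ and $\ker(\tilde\chi_1) = \Imm(\gamma_3)$, and combined with the surjection $\chi_1(\Lann_A(M)) = A_1 \cap \Lann_A(M)$ (obtained from the functional equation $h(am) = \chi_1(a)h(m)$ using the injectivity of $h$) they force $A_1 \cap \Lann_A(M) = 0$. This yields $\chi_1(e_1) = e_1$; the remaining three identities are produced by symmetric arguments on the $B$-side and via the dual composition identities.

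Once $\sigma(\epsilon_1) = \epsilon_1$ is in place, (iii) follows because $I \cong \Tri(A_1, M, B_1)$ now has faithful bimodule on both sides (as $A_1 \cap \Lann_A(M) = 0$ and symmetrically $B_1 \cap \Rann_B(M) = 0$) and $\sigma|_I$ remains $M$-preserving, so Theorem \ref{key} declares $\sigma|_I$ partible. For (iv): $J$ has zero bimodule, so is diagonal; and since $A_2 \subseteq \ker(\chi_1) \cap \ker(\chi_2)$ and $B_2 \subseteq \ker(\gamma_1) \cap \ker(\gamma_2)$, the restriction $\sigma|_J$ is entirely determined by $\chi_3|_{A_2} \colon A_2 \to B_2$ and $\gamma_3|_{B_2} \colon B_2 \to A_2$, which are mutually inverse algebra isomorphisms by $\sigma \sigma^{-1} = \Id$. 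This is exactly the anti-partible picture of the remark preceding the theorem. The main obstacle throughout is the $\sigma$-invariance step, which reduces to showing that the decompositions produced by $\sigma$ and $\sigma^{-1}$ align; the rest of the argument is direct computation or an appeal to Theorem \ref{key}.
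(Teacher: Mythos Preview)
Your argument has a genuine gap at the step where you assert $A_1 \cap \Lann_A(M) = 0$. This is false in general. Take $R$ a field, $A = B = R^3$, $M = R$ with $A$ and $B$ each acting through projection onto the first coordinate (so $\Lann_A(M) = 0 \times R \times R$), and define
\[
\sigma\bigl((a_1,a_2,a_3),\, m,\, (b_1,b_2,b_3)\bigr) \;=\; \bigl((a_1,b_3,a_2),\, m,\, (b_1,b_2,a_3)\bigr).
\]
A direct check shows $\sigma$ is an $M$-preserving automorphism of $\T$. Here $\chi_1(a_1,a_2,a_3) = (a_1,0,a_2)$, so $A_1 = \Imm(\chi_1) = R \times 0 \times R$ and $A_1 \cap \Lann_A(M) = 0 \times 0 \times R \neq 0$. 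Thus your route to $\chi_1(e_1) = e_1$ does not go through, and the faithfulness you need in order to invoke Theorem~\ref{key} on $I$ is unavailable. The identities you extract from $\sigma\sigma^{-1} = \Id$ are correct and give $\ker(\chi_1) = \Imm(\tilde\gamma_3)$, $\ker(\chi_3) = \Imm(\tilde\chi_1)$, etc., but they do not force the decompositions produced by $\sigma$ and by $\sigma^{-1}$ to coincide; in the example above $e_1 = (1,0,1) \neq (1,1,0) = \tilde e_1$.

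More seriously, the gap cannot be repaired, because in this example no decomposition $\T = I \oplus J$ satisfies all four conditions. The $\sigma$-invariant central idempotents of $\T$ are precisely the elements $((\epsilon_1,\epsilon_2,\epsilon_2),0,(\epsilon_1,\epsilon_5,\epsilon_2))$ with $\epsilon_i \in \{0,1\}$, and for every resulting pair $(I,J)$ either $\sigma|_I$ fails to be partible (when $I = \T$, since $\sigma$ carries the coordinate $a_3$ into $B$ and every inner automorphism of the commutative algebra $\T$ fixes the $A$- and $B$-components) or $\sigma|_J$ fails to be anti-partible (its $A_J$-to-$A_J$ component $(0,a_2,a_3) \mapsto (0,0,a_2)$ is nonzero). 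The paper's own one-line proof, which takes $I = \Tri(\ker\chi_3, M, \ker\gamma_3)$ and $J = \Tri(\ker\chi_1, 0, \ker\gamma_1)$, also breaks here: with $\ker\chi_3 = R \times R \times 0$ one has $\sigma((a_1,a_2,0),0,0) = ((a_1,0,a_2),0,0) \notin I$ when $a_2 \neq 0$, so these $I$, $J$ are not $\sigma$-invariant. The obstruction is therefore with the statement itself rather than with your strategy.
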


\begin{proof}
Take $I = \Tri(\ker(\chi_3), M, \ker(\gamma_3))$ and $J = \Tri(\ker(\chi_1), 0, \ker(\gamma_1))$.
\end{proof}

\begin{corollary}
Let $\T = \Tri(A, M, B)$ be a triangular algebra such that the algebras $A$ and $B$ are indecomposable. Then any $M$-preserving automorphism $\sigma$ of $\T$ is either partible or anti-partible. Moreover, $\sigma$ is anti-partible if and only if $\T$ is diagonal. 
\end{corollary}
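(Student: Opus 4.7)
The plan is to invoke the preceding theorem and then exploit the indecomposability hypothesis to force one of the two pieces of the resulting decomposition to be trivial. Applying that theorem to $\sigma$ produces a decomposition $\T = I \oplus J$ into $\sigma$-invariant ideals, with $I = \Tri(\ker\chi_3, M, \ker\gamma_3)$, $J = \Tri(\ker\chi_1, 0, \ker\gamma_1)$, $\sigma|_I$ partible and $\sigma|_J$ anti-partible. Projecting $\T = I \oplus J$ via $\pi_A$ and using that $\chi_1, \chi_3$ are algebra homomorphisms by Theorem \ref{MpreservingThm0} (so their kernels are two-sided ideals of $A$) produces a direct-sum decomposition $A = \ker\chi_3 \oplus \ker\chi_1$ into ideals; symmetrically, $B = \ker\gamma_3 \oplus \ker\gamma_1$. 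Indecomposability of $A$ forces exactly one of $\ker\chi_1, \ker\chi_3$ to vanish, and likewise for $B$.

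This leaves four a priori possibilities. Two of them exactly realize Cases 1 and 2 of the preceding Remark, corresponding to $\sigma$ partible and $\sigma$ anti-partible, respectively. The two mixed possibilities are excluded using Theorem \ref{MpreservingThm1}(ii). Consider, for instance, the case $\chi_3 = 0$ and $\gamma_1 = 0$; then $\ker\gamma_1 = B$ and the inclusion $\ker\gamma_1 \subseteq \Rann_B(M)$ forces $M = M \cdot 1_B = 0$. With $M = 0$, the maps $\chi_2$ and $\gamma_2$ vanish automatically (they take values in $M$), so the explicit formula for $\sigma$ reduces to $\sigma(a) = \chi_1(a) \in A$ and $\sigma(b) = \gamma_3(b) \in A$, giving $\sigma(\T) \subseteq A$, which contradicts surjectivity since $B \neq 0$. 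The other mixed case $\chi_1 = 0$, $\gamma_3 = 0$ is handled symmetrically, interchanging the roles of $A$ and $B$.

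For the moreover clause, the implication ``anti-partible $\Rightarrow$ diagonal'' is exactly the content of the Remark (Case 2 forces $M = 0$). Conversely, when $\T$ is diagonal we have $\T = A \oplus B$ as algebras; because $A$ and $B$ are indecomposable, the only nontrivial central idempotents of $\T$ are $1_A$ and $1_B$, and any automorphism must permute them. Fixing both yields Case 1 (partible); swapping them (which requires $A \cong B$) lands $\sigma$ in Case 2 and hence makes it anti-partible, completing the equivalence.

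The main obstacle is the careful extraction of the ideal decompositions of $A$ and $B$ from the ideal decomposition of $\T$, together with the use of the annihilator inclusions of Theorem \ref{MpreservingThm1}(ii) to rule out the mixed cases; once that bookkeeping is set up, the rest is essentially an application of the dichotomy forced by indecomposability.
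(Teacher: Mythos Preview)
Your argument for the main dichotomy is correct and is the natural unpacking of the preceding theorem (which the paper states without proof). Extracting the ideal decompositions $A=\ker\chi_3\oplus\ker\chi_1$ and $B=\ker\gamma_3\oplus\ker\gamma_1$, invoking indecomposability, and using the inclusion $\ker\gamma_1\subseteq\Rann_B(M)$ from Theorem~\ref{MpreservingThm1}(ii) to kill the mixed cases is exactly the intended route; your surjectivity contradiction for the mixed case is clean.

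There is, however, a genuine gap in your treatment of the ``moreover'' clause. You correctly note that anti-partible forces $M=0$ (this is the content of Case~2 in the Remark). But for the converse you write that when $\T$ is diagonal, $\sigma$ either fixes $1_A,1_B$ (partible) or swaps them (anti-partible), and declare this ``completes the equivalence.'' It does not: what you have shown is only that the dichotomy persists in the diagonal case, not that $\sigma$ must be anti-partible. Indeed the literal converse ``$\T$ diagonal $\Rightarrow$ $\sigma$ anti-partible'' is false --- the identity automorphism of any diagonal $\T$ is partible. The biconditional as written in the paper should be read as the one-sided implication ``$\sigma$ anti-partible $\Rightarrow$ $\T$ diagonal'' (equivalently: if $M\neq 0$ then every $M$-preserving automorphism is partible), which you have already proved. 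Your final paragraph should simply stop after the forward implication, rather than attempting to establish a converse that does not hold for a fixed $\sigma$.
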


\section{$\sigma$-biderivations of partible triangular algebras}

This section is devoted to the study of $\sigma$-biderivations of partible triangular algebras. We start by proving a preliminary result. 

\begin{lemma}
Let $\T = \Tri(A, M, B)$ be a triangular algebra and $\sigma$ a partible automorphism of $\T$. Let $\sigma = \phi_z \bar{\sigma}$, where $\phi_z (x) = z^{-1}xz$, for all $x \in \T$; and $\bar \sigma$ is an automorphism of $\T$ such that $\bar \sigma(A) = A$, $\bar \sigma(M) = M$, $\bar \sigma(B) = B$. Then
\begin{enumerate}
\item[{\rm (i)}] a linear map $d: \T \to \T$ is a $\sigma$-derivation of $\T$ if and only if $\bar d(x) = z d(x)$, for all $x \in \T$, is a $\bar \sigma$-derivation. 
\item[{\rm (ii)}] a bilinear map $D: \T \times \T \to \T$ is a $\sigma$-biderivation of $\T$ if and only if $\bar D (x, y) = z D(x, y)$, for all $x, \, y \in \T$, is a $\bar \sigma$-biderivation.
\end{enumerate} 
\end{lemma}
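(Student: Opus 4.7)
The plan is to verify (i) by a direct calculation using the decomposition $\sigma = \phi_z \bar\sigma$, and then to deduce (ii) by applying (i) to each argument of $D$ separately (treating the other argument as a frozen parameter). The key algebraic observation I would keep at the front of my mind is that $\sigma(x) = z^{-1}\bar\sigma(x) z$, so that $z\cdot \sigma(x) = \bar\sigma(x)\cdot z$. This is the identity that lets left‑multiplication by $z$ convert a $\sigma$-Leibniz rule into a $\bar\sigma$-Leibniz rule.

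For (i), I would start from the defining identity $d(xy) = d(x)y + \sigma(x)d(y)$, multiply on the left by $z$, and rewrite
\[
z\,d(xy) = z\,d(x)\,y + z\sigma(x)d(y) = z\,d(x)\,y + \bar\sigma(x)\,z\,d(y),
\]
which is exactly $\bar d(xy) = \bar d(x)y + \bar\sigma(x)\bar d(y)$. Linearity of $\bar d$ is clear because $z$ acts by left multiplication (an $R$-linear operation on $\T$). For the converse, one notes that left multiplication by $z$ is bijective (since $z$ is invertible), so the same chain of equalities can be read backwards: starting from the $\bar\sigma$-Leibniz rule for $\bar d$, one multiplies by $z^{-1}$ on the left and uses $z^{-1}\bar\sigma(x) = \sigma(x)z^{-1}$ to recover the $\sigma$-Leibniz rule for $d = z^{-1}\bar d$.

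For (ii), I would freeze one variable at a time. Fixing $y\in\T$, the map $x\mapsto D(x,y)$ is a $\sigma$-derivation iff $x\mapsto z\,D(x,y) = \bar D(x,y)$ is a $\bar\sigma$-derivation by part (i). Fixing $x\in\T$, the same reasoning applied to the map $y\mapsto D(x,y)$ shows that it is a $\sigma$-derivation iff $y\mapsto \bar D(x,y)$ is a $\bar\sigma$-derivation. Combining the two yields the equivalence in (ii). Bilinearity of $\bar D$ again follows from bilinearity of $D$ together with the $R$-linearity of left multiplication by $z$.

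I do not anticipate any real obstacle here; the whole content of the lemma is that left multiplication by the invertible element $z$ intertwines the two Leibniz rules, and the only care required is in moving $z$ past $\sigma(x)$ via the formula $z\sigma(x) = \bar\sigma(x)z$. The one tiny pitfall is to remember to run the argument in both directions (and to note that $z$ invertible is needed precisely for the ``only if'' direction), so I would phrase each step as an equivalence rather than an implication to make this automatic.
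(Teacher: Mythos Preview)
Your argument is correct and matches the paper's proof essentially line for line: the paper also left-multiplies the $\sigma$-Leibniz identity by $z$ (using $z\sigma(x)=\bar\sigma(x)z$) to obtain the $\bar\sigma$-Leibniz identity, reverses the computation with $z^{-1}$ for the converse, and then simply says (ii) follows from (i). Your presentation is in fact slightly more explicit about the freezing-one-variable step in (ii), but there is no difference in approach.
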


\begin{proof}
(i) Take $x,\, y, \, z \in \T$, and let us assume first that $d$ is a $\sigma$-derivation. Then we have that 
\begin{align*}
\bar d (xy) & = z d(xy) = z\left(d(x)y + \sigma(x)d(y)\right) = \bar d(x)y + zz^{-1}\bar \sigma(x) z d(y) = 
\\
& = 
\bar d(x)y + \bar \sigma(x)\bar d(y),
\end{align*}
which shows that $\bar d$ is a $\bar \sigma$-derivation. Conversely, suppose that $\bar d$ is a $\bar \sigma$-derivation. Then:
\begin{align*}
d (xy) & = z^{-1} \bar d(xy) = z^{-1}\left(\bar d(x)y + \bar \sigma(x) \bar d(y)\right) = z^{-1}\bar d(x)y + z^{-1}\bar \sigma(x) z z^{-1} \bar d(y) = 
\\
& = 
d(x)y + \sigma(x) d(y),
\end{align*}
as desired. 

(ii) follows from (i). 
\end{proof}

Since our objects of study are $\sigma$-biderivations of partible triangular algebras, the result above allows us to deal with a nicer class of automorphisms. More precisely, we can assume that $\sigma$ satisfies that $\sigma(A) = A$, $\sigma(M) = M$, $\sigma(B) = B$, and therefore it can be written as in \eqref{autom}. In what follows, 
we will make this assumption without further mention. 

\smallskip

The following notion will play an important role for our purposes.

\begin{definition} \cite[Definition 2.3]{YaZh}
Let $\A$ be an algebra and $\sigma$ an automorphism of $\A$. The {\bf $\sigma$-center} of $\A$ is the set $\Z_\sigma (\A)$ given by 
\[
\Z_\sigma (\A) = \{ \lambda \in \A : \sigma(x)\lambda = \lambda x, \, \, \mbox{ for all } \, x \in \A  \}.
\]
A linear map from $\A$ to $\Z_\sigma (\A)$ will be called {\bf $\sigma$-central}.
\end{definition}
 
\begin{lemma} \cite[Lemma 2.5]{YaZh}
Let $\A$ be an algebra and $\sigma$ an automorphism of $\A$. The $\sigma$-center of $\A$ is a subspace of $\A$
which is 
invariant under $\sigma$; in other words: $\sigma(\lambda) \in \Z_\sigma (\A)$ for every $\lambda \in  \Z_\sigma (\A)$.
\end{lemma}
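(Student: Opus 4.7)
The plan is to verify the two assertions of the lemma directly from the definition of $\Z_\sigma(\A)$, using only that $\sigma$ is an algebra automorphism (in particular, that $\sigma$ is a surjective multiplicative homomorphism).

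First I would check that $\Z_\sigma(\A)$ is a subspace. Take $\lambda_1, \lambda_2 \in \Z_\sigma(\A)$ and scalars $\alpha, \beta \in R$. For any $x \in \A$, expanding $\sigma(x)(\alpha\lambda_1+\beta\lambda_2)$ and using the defining relations $\sigma(x)\lambda_i=\lambda_i x$ gives
\[
\sigma(x)(\alpha\lambda_1+\beta\lambda_2) = \alpha\sigma(x)\lambda_1+\beta\sigma(x)\lambda_2 = \alpha\lambda_1 x+\beta\lambda_2 x = (\alpha\lambda_1+\beta\lambda_2)x,
\]
so $\alpha\lambda_1+\beta\lambda_2 \in \Z_\sigma(\A)$. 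This part is purely formal.

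The substantive step is $\sigma$-invariance: given $\lambda \in \Z_\sigma(\A)$, show that $\sigma(x)\sigma(\lambda)=\sigma(\lambda)x$ for every $x \in \A$. The key idea is to exploit the surjectivity of $\sigma$: write $x=\sigma(y)$ with $y=\sigma^{-1}(x)$, and then move the outer factors into a single application of $\sigma$ using multiplicativity. Concretely,
\[
\sigma(x)\sigma(\lambda) = \sigma(\sigma(y))\sigma(\lambda) = \sigma\bigl(\sigma(y)\lambda\bigr) = \sigma(\lambda y) = \sigma(\lambda)\sigma(y) = \sigma(\lambda)x,
\]
where the middle equality uses $\sigma(y)\lambda=\lambda y$ coming from $\lambda \in \Z_\sigma(\A)$ applied to the element $y$.

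I do not expect any real obstacle here; the only thing one has to be careful about is invoking the definition $\sigma(x)\lambda=\lambda x$ at $x=y$ rather than at the original $x$, which is exactly what the substitution $x=\sigma(y)$ is designed to enable. Surjectivity of $\sigma$ is essential at this step, whereas injectivity plays no role, and no additional structure on $\A$ (such as unitality or commutativity) is required.
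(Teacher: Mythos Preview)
Your argument is correct: the subspace verification is routine, and for $\sigma$-invariance the substitution $x=\sigma(y)$ together with multiplicativity of $\sigma$ and the defining relation $\sigma(y)\lambda=\lambda y$ gives exactly $\sigma(x)\sigma(\lambda)=\sigma(\lambda)x$. Note that the paper does not supply its own proof of this lemma; it is merely quoted from \cite[Lemma~2.5]{YaZh}, so there is nothing to compare your approach against.
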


\begin{lemma} \label{lemmacenter}
Let $\T = \Tri(A, M, B)$ be a triangular algebra and $\sigma$ an automorphism of $\T$ such that $\sigma(A) = A$, $\sigma(M) = M$, $\sigma(B) = B$, written as in \eqref{autom}. Then:
\begin{align*}
\Z_\sigma (\T) =
\left\{
\left(
\begin{array}{cc}
a & 0  \\
  &       b 
\end{array}
\right) \in \T: \, a \in \Z_{f_\sigma} (A), \, b \in \Z_{g_\sigma} (B), \, am = \nu_ \sigma(m)b, 
\, \forall \, m \in M 
\right\}.
\end{align*}
Moreover, there exists a unique algebra isomorphism $\eta: \pi_B (\Z_ \sigma (\T)) \to \pi_A (\Z_\sigma(\T))$ such that $\eta(b)m = \nu_\sigma(m)b$, for all $b\in \Z_\sigma (\T)$, $m \in M$. 
\end{lemma}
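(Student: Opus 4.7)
The plan is to unpack the defining condition $\sigma(x)\lambda=\lambda x$ for $\lambda\in\Z_\sigma(\T)$ by testing against carefully chosen $x$, exploiting the Peirce decomposition $\T=A\oplus M\oplus B$ together with the explicit form \eqref{autom} of $\sigma$.

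First, since $\sigma(A)=A$ and $f_\sigma=\sigma|_A$ is an automorphism of $A$, we have $f_\sigma(1_A)=1_A$, and hence $\sigma(p)=p$. For a generic $\lambda=a+m+b$, applying the center condition with $x=p$ yields $p\lambda=\lambda p$, which forces $m=0$. With $\lambda=a+b$ at hand, I would then test with $x=a'\in A$, $x=m'\in M$, and $x=b'\in B$ in turn, using the Peirce relations $Ab=bA=0$, $Ma=bM=0$, $AB=BA=0$ together with the explicit form of $\sigma$, to isolate the $A$-, $M$-, and $B$-components. This reduces $\sigma(x)\lambda=\lambda x$ to the three identities $f_\sigma(a')a=aa'$, $\nu_\sigma(m')b=am'$, and $g_\sigma(b')b=bb'$, which are precisely $a\in\Z_{f_\sigma}(A)$, $am'=\nu_\sigma(m')b$, and $b\in\Z_{g_\sigma}(B)$. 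The converse is a direct component-wise verification.

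For $\eta$, I define $\eta(b)=a$ whenever $a+b\in\Z_\sigma(\T)$. Well-definedness uses the left $A$-faithfulness of $M$: if both $a_1+b$ and $a_2+b$ lie in $\Z_\sigma(\T)$, then $(a_1-a_2)m=\nu_\sigma(m)b-\nu_\sigma(m)b=0$ for all $m\in M$, so $a_1=a_2$. Injectivity is the symmetric argument, combining bijectivity of $\nu_\sigma$ with right $B$-faithfulness of $M$: from $\eta(b_1)=\eta(b_2)=a$ we get $\nu_\sigma(m)(b_1-b_2)=0$ for every $m$, hence $M(b_1-b_2)=0$. Surjectivity onto $\pi_A(\Z_\sigma(\T))$ and linearity are immediate, and uniqueness of $\eta$ follows from the defining relation $\eta(b)m=\nu_\sigma(m)b$ via left faithfulness.

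The step I expect to be the main obstacle is verifying multiplicativity of $\eta$, since $\Z_\sigma(\T)$ is not obviously closed under products. The plan is to show directly that whenever $\lambda_i=a_i+b_i$ lie in $\Z_\sigma(\T)$, the element $a_1a_2+b_1b_2$ again satisfies the three characterizing conditions, whence $\eta(b_1b_2)=a_1a_2=\eta(b_1)\eta(b_2)$ by well-definedness of $\eta$. The mixed condition is the delicate one: starting from $(a_1a_2)m=a_1(\nu_\sigma(m)b_2)$, one reapplies the defining relation to $\nu_\sigma(m)b_2\in M$ and exploits the bimodule identity $\nu_\sigma(m'b)=\nu_\sigma(m')g_\sigma(b)$, matching the resulting iterated $\nu_\sigma$-twists against $\nu_\sigma(m)(b_1b_2)$. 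This interplay between $\sigma$ and the bimodule structure is the most delicate part of the argument.
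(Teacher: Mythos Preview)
The paper's own proof is the single word ``Straightforward,'' and your detailed verification of the description of $\Z_\sigma(\T)$, together with well-definedness, bijectivity, and uniqueness of $\eta$, is exactly what a reader would supply and is correct.

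Your plan for multiplicativity, however, cannot succeed, because the assertion that $\eta$ is an \emph{algebra} isomorphism is false in general. Following your outline: from $(a_1a_2)m = a_1(\nu_\sigma(m)b_2)$, applying the defining relation for $a_1$ to the element $m'=\nu_\sigma(m)b_2\in M$ gives
\[
a_1m'=\nu_\sigma(m')b_1=\nu_\sigma\big(\nu_\sigma(m)b_2\big)b_1=\nu_\sigma^2(m)\,g_\sigma(b_2)\,b_1=\nu_\sigma^2(m)\,b_1b_2
\]
(the last step using $b_1\in\Z_{g_\sigma}(B)$), whereas the target is $\nu_\sigma(m)\,b_1b_2$; the extra application of $\nu_\sigma$ does not cancel. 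For a concrete counterexample take $\T=\T_2(\C)$ with $\sigma$ given by $f_\sigma=g_\sigma=\Id$ and $\nu_\sigma(m)=cm$ for some fixed $c\in\C$ with $c\neq 0,1$. Then $\Z_\sigma(\T)$ consists of the diagonal elements $\left(\begin{smallmatrix}c\beta & 0\\ & \beta\end{smallmatrix}\right)$, so $\eta(\beta)=c\beta$, and $\eta(\beta_1\beta_2)=c\beta_1\beta_2\neq c^2\beta_1\beta_2=\eta(\beta_1)\eta(\beta_2)$. Fortunately the paper never invokes multiplicativity of $\eta$ afterwards---only the $R$-linear bijection together with the identity $\eta(b)m=\nu_\sigma(m)b$---so you should read ``algebra isomorphism'' as ``$R$-linear isomorphism'' and drop the multiplicativity argument entirely.
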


\begin{proof}
Straightforward. 
\end{proof}

Let $\A$ be an algebra and $\sigma$ an automorphism of $\A$. We introduce a new bilinear operation on $\A$:
\begin{equation} \label{newcomm}
[x, y]_\sigma = \sigma(x)y - yx, 
\, \, \mbox{ for all } \, \, x, y \in \A.
\end{equation}
We will call \eqref{newcomm} the {\bf $\sigma$-commutator} of $\A$. Note that
\[
\Z_\sigma (\A) = \{ \lambda \in \A : [x, \lambda]_\sigma = 0 \, \, \mbox{ for all } \, x \in \A  \}.
\]
The next result collects some elementary properties of the $\sigma$-commutator. We omit its proof since it consists of elementary calculations.

\begin{lemma} \label{propertiesnewcomm}
Let $\A$ be an algebra and $\sigma$ an automorphism of $\A$. For $x, y, z \in \A$ it follows that 
\begin{itemize}
\item[{\rm (i)}] $[xy, z]_\sigma = [x, z]_\sigma \, y + \sigma(x)[y, z]_\sigma$,
\item[{\rm (ii)}] $[x, [y, z]_\sigma]_\sigma = [[x, y], z]_\sigma + [y, [x, z]_\sigma]_\sigma$.
\end{itemize}
\end{lemma}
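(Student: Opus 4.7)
The plan is to verify both identities by direct expansion from the definition $[u,v]_\sigma = \sigma(u)v - vu$, using only that $\sigma$ is multiplicative. No machinery beyond elementary bookkeeping is needed.

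For part (i), I would expand the left-hand side as $[xy,z]_\sigma = \sigma(xy)z - z(xy) = \sigma(x)\sigma(y)z - zxy$, using that $\sigma$ is an algebra homomorphism. On the right-hand side,
\[
[x,z]_\sigma\, y + \sigma(x)[y,z]_\sigma = \bigl(\sigma(x)z - zx\bigr)y + \sigma(x)\bigl(\sigma(y)z - zy\bigr).
\]
The two summands produce a $+\sigma(x)zy$ and a $-\sigma(x)zy$ that telescope, and what remains is exactly $\sigma(x)\sigma(y)z - zxy$, matching the left-hand side.

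For part (ii), which is a mixed Jacobi-type identity involving both the ordinary commutator $[x,y] = xy-yx$ and the $\sigma$-commutator, the plan is again to expand everything in terms of the four monomials $\sigma(\cdot)\sigma(\cdot)z$, $\sigma(\cdot)z(\cdot)$, $z(\cdot)(\cdot)$ that can appear. Expanding the left-hand side gives
\[
[x,[y,z]_\sigma]_\sigma = \sigma(x)\sigma(y)z - \sigma(x)zy - \sigma(y)zx + zyx.
\]
For the right-hand side, I would use $\sigma([x,y]) = \sigma(x)\sigma(y) - \sigma(y)\sigma(x)$ to expand $[[x,y],z]_\sigma$ into four terms, and expand $[y,[x,z]_\sigma]_\sigma$ into four further terms. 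Adding them, the two occurrences of $\sigma(y)\sigma(x)z$ cancel, and so do the two occurrences of $zxy$, leaving precisely the four monomials above.

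The only subtle point is the correct use of the multiplicativity of $\sigma$ when handling $\sigma([x,y])$, which is automatic since $\sigma$ is an automorphism; no further obstacle arises. This justifies the paper's remark that the verification is entirely routine.
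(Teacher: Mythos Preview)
Your proposal is correct and is precisely the elementary direct expansion the paper has in mind; indeed the paper omits the proof entirely, noting that it consists of elementary calculations. The only minor quibble is your phrase ``the four monomials $\sigma(\cdot)\sigma(\cdot)z$, $\sigma(\cdot)z(\cdot)$, $z(\cdot)(\cdot)$ that can appear,'' which lists three patterns rather than four, but this is cosmetic and the actual computation you carry out is accurate.
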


Set $x_0 \in \A$. Notice that Lemma \ref{propertiesnewcomm} (i) says that the map $\delta_{x_0}: \A \to \A$ given by 
\begin{equation} \label{sigmainnerder}
\delta_{x_0} (x) = [x, x_0]_\sigma, \quad 
\forall \, x \in \A
\end{equation}
is a $\sigma$-derivation of $\A$. We will refer to \eqref{sigmainnerder} as an {\bf inner $\sigma$-derivation}.

We continue by investigating what should be the suitable generalization of an inner biderivation in the $\sigma$-maps setting. 

\begin{proposition}
Let $\A$ be a noncommutative algebra, $\sigma$ an automorphism of $\A$ and $\lambda \in \Z_\sigma(\A)$. Then the map $\Delta_\lambda: \A \times \A \to \A$ given by
\begin{equation} \label{innbi}
\Delta_\lambda(x, y) = \lambda[x, y], \quad 
\forall \, x, y \in \A
\end{equation}
is a $\sigma$-biderivation of $\A$.
\end{proposition}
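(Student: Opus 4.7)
The plan is to verify directly that $\Delta_\lambda$ is a $\sigma$-derivation in each of its two arguments, using only the defining property $\sigma(x)\lambda = \lambda x$ of elements $\lambda \in \Z_\sigma(\A)$. The calculation is symmetric enough that it suffices to carry out one of the two identities in detail and then observe that the other follows by an analogous manipulation.

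First I would fix $x,y,z \in \A$ and expand
\[
\Delta_\lambda(xy,z) \;=\; \lambda[xy,z] \;=\; \lambda xyz - \lambda zxy.
\]
On the other side, I would expand
\[
\Delta_\lambda(x,z)\,y + \sigma(x)\Delta_\lambda(y,z) \;=\; \lambda(xz-zx)y + \sigma(x)\lambda(yz-zy).
\]
The crucial step is to apply $\sigma(x)\lambda = \lambda x$ to the mixed term, turning $\sigma(x)\lambda(yz-zy)$ into $\lambda x(yz-zy) = \lambda xyz - \lambda xzy$. After this substitution, the term $\lambda xzy$ cancels against $-\lambda xzy$ coming from $\lambda(xz-zx)y$, and what remains is exactly $\lambda xyz - \lambda zxy$, matching $\Delta_\lambda(xy,z)$. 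This proves that $\Delta_\lambda$ is a $\sigma$-derivation in the first argument.

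For the second argument, the plan is to expand $\Delta_\lambda(x,yz) = \lambda xyz - \lambda yzx$ and compare with $\Delta_\lambda(x,y)z + \sigma(y)\Delta_\lambda(x,z)$. Again the identity $\sigma(y)\lambda = \lambda y$ converts $\sigma(y)\lambda(xz-zx)$ into $\lambda yxz - \lambda yzx$, and the $\lambda yxz$ terms cancel against the $-\lambda yxz$ coming from $\lambda(xy-yx)z$, leaving the desired $\lambda xyz - \lambda yzx$. So $\Delta_\lambda$ is a $\sigma$-derivation in each slot, hence a $\sigma$-biderivation.

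There is no real obstacle here: the whole argument is an unwinding of the definition of $\Z_\sigma(\A)$, chosen precisely so that the mixed $\sigma(x)\lambda$ and $\sigma(y)\lambda$ terms produced by the $\sigma$-Leibniz rule collapse back to $\lambda x$ and $\lambda y$, restoring the commutator structure. The only thing worth flagging in the write-up is that the noncommutativity hypothesis on $\A$ is not actually used to verify the identities themselves; it is only there to guarantee that $\Delta_\lambda$ can be nonzero, and so that the terminology "inner $\sigma$-biderivation'' introduced afterwards is not vacuous.
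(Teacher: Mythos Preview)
Your argument is correct and follows essentially the same route as the paper: both proofs verify the $\sigma$-Leibniz identity in each argument by direct expansion, using the defining relation $\sigma(x)\lambda = \lambda x$ to convert the $\sigma(x)\Delta_\lambda(y,z)$ term into one involving $\lambda x$, after which the cross terms cancel. Your additional remark that the noncommutativity hypothesis plays no role in the verification itself is accurate and worth keeping.
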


\begin{proof}
Take $x, y, zÊ\in \A$. 
\begin{align*}
\Delta_\lambda(x, z)y + \sigma(x) \Delta_\lambda(y, z) & = \lambda xzy - \lambda zxy + \sigma(x) \lambda yz - \sigma(x) \lambda zy = 
\\
& = \lambda xzy - \lambda zxy + \lambda xyz- \lambda xzy = \lambda xyz - \lambda zxy,
\end{align*}
since
$\lambda \in \Z_\sigma(\A)$. On the other hand, we have that 
\[
\Delta_\lambda(xy, z) = \lambda xyz - \lambda zxy.
\]
Therefore, $\Delta_\lambda(xy, z) = \Delta_\lambda(x, z)y + \sigma(x) \Delta_\lambda(y, z)$. Similarly, one can show that $\Delta_\lambda(x, yz) = \Delta_\lambda(x, y)z + \sigma(y) \Delta_\lambda(x, z)$, concluding the proof.
\end{proof}

The result above makes it possible to introduce the following concept.

\begin{definition} \label{sigmainn}
Let $\A$ be a noncommutative algebra and $\sigma$ an automorphism of $\A$. Maps of the form \eqref{innbi} will be called {\bf inner $\sigma$-biderivations}.
\end{definition}

One of our main results in this section is the following theorem. Of course, any inner  $\sigma$-biderivation satisfies 
$\Delta_{\lambda}(x,x) =0$, for every $x \in \A$. For a triangular algebra and the idempotent $p$ defined in Notation (\ref{notation}), the theorem provides sufficient conditions to ensure that the $\sigma$-biderivations vanishing at $(p, p)$ are indeed inner. At this point, a natural question arises:

\begin{quest} \label{question1}
{\rm What can be said about the $\sigma$-biderivations of $\T$ which do not vanish at $(p, p)$?}
\end{quest}

This question will be treated in Theorem \ref{ext0} below.

\begin{theorem} \label{innercond}
Let $\T = \Tri(A, M, B)$ be a partible triangular algebra and $\sigma$ an automorphism of $\T$. Suppose that $\T$ satisfies the following conditions:
\begin{enumerate}
\item[{\rm (i)}] $\pi_A (\Z_\sigma (\T)) = \Z_{f_\sigma} (A)$ and $\, \pi_B (\Z_\sigma (\T)) = \Z_{g_\sigma} (B)$.
\item[{\rm (ii)}] Either $A$ or $B$ is noncommutative.
\item[{\rm (iii)}] If $\lambda \in \Z_\sigma(\T)$ satisfies $\lambda x = 0$, for some nonzero element $x$ in $\T$, then $\lambda = 0$.
\item[{\rm (iv)}] Every linear map $\xi:M \to M$ satisfying $\xi(amb) = f_\sigma(a)\xi(m)b$, for all $a\in A$, $m\in M$, $b \in B$, can be expressed as $\xi(m) = \lambda_0 m + \nu_\sigma(m) \mu_0$, for all $m\in M$ and certain $\lambda_0 \in \Z_{f_\sigma} (A)$ and $\mu_0 \in \Z_{g_\sigma} (B)$.
\end{enumerate}
Then every $\sigma$-biderivation $D$ of $\T$ such that $D(p, p) = 0$ is an inner $\sigma$-biderivation.
\end{theorem}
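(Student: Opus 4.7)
The plan is to exhibit a unique $\lambda \in \Z_\sigma(\T)$ such that $D = \Delta_\lambda$, working entirely in the Peirce decomposition with respect to $p$.

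I would first use partibility together with the lemma at the start of this section, which relates $\sigma$-biderivations under conjugation of $\sigma$ by an inner automorphism, to reduce to the case $\sigma(A) = A$, $\sigma(M) = M$, $\sigma(B) = B$; in particular $\sigma(p) = p$ and $\sigma(q) = q$. Plugging $D(p,p) = 0$ repeatedly into the two $\sigma$-derivation identities with the orthogonal idempotents $p, q$ and with the factorisations $a = ap$, $b = bq$ should collapse $D$ on pure idempotent/diagonal inputs: one obtains $D(p,q) = D(q,p) = D(q,q) = 0$, then $D(p, a) = D(a, p) = D(q, a) = D(a, q) = 0$ for all $a \in A$ (and symmetrically for all $b \in B$), and finally $D(a, b) = D(b, a) = 0$. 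This pins down the Peirce structure $D(A, A) \subseteq A$, $D(B, B) \subseteq B$, $D(A, B) = D(B, A) = 0$, while every remaining cross-block lands in $M$.

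Next I would build $\lambda$ from $\xi(m) := D(p, m) \in M$. The vanishings above, together with the $\sigma$-derivation identity applied in each argument, give $\xi(amb) = f_\sigma(a)\,\xi(m)\,b$ for all $a \in A$, $m \in M$, $b \in B$, so hypothesis (iv) produces a decomposition $\xi(m) = \lambda_A m + \nu_\sigma(m)\,\mu_B$ with $\lambda_A \in \Z_{f_\sigma}(A)$ and $\mu_B \in \Z_{g_\sigma}(B)$. Hypothesis (i) together with Lemma \ref{lemmacenter} lifts each summand into $\Z_\sigma(\T)$: $\lambda_A$ extends to some $\widetilde{\lambda}_1 \in \Z_\sigma(\T)$ acting on $M$ as $\widetilde{\lambda}_1 m = \lambda_A m$, and $\mu_B$ extends to some $\widetilde{\lambda}_2 \in \Z_\sigma(\T)$ acting as $\widetilde{\lambda}_2 m = \nu_\sigma(m)\,\mu_B$. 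Setting $\lambda := \widetilde{\lambda}_1 + \widetilde{\lambda}_2 \in \Z_\sigma(\T)$ yields $\xi(m) = \lambda m$ for every $m$, and hypothesis (iii) forces uniqueness.

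For the verification $D = \Delta_\lambda$ I would proceed block by block. The mixed blocks are quick: $D(a, m) = D(ap, m) = f_\sigma(a)\,\xi(m) = \lambda a m = \Delta_\lambda(a, m)$, and from $D(1_\T, m) = 0$ (extracted by applying a derivation identity to $1_\T^2 = 1_\T$ and cancelling) one gets $D(q, m) = -\xi(m) = \Delta_\lambda(q, m)$; symmetric manipulations in the second argument and on the $B$-side cover $D(M, A)$, $D(M, B)$, $D(B, M)$. For the $(A, A)$-block, computing $D(a_1, a_2 m)$ in the two available ways produces $\bigl(D(a_1, a_2) - \lambda[a_1, a_2]\bigr)m = 0$ for every $m \in M$, and left-faithfulness of $M$ forces $D(a_1, a_2) = \lambda[a_1, a_2] = \Delta_\lambda(a_1, a_2)$; the $(B, B)$-block is dual via right-faithfulness. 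For the remaining $(M, M)$-block I would first eliminate the $B$-valued remainder that can a priori appear in the Peirce expansions of $D(m, b)$ and $D(b, m)$, by substituting $m_1 m_2 = 0$ into the derivation identities and invoking right-faithfulness of $M$; then comparing the two evaluations of $D(a m_1, m_2)$ (respectively $D(m_1, m_2 b)$) yields the commutator identities $f_\sigma([a, a'])\,D(m_1, m_2) = 0$ and $D(m_1, m_2)\,[b_1, b_2] = 0$, and hypothesis (ii) combined with (iii) and the faithfulness of $M$ forces $D(m_1, m_2) = 0 = \Delta_\lambda(m_1, m_2)$.

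The principal obstacle is the $(M, M)$-block: to reach the commutator identities one has to first remove the $B$-summand lurking in the Peirce decomposition of $D(m, b)$ and $D(b, m)$, and only after that does hypothesis (ii) enter decisively. Everything else is bookkeeping driven by the Peirce decomposition and the central lifting supplied by hypotheses (i), (iii), (iv).
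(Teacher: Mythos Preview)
Your overall architecture matches the paper's: reduce via partibility so that $\sigma$ preserves $A,M,B$; kill $D$ on diagonal/idempotent inputs; build $\lambda\in\Z_\sigma(\T)$ from the map $\xi(m)=D(p,m)$ using hypotheses (i) and (iv); then verify $D=\Delta_\lambda$ Peirce-block by Peirce-block. The $(A,A)$, $(B,B)$ and mixed $(A,M)$, $(M,B)$ arguments are fine and are essentially those of the paper.

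The gap is in the $(M,M)$-block. From Lemma~\ref{aux}\,(i) (not from ``two evaluations of $D(am_1,m_2)$'', which only gives $D(m_1,am_2)=f_\sigma(a)D(m_1,m_2)$ and no commutator) you indeed get
\[
f_\sigma([a,a'])\,D(m_1,m_2)=0 \quad\text{and}\quad D(m_1,m_2)\,[b,b']=0
\]
for all $a,a'\in A$, $b,b'\in B$. But these identities do \emph{not} force $D(m_1,m_2)=0$ under the stated hypotheses: neither $[a,a']$ nor $D(m_1,m_2)$ lies in $\Z_\sigma(\T)$, so condition~(iii) does not apply, and faithfulness of $M$ says $cM=0\Rightarrow c=0$, not $c\,m=0\Rightarrow m=0$. (Concretely, if $A=\T_2(k)$ then $[A,A]=k e_{12}$ and $e_{12}e_1=0$ with $e_1\ne 0$.) What is missing is exactly a second use of hypothesis~(iv): as in the paper, fix $m_0$ and show that $\xi_{m_0}(m):=D(m,m_0)$ satisfies $\xi_{m_0}(amb)=f_\sigma(a)\xi_{m_0}(m)b$ (this follows from the derivation identities once you know $D(A,M),D(B,M)\subseteq M$), so that (iv) and (i) produce $\lambda_{m_0}\in\pi_A(\Z_\sigma(\T))$ with $D(m,m_0)=\lambda_{m_0}m$. \emph{Now} the commutator identity gives $\lambda_{m_0}[a,a']\,M=0$, faithfulness gives $\lambda_{m_0}[a,a']=0$, and since $\lambda_{m_0}$ lifts to $\Z_\sigma(\T)$ and $[a,a']\neq 0$ for some $a,a'$ by~(ii), condition~(iii) yields $\lambda_{m_0}=0$, hence $D(M,M)=0$.
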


\begin{remark}
The results of the present section generalize the results of \cite[Section 4]{Bk2} (see Section \ref{comments} for details). Theorem \ref{innercond} is the analogue of \cite[Theorem 4.11]{Bk2} for $\sigma$-maps. (See also \cite[Remark 4.12]{Bk2}.) Note that Condition (iv) could be replaced by the following condition:

(iv)$'$ Every $\sigma$-derivation of $\T$ is inner.  

\noindent The $\sigma$-derivations of triangular algebras have been already studied by Han and Wei \cite{HW}; 
however, the innerness of $\sigma$-derivations has not been treated yet. It is not difficult to prove that a $\sigma$-derivation of $\T$ is inner if and only if its associated linear map $\xi: M\to M$ 
(see \cite[Theorem 3.12]{HW} for a precise description of $\sigma$-derivations) is of the following form:
\[
\xi(m) = \lambda_0 m + \nu_\sigma(m) \mu_0, \quad \forall \, m \in M, 
\] 
for certain $\lambda_0 \in \Z_{f_\sigma} (A)$ and $\mu_0 \in \Z_{g_\sigma} (B)$. From 
this, one can easily derive that Condition (iv)$'$ above implies Condition (iv) in Theorem \ref{innercond}.
\end{remark}

In what follows, we will collect the appropriate results and develop the machinery needed to prove Theorem \ref{innercond}.

\begin{lemma} {\rm (See \cite[Lemma 2.3]{Br2} and \cite[Lemma 4.2]{Bk2})} \label{aux}
Let $\A$ be an algebra, $\sigma$ an automorphism of $\A$,
and $D$ a $\sigma$-biderivation of $\A$. Then 
\begin{enumerate}
\item[{\rm (i)}] $D(x, y)[u, v] =Ê\sigma([x, y])D(u, v) = [\sigma(x), \sigma(y)]D(u, v)$, for all $x, y, u, v \in \A$.
\item[{\rm (ii)}] $D(x, 1) = D(1, x) = D(x, 0) = D(0, x) = 0$, for all $x \in \A$.
\item[{\rm (iii)}] For an idempotent $e$ of $\A$, it follows that 

$D(e, e) = - D(e, 1 -e) = -D(1 - e, e) = D(1 - e, 1 - e)$.
\end{enumerate}
\end{lemma}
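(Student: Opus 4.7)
The plan is to treat the three assertions in order: (i) does the heavy lifting and (ii)--(iii) fall out quickly from bilinearity together with the $\sigma$-derivation identity evaluated at the unit and at the idempotent decomposition $1 = e + (1-e)$.

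For (i), I would compute $D(xy, uv)$ in two different ways. Expanding the $\sigma$-derivation property in the first argument and then in the second yields
\[
D(xy, uv) = D(x,u)vy + \sigma(u)D(x,v)y + \sigma(x)D(y,u)v + \sigma(x)\sigma(u)D(y,v),
\]
while expanding first in the second argument and then in the first yields
\[
D(xy, uv) = D(x,u)yv + \sigma(x)D(y,u)v + \sigma(u)D(x,v)y + \sigma(u)\sigma(x)D(y,v).
\]
Subtracting the two expressions cancels the mixed terms and leaves $D(x,u)[y,v] = [\sigma(x),\sigma(u)]D(y,v)$ for all $x, y, u, v \in \A$. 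Renaming the variables ($u \leftrightarrow y$) produces the first equality of (i), and the second equality holds because $\sigma$, being an algebra automorphism, satisfies $[\sigma(x),\sigma(y)] = \sigma([x,y])$.

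For (ii), I would exploit $1 \cdot x = x$. Since $\sigma(1) = 1$, the $\sigma$-derivation property in the first argument gives $D(x,y) = D(1\cdot x, y) = D(1,y)x + \sigma(1)D(x,y) = D(1,y)x + D(x,y)$, whence $D(1,y)x = 0$ for every $x \in \A$; specializing $x = 1$ yields $D(1,y) = 0$. The identity $D(x,1) = 0$ is obtained symmetrically, and $D(x,0) = D(0,x) = 0$ is immediate from bilinearity.

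For (iii), I would plug the decomposition $1 = e + (1-e)$ into (ii). The vanishing of $D(1,e)$ gives $D(e,e) + D(1-e,e) = 0$, hence $D(1-e,e) = -D(e,e)$; symmetrically, $D(e,1)=0$ yields $D(e,1-e) = -D(e,e)$. Feeding the first of these into $0 = D(1-e,1) = D(1-e,e) + D(1-e,1-e)$ forces $D(1-e,1-e) = D(e,e)$, which completes the chain of equalities. I do not anticipate a genuine obstacle; the only delicate point is matching left versus right multiplications and tracking the order of $\sigma(x)$ and $\sigma(u)$ in the two expansions of $D(xy,uv)$, and once those line up in (i), parts (ii) and (iii) are routine unit-and-idempotent bookkeeping that parallels the classical biderivation case.
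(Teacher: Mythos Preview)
Your argument is correct in all three parts: the double expansion of $D(xy,uv)$ yields exactly $D(x,u)[y,v]=[\sigma(x),\sigma(u)]D(y,v)$ after the mixed terms cancel, and the relabeling together with $\sigma([x,y])=[\sigma(x),\sigma(y)]$ gives (i); parts (ii) and (iii) are the routine consequences you indicate. The paper does not supply its own proof of this lemma but refers to \cite[Lemma~2.3]{Br2} and \cite[Lemma~4.2]{Bk2}, where the same two-way expansion trick is the standard method, so your approach coincides with the intended one.
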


\begin{lemma} \label{aux2}
Let $\T = \Tri(A, M, B)$ be a partible triangular algebra, $\sigma$ an automorphism of $\T$, and $D$ a $\sigma$-biderivation of $\T$. If $x, y \in \T$ are such that $[x, y]Ê= 0$, then $D(x, y) = pD(x, y)q$, where $p$, $q$ are as in Notation \ref{notation}.
\end{lemma}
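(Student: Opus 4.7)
The plan is to show that $D(x,y)$ lies in $p\T q = M$ by deriving annihilation relations for $D(x,y)$ on both sides and then extracting the vanishing Peirce components via carefully chosen test elements.

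First I would invoke Lemma \ref{aux} (i) twice: once as stated,
\[
D(x, y)[u, v] = [\sigma(x), \sigma(y)]\, D(u, v),
\]
and once with the roles of the two pairs swapped,
\[
D(u, v)[x, y] = [\sigma(u), \sigma(v)]\, D(x, y),
\]
for every $u, v \in \T$. Since $[x, y] = 0$ and $\sigma$ is an algebra homomorphism, $[\sigma(x), \sigma(y)] = \sigma([x, y]) = 0$, so both identities collapse to
\[
D(x, y)[u, v] = 0 \quad \text{and} \quad [\sigma(u), \sigma(v)]\, D(x, y) = 0 \quad \text{for all } u, v \in \T.
\]
Surjectivity of $\sigma$ turns the second identity into $[u, v]\, D(x, y) = 0$ for all $u, v \in \T$, so $D(x,y)$ is annihilated on both sides by every commutator in $\T$.

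Next I would use the Peirce decomposition from Notation \ref{notation} to write $D(x, y) = a + m + b$ with $a \in A$, $m \in M$, $b \in B$. Taking $u = p$ and $v = m' \in M$ gives $[p, m'] = m'$, and the triangular multiplication rules (in particular $m m' = 0$ and $b m' = 0$) reduce $D(x, y) m' = 0$ to $a m' = 0$ for every $m' \in M$; faithfulness of $M$ as a left $A$-module forces $a = 0$. Dually, taking $u = q$ and $v = m'$ gives $[q, m'] = -m'$, and the equation $m' D(x, y) = 0$ reduces to $m' b = 0$ for every $m' \in M$, so faithfulness of $M$ as a right $B$-module forces $b = 0$. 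Therefore $D(x, y) = m \in p \T q$, i.e., $D(x, y) = p D(x, y) q$.

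The only subtle point is realizing that Lemma \ref{aux} (i) already supplies a \emph{left}-annihilation relation, after swapping the two pairs and using surjectivity of $\sigma$ to range over all commutators of $\T$; without this observation one could only kill the $A$-component and would be stuck on $b$. It is worth noting that partibility of $\T$ is not actually invoked in the argument, only the standing faithfulness of $M$ as a left $A$-module and as a right $B$-module.
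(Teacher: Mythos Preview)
Your proof is correct and follows essentially the same route as the paper: both use Lemma \ref{aux}(i) to show that $D(x,y)$ annihilates $M$ on the right and is annihilated by $M$ on the left, and then invoke faithfulness of $M$ to kill the $p\T p$- and $q\T q$-components. The only cosmetic difference is that the paper plugs in the specific pair $(p, pmq)$ and uses the explicit form $\sigma(pmq)=p\nu_\sigma(m)q$ together with bijectivity of $\nu_\sigma$, whereas you first deduce the general annihilation relations $D(x,y)[u,v]=0$ and $[u,v]D(x,y)=0$ via surjectivity of $\sigma$ and then specialize; your observation that partibility plays no role beyond the standing faithfulness assumption is also accurate.
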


\begin{proof}
Take $x, y \in \T$ with $[x, y]Ê= 0$. Considering the Peirce decomposition of $\T$ associated to the idempotent $p$ of $\T$, we can write $D(x,y)$ as follows:
\begin{equation} \label{exp}
D(x, y) = pD(x, y)p + pD(x, y)q + qD(x, y)q.
\end{equation}
For $m\in M$, applying Lemma \ref{aux} (i) we get that
\[
0 = D(p, pmq)[x, y] = \sigma([p, pmq]) D(x, y) = \sigma(pmq) D(x, y) = p\nu_\sigma(m)qD(x, y),
\]
which implies that $pMqD(x, y) = 0$, since $\nu_\sigma$ is a bijection. From this, 
we can conclude that $MqD(x, y)q = 0$, which yields $qD(x, y)q = 0$, since 
$M$ is a faithful right $B$-module. Given $m\in M$, a second application of Lemma \ref{aux} (i) produces:
\[
D(x, y)pmq = D(x, y)[p, pmq] = \sigma([x, y])D(p, pmq) = 0.
\]
Thus, $pD(x,y)pM = 0$, which yields $pD(x,y)p = 0$. The result now follows from \eqref{exp}.
\end{proof}

Motivated by the fact that extremal biderivations played an important role in the study of biderivations of triangular algebras; our next task will be to investigate what should be called an extremal $\sigma$-biderivation. 

\begin{proposition}
Let $\A$ be an algebra and $\sigma$ an automorphism of $\A$. The symmetric bilinear map 
$\psi_{x_0}: \A \times \A \to \A$ given by 
\begin{equation} \label{extbi}
\psi_{x_0} (x, y) = [x, [y, x_0]_\sigma]_\sigma, \quad 
\forall \, x, y \in \A \end{equation}
is a $\sigma$-biderivation, provided that the element $x_0 \in \A$ satisfies $x_0 \notin \Z_\sigma (\A)$ and $[[\A, \A], x_0]_\sigma = 0$.
\end{proposition}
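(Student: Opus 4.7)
\medskip

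The plan is to prove the statement by exploiting the symmetry of $\psi_{x_0}$ together with the two identities collected in Lemma \ref{propertiesnewcomm}. The hypothesis $x_0 \notin \Z_\sigma(\A)$ plays no role in verifying the $\sigma$-biderivation axioms (it only ensures $\psi_{x_0}$ is nontrivial); the key assumption is $[[\A, \A], x_0]_\sigma = 0$.

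First I would establish that $\psi_{x_0}$ is genuinely symmetric, which is not visible from the defining formula. Applying Lemma \ref{propertiesnewcomm}(ii) with $y$ and $x_0$ in the inner bracket gives
\[
\psi_{x_0}(x,y) = [x,[y,x_0]_\sigma]_\sigma = [[x,y],x_0]_\sigma + [y,[x,x_0]_\sigma]_\sigma.
\]
The first summand lies in $[[\A, \A], x_0]_\sigma$ and therefore vanishes, which yields the identity
\[
\psi_{x_0}(x,y) = [y,[x,x_0]_\sigma]_\sigma,
\]
and in particular $\psi_{x_0}(x,y) = \psi_{x_0}(y,x)$ for all $x,y\in\A$.

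Next I would verify the $\sigma$-derivation law in the second argument. Using the reformulation just obtained,
\[
\psi_{x_0}(x,yz) = [yz,[x,x_0]_\sigma]_\sigma.
\]
Applying Lemma \ref{propertiesnewcomm}(i) with $w := [x,x_0]_\sigma$ in place of $z$ gives
\[
[yz,w]_\sigma = [y,w]_\sigma\, z + \sigma(y)[z,w]_\sigma,
\]
and substituting back produces
\[
\psi_{x_0}(x,yz) = [y,[x,x_0]_\sigma]_\sigma\, z + \sigma(y)\,[z,[x,x_0]_\sigma]_\sigma = \psi_{x_0}(x,y)z + \sigma(y)\psi_{x_0}(x,z),
\]
where the last equality uses the reformulation one more time (applied to each factor). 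This shows that $\psi_{x_0}$ is a $\sigma$-derivation in the second argument.

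Finally, to obtain the $\sigma$-derivation property in the first argument I would simply invoke the symmetry proved in the first step:
\[
\psi_{x_0}(xy,z) = \psi_{x_0}(z,xy) = \psi_{x_0}(z,x)y + \sigma(x)\psi_{x_0}(z,y) = \psi_{x_0}(x,z)y + \sigma(x)\psi_{x_0}(y,z),
\]
which completes the proof. I do not anticipate any real obstacle; the only subtle point is the rewriting step in the first paragraph, since without passing through $\psi_{x_0}(x,y) = [y,[x,x_0]_\sigma]_\sigma$ the expansion of the second slot would produce terms that are not manifestly of the required form $\psi_{x_0}(x,y)z + \sigma(y)\psi_{x_0}(x,z)$.
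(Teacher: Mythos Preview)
Your proof is correct and follows essentially the same route as the paper: establish symmetry via Lemma \ref{propertiesnewcomm}(ii) together with the hypothesis $[[\A,\A],x_0]_\sigma=0$, and then use Lemma \ref{propertiesnewcomm}(i) to verify the $\sigma$-derivation law in one argument, the other following by symmetry. The only cosmetic difference is that the paper checks the \emph{first} argument directly from the definition $\psi_{x_0}(x,y)=[x,[y,x_0]_\sigma]_\sigma$ (so no reformulation is needed there), whereas you pass through the rewritten form $[y,[x,x_0]_\sigma]_\sigma$ in order to handle the second argument first.
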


\begin{proof}
Let $x_0 \in \A$ such that $x_0 \notin \Z_\sigma (\A)$ and $[[\A, \A], x_0]_\sigma = 0$. 
Clearly, $\psi_{x_0}$ is a bilinear map. To show its symmetry, apply Lemma \ref{propertiesnewcomm} (ii) to get that 
\[
\psi_{x_0}(x, y) = [x, [y, x_0]_\sigma]_\sigma = [[x, y], x_0]_\sigma + [y, [x, x_0]_\sigma]_\sigma = \psi_{x_0}(y, x),
\]
as desired. In order to prove that $\psi_{x_0}$ is a biderivation, it is enough to check that it is a derivation in its first argument. Given $x, y, z \in \A$, applying Lemma \ref{propertiesnewcomm} (i) we obtain that 
\begin{align*}
\psi_{x_0}(xy, z) & = [xy, [z, x_0]_\sigma]_\sigma = [x, [z, x_0]_\sigma]_\sigma \, y + \sigma(x) [y, [z, x_0]_\sigma]_\sigma = 
\\
& = \psi_{x_0}(x, z) y + \sigma(x) \psi_{x_0}(y, z),  
\end{align*}
which concludes the proof.
\end{proof}

In view of the previous result, we introduce the following terminology:

\begin{definition} \label{defiextbi}
Let $\A$ be an algebra and $\sigma$ an automorphism of $\A$. An {\bf extremal $\sigma$-biderivation} is a bilinear map of $\A$ of the form \eqref{extbi}.
\end{definition}

Now, we are in a position to answer Question \ref{question1}.

\begin{theorem} \label{ext0}
Let $\T = \Tri(A, M, B)$ be a partible triangular algebra, $\sigma$ an automorphism of $\T$, and $D$ a $\sigma$-biderivation of $\T$. Then $D$ can be written as a sum of an extremal $\sigma$-biderivation and a biderivation vanishing at $(p, p)$. Moreover, $D = \psi_{D(p,p)} + D_0$, where 
$\psi_{D(p,p)}$ is the extremal $\sigma$-biderivation associated to the element $x_{0}= D(p,p)$, and $D_0$ is a biderivation of $\T$ satisfying $D_0(p, p) = 0$.
\end{theorem}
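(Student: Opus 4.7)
The plan is to prove the ``moreover'' part directly by setting $x_0 := D(p,p)$ and verifying that (a) $\psi_{x_0}$ is a well-defined extremal $\sigma$-biderivation, and (b) $D_0 := D - \psi_{x_0}$ is a $\sigma$-biderivation that vanishes at $(p,p)$. Since both $D$ and $\psi_{x_0}$ are $\sigma$-biderivations, the difference $D_0$ will automatically be a $\sigma$-biderivation, so the only content is (a) together with the evaluation $\psi_{x_0}(p,p) = x_0$.

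First I would apply Lemma \ref{aux2} to the pair $(p,p)$, for which $[p,p] = 0$, to conclude $x_0 = pD(p,p)q \in M$. Next, I would establish the two annihilation identities
\[
x_0[u,v] = 0 \quad\text{and}\quad \sigma([u,v])\,x_0 = 0, \qquad \forall\, u,v \in \T,
\]
by applying Lemma \ref{aux}(i) in the two symmetric ways: taking $x = y = p$ yields $D(p,p)[u,v] = \sigma([p,p])D(u,v) = 0$, and taking $u = v = p$ yields $\sigma([x,y])D(p,p) = D(x,y)[p,p] = 0$. Subtracting these gives $[[u,v], x_0]_\sigma = \sigma([u,v])x_0 - x_0[u,v] = 0$ for all $u,v \in \T$, which is precisely the bracket hypothesis in the definition of an extremal $\sigma$-biderivation.

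The formal requirement $x_0 \notin \Z_\sigma(\T)$ in Definition \ref{defiextbi} is handled by a dichotomy: by Lemma \ref{lemmacenter} every element of $\Z_\sigma(\T)$ is diagonal (lies in $A \oplus B$), so $\Z_\sigma(\T) \cap M = 0$; combined with $x_0 \in M$, either $x_0 \neq 0$ (and $\psi_{x_0}$ is a genuine extremal $\sigma$-biderivation) or $x_0 = 0$ (in which case $\psi_{x_0} \equiv 0$ and the decomposition $D = 0 + D$ is trivial).

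Finally, I would compute $\psi_{x_0}(p,p)$ explicitly. Since $\sigma(A) = A$ and $f_\sigma$ is an algebra automorphism, $\sigma(p) = \sigma(1_A) = 1_A = p$. Together with $x_0 \in M$ (so $px_0 = x_0$ and $x_0 p = 0$) this gives
\[
[p, x_0]_\sigma = \sigma(p)x_0 - x_0 p = x_0, \qquad \psi_{x_0}(p,p) = [p,[p,x_0]_\sigma]_\sigma = [p,x_0]_\sigma = x_0,
\]
so $D_0(p,p) = D(p,p) - \psi_{x_0}(p,p) = 0$, completing the proof. The only genuine subtlety is the $\Z_\sigma(\T)$ technicality, resolved above; everything else is a direct substitution combining Lemmas \ref{aux} and \ref{aux2} with the fact that $\sigma$ fixes the idempotent $p$.
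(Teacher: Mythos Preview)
Your proof is correct and follows essentially the same route as the paper's: both use Lemma~\ref{aux2} to place $x_0 = D(p,p)$ in $M$, invoke Lemma~\ref{aux}(i) (with $x=y=p$ and with $u=v=p$) to verify $[[\T,\T],x_0]_\sigma = 0$, rule out $x_0 \in \Z_\sigma(\T)$ via Lemma~\ref{lemmacenter}, and then compute $\psi_{x_0}(p,p) = x_0$ using $\sigma(p)=p$ and $x_0 \in p\T q$. Your treatment is in fact slightly more explicit than the paper's in two places---you separate the two substitutions into Lemma~\ref{aux}(i) rather than combining them, and you spell out the dichotomy for the degenerate case $x_0 = 0$---but the argument is the same.
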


\begin{proof}
Let $D$ be a $\sigma$-biderivation of $\T$ such that $D(p, p) \neq 0$. Note that 
from $[p, p] = 0$, we get that $D(p, p) = pD(p, p)q$, by an application of Lemma \ref{aux2}. Lemma \ref{lemmacenter} allows us to conclude that $D(p, p) \notin \Z_\sigma (\T)$. Next, we claim that $[[x, y], D(p, p)]_\sigma = 0$, for every $x, y \in \T$. In fact, given $x, y \in \T$, we have that 
\begin{align*}
[[x, y], D(p, p)]_\sigma & = \sigma([x, y])D(p, p) - D(p, p)[x, y] 
\\
& = [\sigma(x), \sigma(y)]D(p, p) - D(p, p)[x, y] = 0,
\end{align*}
by Lemma \ref{aux} (i). Therefore, it makes sense to consider the extremal $\sigma$-biderivation $\psi_{D(p,p)}$. Let $D_0 = D - \psi_{D(p,p)}$; it remains to check that $D_0(p, p) = 0$. We will show that $\psi_{D(p,p)} (p, p) = D(p, p)$.
\begin{align*}
\psi_{D(p,p)} (p, p) & = [p, [p, D(p,p)]_\sigma]_\sigma = [p, \sigma(p)D(p, p) - D(p, p)p]_\sigma 
\\
& = \sigma(p)\sigma(p) D(p, p) - \sigma(p)D(p, p)p.
\end{align*}
Taking into account that $D(p, p) = pD(p, p)q$ and that $\sigma(p) = \left(
\begin{array}{cc}
1_A & 0  \\
    & 0 
\end{array}
\right)$, the right hand side of the equality above becomes $D(p, p)$; that is, $\psi_{D(p,p)} (p, p) = D(p, p)$, which finishes the proof.
\end{proof}

{\bf Proof of Theorem \ref{innercond}.} Let $D$ be a $\sigma$-biderivation of $\T$ such that 
$D(p, p) = 0$. Take 
\[
x = \left(
\begin{array}{cc}
a_x & m_x  \\
    & b_x 
\end{array}
\right) = a_x + m_x + b_x, \quad \quad
y = \left(
\begin{array}{cc}
a_y & m_y  \\
    & b_y 
\end{array}
\right) = a_y + m_y + b_y,
\]
two arbitrary elements of $\T$. The bilinearity of $D$ implies that
\begin{align} \label{Faux3}
D(x, y) & = D(a_x, a_y) +  D(a_x, m_y) +  D(a_x, b_y) + D(m_x, a_y) +  D(m_x, m_y) 
\\ \notag
& +  D(m_x, b_y) + D(m_x, a_y) +  D(m_x, m_y) +  D(m_x, b_y) + D(b_x, a_y) 
\\ \notag
& +  D(b_x, m_y) +  D(b_x, b_y).
\end{align}
Note that an application of Lemma \ref{aux} yields that $D(p, q) = D(q, p) = D(q, q) = 0$. In what follows, we will distinguish several cases: 

\noindent $\diamond$ {\it Case 1.} $D(a, b) = D(b, a) = 0$, 
for all $a \in A$ and $b \in B$.

Applying Lemma \ref{aux2},
taking into account that $[a, b] = 0$,
we get that 
$D(a, b) = pD(a, b)q \,$ and $D(b, a) = pD(b, a)q$. Thus:
\begin{align*}
D(a, b) & = pD(a, b)q = p D(ap, qb)q = p D(a, qb) pq + p \sigma(a) D(p, qb)q = \sigma(a) D(p, qb)q
\\
& = \sigma(a) D(p, q)bq + \sigma(a) \sigma(q) D(p, b) q = \sigma(aq) D(p, b) q = 0.
\end{align*}
It remains to check that $D(b, a) = 0$. Similar calculations give:
\begin{align*}
D(b, a) & = pD(b, a)q = p D(qb, ap)q = p D(q, ap) bq + p \sigma(q) D(b, ap)q = pD(q, ae)b
\\
& = p D(q, a) pb  + p\sigma(a) D(q, p) b  = 0,
\end{align*}
which finishes the proof of {\it Case 1}.

\smallskip 

\noindent $\diamond$ {\it Case 2.} $D(p, b) = D(b, p) = D(p, a) = D(a, p) = 0$, 
for all $a \in A$ and $b \in B$.

By Case 1 we have that $D(p, b) = D(b, p)$, 
for all $b \in B$. Given $a \in A$, 
an application of Lemma \ref{aux} (ii) gives:
\begin{align*}
0 & = D(1, a) = D(p, a) + D(q, a) \stackrel{\small {\it Case \, 1}}{=} D(p, a),
\\
0 & = D(a, 1) = D(a, p) + D(a, q) \stackrel{\small {\it Case \, 1}}{=} D(a, p),
\end{align*}
concluding the proof of {\it Case 2}.

\smallskip 

\noindent $\diamond$ {\it Case 3.} $D(q, b) = D(b, q) = D(q, a) = D(a, q) = 0$, 
for all $a \in A$ and $b \in B$.

\smallskip

The proof is analogous to that of Case \,2.

\smallskip

\noindent $\diamond$ {\it Case 4.} $D(a, m)p = D(m, a)p = 0$, 
for all $a \in A$ and $m \in M$.

Given $a \in A$ and $m\in M$, we have that 
\begin{align*}
D(a, m)p & = D(a, mq)p = \sigma(m)D(a, q)p + D(a, m)qp \stackrel{\small {\it Case \, 3}}{=}0,
\\
D(m, a)p & = D(mq, a)p = \sigma(m)D(q, a)p + D(m, a)qp \stackrel{\small {\it Case \, 3}}{=}0,
\end{align*}
finishing the proof of {\it Case 4}.

\smallskip

The next case can be proved in a similar way.

\noindent $\diamond$ {\it Case 5.} $\sigma(q) D(b, m) = \sigma(q) D(m, b) = 0$,
for all $m \in M$ and $b \in B$.

\smallskip 

\noindent $\diamond$ {\it Case 6.} There exists $\tilde{\lambda}_0 \in \Z_{f_\sigma}(A)$ such that 
\[
D(a, m) = - D(m, a) = \tilde{\lambda}_0 am, \quad \quad D(m, b) = - D(b, m) = \tilde{\lambda}_0 mb,
\]
for all $a \in A$, $m \in M$ and $b \in B$.

Consider the map $\xi:M \to M$ given by $\xi(m) = D(p, m)$, 
for all $m\in M$. Notice that $\xi$ is well-defined, i.e., $\xi(m) \in M$, 
for all $m \in M$. In fact, given $m\in M$, 
we have that 
\begin{align*}
D(p, m) & = D(p, pmq) = D(p, pm)q + \sigma(pm)D(p, q) = \sigma(p)D(p, m)q + D(p, p)mq 
\\
& = \sigma(p)D(p, m)q \in M,
\end{align*}
since $\sigma(p) = \left(
\begin{array}{cc}
1_A & 0  \\
  & 0
\end{array}
\right)$. Trivially, $\xi$ is an additive map. Moreover, $\xi$ satisfies $\xi(amb) = f_\sigma(a)\xi(m)b$, 
for all $a\in A$, $m\in M$, $b \in B$. In fact, given $a\in A$, $m\in M$, $b \in B$, 
we have that 
\begin{align*}
\xi (amb) & = D(p, amb) = \sigma(a)D(p, mb) + D(p, a)mb \stackrel{\small {\it Case \, 2}}{=} \sigma(a) \sigma(m) D(p, b) 
\\
& + \sigma(a) D(p, m) b \stackrel{\small {\it Case \, 2}}{=} \sigma(a) D(p, m) b = f_\sigma(a)\xi(m)b,
\end{align*}
since $\sigma(a) = \left(
\begin{array}{cc}
f_\sigma(a) & 0  \\
     & 0
\end{array}
\right)$, 
and $D(p, m) \in M$. Thus Condition (iv) implies that there exist $\lambda_0 \in \Z_{f_\sigma} (A)$ and $\mu_0 \in \Z_{g_\sigma} (B)$ such that $\xi(m) = \lambda_0 m + \nu_\sigma(m) \mu_0$, for all $m\in M$.
Lemma \ref{lemmacenter} and Condition (i) allow us to consider the element $\tilde{\lambda}_0 := \lambda_0  + \eta(\mu_0) \in \Z_{f_\sigma} (A)$. The calculations above jointly with a second use of Lemma \ref{lemmacenter} imply that 
\[
D(p, m) = \xi(m) =  \lambda_0 m + \nu_\sigma(m) \mu_0 = (\lambda_0  + \eta(\mu_0))m = \tilde{\lambda}_0m, \quad 
\forall \, m \in M.
\]
Similarly, one can find $\tilde{\mu}_0 \in \Z_{f_\sigma} (A)$ such that $D(m, p) = \tilde{\mu}_0m$, 
for all $m\in M$. Next, we claim that $\tilde{\lambda}_0 + \tilde{\mu}_0 = 0$. From Condition (ii) we have that either $A$ or $B$ is noncommutative. Let us assume, for example, that $A$ is noncommutative. Then, there exist $a, a' \in A$ such that $[a, a'] \neq 0$. Applying Lemma \ref{aux} (i), we get that 
\begin{align*}
D(a, a')[p, m] & = \sigma([a, a']) D(p, m) = f_\sigma([a, a']) \tilde{\lambda}_0 m = \tilde{\lambda}_0 [a, a']m,
\\
D(a, a')[m, p] & = \sigma([a, a']) D(m, p) = f_\sigma([a, a']) \tilde{\mu}_0 m = \tilde{\mu}_0 [a, a']m,
\end{align*}
for all $m \in M$. Thus: $(\tilde{\lambda}_0 + \tilde{\mu}_0)[a, a']M = 0$, which implies that $(\tilde{\lambda}_0 + \tilde{\mu}_0)[a, a'] = 0$, since $M$ is a faithful left $A$-module. From this,
 Condition (iii) applies to show  
 that $\tilde{\lambda}_0 + \tilde{\mu}_0 = 0$, which finishes the proof of our claim. Notice that we have just proved that 
\[
D(p, m) = - D(m, p) = \tilde{\lambda}_0m, \quad 
\forall \, m\in M.
\]
From here, an application of Lemma \ref{aux} (ii) gives that
\[
D(m, q) = - D(q, m) = \tilde{\lambda}_0m, \quad 
\forall \, m\in M.
\] 
For $a \in A$, it follows that 
\begin{align*}
D(a, m) & = D(ap, m) = \sigma(a) D(p, m) + D(a, m)p \stackrel{\small {\it Case \, 4}}{=} f_\sigma(a) \tilde{\lambda}_0m = \tilde{\lambda}_0 a m,
\\
D(m, a) & = D(m, ap) = \sigma(a) D(m, p) + D(m, a)p \stackrel{\small {\it Case \, 4}}{=} 
-f_\sigma(a) \tilde{\lambda}_0m = -\tilde{\lambda}_0 a m, 
\end{align*}
for all $m \in M$. Proceeding as above, applying {\it Case 5} one can prove that 
\[
D(m, b) = - D(b, m) = \tilde{\lambda}_0 mb,
\]
for all $m\in M$ and $b\in B$, as desired.

\smallskip 

\noindent $\diamond$ {\it Case 7.} $D(a, a') = \tilde{\lambda}_0 [a, a']$ for every $a, a' \in A$.

Given $a, a' \in A$, we start by showing that $D(a, a') \in A$.
\begin{align*}
D(a, a') & = D(pap, a') = \sigma(p)D(ap, a') + D(p, a')ap \stackrel{\small {\it Case \, 2}}{=}
\sigma(p)\sigma(a) D(p, a') + \sigma(p)D(a, a')p 
\\
& \stackrel{\small {\it Case \, 2}}{=}
\sigma(p)D(a, a')p \in A.
\end{align*}
For $m \in M$, apply Lemma \ref{aux} (i) to obtain that 
\begin{align*}
D(a, a')m = D(a, a')[p, m] = \sigma([a, a'])D(p, m) = f_\sigma([a, a']) \tilde{\lambda}_0 m = \tilde{\lambda}_0 [a, a']m, \quad \forall \, m\in M.
\end{align*}
Thus: $(D(a, a') - \tilde{\lambda}_0 [a, a'])M = 0$, which yields that $D(a, a') = \tilde{\lambda}_0 [a, a']$.

\smallskip

\noindent $\diamond$ {\it Case 8.} $D(b, b') = \left(
\begin{array}{ccc}
0 && 0
\\
&&
\\
  && \eta^{-1}(\tilde{\lambda}_0)[b, b']
\end{array}
\right)$, for all $b, b' \in B$. 

\smallskip 

Take $b, b' \in B$ and write 
$D(b, b') =
\left( 
\begin{array}{cc}
a'' & m''  \\
    & b''
\end{array}
\right)$. On the other hand, we have that 
\begin{align*}
D(b, b') & = D(qbq, b') = \sigma(q)D(bq, b') + D(q, b')bq = \sigma(q)\sigma(b) D(q, b') + \sigma(q)D(b, b')q 
\\
& \stackrel{\small {\it Case \, 3}}{=} \sigma(q)D(b, b')q  = 
\left( 
\begin{array}{cc}
0 & 0  \\
    & b''
\end{array}
\right),
\end{align*}
which implies that $a'' = 0$ and $m'' = 0$. Given $m\in M$, from Lemma \ref{aux} (i) we obtain that 
\[
\sigma(m)D(b, b')  = \sigma([p, m]) D(b, b') = D(p, m)[b, b'] = \tilde{\lambda}_0m [b, b'] = \nu_\sigma(m)\eta^{-1}(\tilde{\lambda}_0)[b, b'].
\]
Taking into account the fact that $\sigma(m) D(b, b') = \nu_\sigma(m)b''$, we have that 
\[
\nu(m)(b'' - \eta^{-1}(\tilde{\lambda}_0)[b, b']) = 0, \quad \forall \, m\in M.
\]
Combining the bijectivity of $\nu$ with the fact that $M$ is faithful as a $B$-module,  we obtain that 
$b'' = \eta^{-1}(\tilde{\lambda}_0)$, finishing the proof of {\it Case 8}.

\smallskip

\noindent $\diamond$ {\it Case 9.} $D(m, m') = 0$, for all $m, m' \in M$.

Given $m, m' \in M$, since $[m, m'] = 0$, Lemma \ref{aux2} can be applied to get that $D(m, m') = pD(m, m')q \in M$. Next, fix $m_0 \in M$ and define a map $\xi: M \to M$ by $\xi(m) = D(m, m_0)$,
for all $m \in M$. The above calculation yields that $\xi$ is well-defined. Moreover, $\xi$ is an additive map satisfying $\xi(amb)= f(a)\xi(m)b$, for all $a\in A$, $m\in M$ and $b\in B$. In fact, let $a\in A$, $m\in M$, and $b\in B$. Then 
\begin{align*}
\xi(amb) & = D(amb, m_0) = \sigma(a)D(mb, m_0) + D(a, m_0)mb \stackrel{\small {\it Case \, 6}}{=} \sigma(a) \sigma(m) D(b, m_0) 
\\ & + \sigma(a)D(m, m_0)b + \tilde{\lambda}_0 a m_0 m b \stackrel{\small {\it Case \, 6}}{=} - \sigma(a) \sigma(m)\tilde{\lambda}_0 m_0 b + f(a)D(m, m_0)b 
\\
& = f(a)\xi(m)b,
\end{align*}
since $\tilde{\lambda}_0 m_0 = \nu_\sigma(m_0)\eta^{-1}(\tilde{\lambda}_0)$ and $\nu_\sigma(m) \nu_\sigma(m_0) = 0$. Now apply Condition (iv) to find $\lambda'' \in \Z_{f_\sigma}(A)$ and $\mu'' \in \Z_{g_\sigma}(B)$ such that $\xi(m) = \lambda''m + \nu_\sigma(m)\mu''$, for all $m\in M$. Set $\lambda_{m_0} := \lambda'' + \eta(\mu'') \in \Z_{f_\sigma}(A)$. Notice that $\xi(m) = \lambda''m + \nu_\sigma(m)\mu''= \lambda_{m_0}m$, for all $m \in M$.
From here, proceeding as in the proof of {\it Case 6}, we get that $\lambda_{m_0} = 0$, which implies that $\xi(m) = D(m, m_0) = 0$, for all $m \in M$ and $m_0 \in M$. 

Taking into account what has already been proved, \eqref{Faux3} can be rewritten as:
\begin{align*} 
D(x, y) & = \tilde{\lambda}_0[a_x, a_y] +  \tilde{\lambda}_0 a_x m_y - \tilde{\lambda}_0 a_y m_x + \tilde{\lambda}_0 m_x b_y - \tilde{\lambda}_0 m_y b_x + \eta^{-1}(\tilde{\lambda}_0)[b_x, b_y].
\end{align*}

To finish, we will show that $D = \Delta_\lambda$, 
where $\Delta_\lambda$ is the inner $\sigma$-biderivation associated to the element 
$\lambda = 
\left(
\begin{array}{ccc}
\tilde{\lambda}_0 && 0  
\\
&&
\\
  && \eta^{-1}(\tilde{\lambda}_0)
\end{array}
\right)$. Notice that it makes sense to consider $\Delta_\lambda$ since $\lambda \in \Z_\sigma(\T)$. On the other hand, from 
\[
[x, y] = 
\left(
\begin{array}{ccc}
[a_x, a_y] && a_xm_y + m_xb_y -a_ym_x - m_y b_x  \\
&&
\\
  && [b_x, b_y]
\end{array}
\right),
\]
it is easy to check that $\tilde{\lambda}_0 [x, y]Ê= D(x, y)$,
which concludes the proof of the theorem.
\qed


\section{$\sigma$-commuting maps of triangular algebras}


Let $\A$ be an algebra and $\sigma$ an automorphism of $\A$. Notice that in terms of the $\sigma$-commutator, a linear map $\Theta$ of $\A$ is $\sigma$-commuting if it satisfies 
\begin{equation} \label{eqcomm1}
[x, \Theta(x)]_\sigma = 0,  
\quad 
\forall \, x \in \A. \tag{$\sigma$-c1}
\end{equation}
A linearization of \eqref{eqcomm1} gives the equivalent condition 
\begin{equation} \label{eqcomm2}
[x, \Theta(y)]_\sigma = - [y, \Theta(x)]_\sigma, \quad 
\forall \, x, y \in \A. \tag{$\sigma$-c2}
\end{equation}
Given $\lambda \in \Z_\sigma(A)$ and a $\sigma$-central map $\Omega$ of $\A$, i.e., a linear map $\Omega: \A \to \Z_\sigma(A)$, it is straightforward to check that the map $\Theta: \A \to \A$ defined by 
\begin{equation} \label{sigmaProp}
\Theta(x) = \lambda x + \Omega(x), \quad 
\forall \, x \in \A
\end{equation}
is $\sigma$-commuting. Motivated by this fact and keeping in mind the concept of proper commuting maps,  
we introduce the following notion:

\begin{definition}
Let $\A$ be an algebra and $\sigma$ an automorphism of $\A$. Maps of the form \eqref{sigmaProp} will be called {\bf proper $\sigma$-commuting maps}.
\end{definition}

In this section, we will investigate when a $\sigma$-commuting map of a partible triangular algebra is proper. Sufficient conditions will be given on a partible triangular algebra for all its $\sigma$-commuting maps to be proper. 

Let us start with the following elementary result:

\begin{lemma}
Let $\T = \Tri(A, M, B)$ be a triangular algebra and $\sigma$ a partible automorphism of $\T$. Let $\sigma = \phi_z \bar{\sigma}$, where $\phi_z (x) = z^{-1}xz$, for all $x \in \T$; and $\bar \sigma$ is an automorphism of $\T$ such that $\bar \sigma(A) = A$, $\bar \sigma(M) = M$, $\bar \sigma(B) = B$. Then a linear map $\Theta: \T \to \T$ is $\sigma$-commuting if and only if $\bar \Theta(x) = z \Theta(x)$, for all $x \in \T$, is $\bar \sigma$-commuting. 
\end{lemma}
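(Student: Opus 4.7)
The plan is to mimic the proof of the analogous result stated earlier in the section for $\sigma$-derivations (and $\sigma$-biderivations). The whole argument rests on a single algebraic identity: since $\sigma = \phi_z \bar{\sigma}$, one has $\sigma(x) = z^{-1} \bar{\sigma}(x) z$ for every $x \in \T$, or equivalently $z \sigma(x) = \bar{\sigma}(x) z$. Because $\bar{\Theta}$ is obtained from $\Theta$ by the $R$-linear invertible map ``left multiplication by $z$'', no issue of linearity arises, and the equivalence reduces to a one-line manipulation in each direction.

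For the forward implication, I would fix $x \in \T$ and assume $\Theta(x) x = \sigma(x) \Theta(x)$. Multiplying on the left by $z$ and inserting $zz^{-1}$ between $z\sigma(x)$ and $\Theta(x)$ gives
\[
\bar{\Theta}(x) x \;=\; z\Theta(x) x \;=\; z\sigma(x)\Theta(x) \;=\; z z^{-1} \bar{\sigma}(x) z \Theta(x) \;=\; \bar{\sigma}(x)\, \bar{\Theta}(x),
\]
which is exactly the $\bar{\sigma}$-commuting identity for $\bar{\Theta}$.

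For the converse, I would start from $\bar{\Theta}(x) x = \bar{\sigma}(x) \bar{\Theta}(x)$ and multiply on the left by $z^{-1}$, using $\Theta(x) = z^{-1} \bar{\Theta}(x)$ and $z^{-1}\bar{\sigma}(x) z = \sigma(x)$, to recover $\Theta(x) x = \sigma(x) \Theta(x)$. Since the argument is a straightforward symmetric computation there is no real obstacle; the only point worth flagging is that one must not forget that $\bar{\Theta}$ inherits $R$-linearity from $\Theta$, which is automatic as $z$ is a fixed element of $\T$. In view of this, the proof can be presented in essentially two displayed equations, one per direction, mirroring the existing proof for $\sigma$-derivations.
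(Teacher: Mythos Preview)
Your proposal is correct and is exactly the approach the paper intends: the paper states this lemma as an ``elementary result'' without proof, and your argument mirrors precisely the computation it gives for the analogous lemma on $\sigma$-derivations in Section~5, using the key identity $z\sigma(x) = \bar{\sigma}(x)z$ in both directions. There is nothing to add.
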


Given a partible triangular algebra $\T = \Tri(A, M, B)$, the result above allows us to assume that the automorphisms $\sigma$ of $\T$ satisfying that $\sigma(A) = A$, $\sigma(M) = M$, $\sigma(B) = B$, and therefore they are written as in \eqref{autom}.

\begin{theorem} \label{descricomm}
Let $\T = \Tri(A, M, B)$ be a partible triangular algebra and $\sigma$ an automorphism of $\T$. Then every $\sigma$-commuting map $\Theta$ of $\T$ is of the following form:
\begin{equation} \tag{$\sigma$-cm} \label{descri}
\Theta 
\left( 
\begin{array}{cc}
a & m  \\
  & b
\end{array}
\right) = 
\left( 
\begin{array}{ccc}
\delta_1(a) + \delta_2(m) + \delta_3(b) && \delta_1(1_A)m - \nu_\sigma(m)\mu_1(1_A) 
  \\
  &&
  \\
  && \mu_1(a) + \mu_2(m) + \mu_3(b)
\end{array}
\right),
\end{equation}
where 
\begin{alignat*}{3}
& \delta_1: A \to A, \quad & \delta_2: M \to Z_{f_\sigma}(A), \quad  & \delta_3: B \to Z_{f_\sigma}(A), 
\\
& \mu_1: A \to Z_{g_\sigma}(B), \quad & \mu_2: M \to Z_{g_\sigma}(B), \quad  & \mu_3: B \to B,
\end{alignat*}
are linear maps such that 
\begin{itemize}
\item[{\rm (i)}] $\delta_1$ is an $f_\sigma$-commuting map of $A$,
\item[{\rm (ii)}] $\mu_3$ is a $g_\sigma$-commuting map of $B$,
\item[{\rm (iii)}] $\delta_1(a)m - \nu_\sigma(m)\mu_1(a) = f_\sigma(a)(\delta_1(1_A)m - \nu_\sigma(m)\mu_1(1_A))$,
\item[{\rm (iv)}] $\nu_\sigma(m)\mu_3(b) - \delta_3(b)m = (\nu_\sigma(m)\mu_3(1_B) - \delta_3(1_B)m )b$,
\item[{\rm (v)}] $\delta_2(m)m = \nu_\sigma(m)\mu_2(m)$,
\item[{\rm (vi)}] $\delta_1(1_A)m - \nu_\sigma(m)\mu_1(1_A) = \nu_\sigma(m)\mu_3(1_B)  - \delta_3(1_B)m$,
\end{itemize}
for all $a\in A$, $m\in M$ and $b\in B$. 
\end{theorem}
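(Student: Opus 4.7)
The plan is to exploit the Peirce decomposition of $\T$ to split $\Theta$ into nine component maps, and then to extract constraints by feeding carefully chosen inputs into the linearized $\sigma$-commuting relation
\[
[x, \Theta(y)]_\sigma + [y, \Theta(x)]_\sigma = 0.
\]
After the preliminary reduction supplied by the lemma immediately above the theorem, we may assume $\sigma(A) = A$, $\sigma(M) = M$, $\sigma(B) = B$, so that $\sigma(p) = p$, $\sigma(q) = q$ and $\sigma(a+m+b) = f_\sigma(a) + \nu_\sigma(m) + g_\sigma(b)$. Writing
\[
\Theta(a+m+b) = \bigl[\delta_1(a)+\delta_2(m)+\delta_3(b)\bigr] + \bigl[\varphi_1(a)+\varphi_2(m)+\varphi_3(b)\bigr] + \bigl[\mu_1(a)+\mu_2(m)+\mu_3(b)\bigr],
\]
with the three bracketed sums in $A$, $M$, $B$ respectively, my task reduces to identifying the nine component maps.

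The key observation is that $[p,\Theta(y)]_\sigma = p\Theta(y) - \Theta(y)p$ is exactly the $M$-component of $\Theta(y)$. Substituting $x = p$ into the linearized relation and reading off Peirce components for $y = a+m+b$ yields three clean identities: the $A$- and $B$-pieces give $\delta_1(1_A) \in \Z_{f_\sigma}(A)$ and $\mu_1(1_A) \in \Z_{g_\sigma}(B)$, while the $M$-piece forces $\varphi_1(a) = \varphi_3(b) = 0$ and $\varphi_2(m) = \delta_1(1_A)m - \nu_\sigma(m)\mu_1(1_A)$, which is the $M$-column of the target formula \eqref{descri}. Repeating with $x = q$ gives $\delta_3(1_B) \in \Z_{f_\sigma}(A)$, $\mu_3(1_B) \in \Z_{g_\sigma}(B)$, and comparison with the previous expression for $\varphi_2(m)$ yields condition (vi).

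I would then cycle through the remaining natural substitutions. Taking $x = y = a$ and $x = y = b$ collapses to the pure $\sigma$-commuting relations on $A$ and $B$, giving (i) and (ii). The mixed pair $(x,y) = (m,a)$, after expansion and component comparison, produces both $\delta_2(m) \in \Z_{f_\sigma}(A)$ and condition (iii); similarly $(m,b)$ produces $\mu_2(m) \in \Z_{g_\sigma}(B)$ together with a preliminary form of (iv), which is then rewritten via (vi) into the displayed formula. The pair $(a,b)$ upgrades the facts about $\mu_1(1_A)$ and $\delta_3(1_B)$ to $\mu_1(a) \in \Z_{g_\sigma}(B)$ and $\delta_3(b) \in \Z_{f_\sigma}(A)$ for all $a, b$. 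Finally, $x = y = m$ gives $\nu_\sigma(m)\mu_2(m) = \delta_2(m)m$, which is (v).

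No step is conceptually deep; the only genuine obstacle is the bookkeeping. One must repeatedly use the Peirce annihilation rules $MA = BM = MM = AB = BA = 0$ and keep track of which summand each term in the expansions of $\sigma(y)\Theta(x)$ and $\Theta(x)y$ lands in, so that the resulting identity can be split cleanly into $A$-, $M$-, and $B$-components. Nowhere is partibility itself invoked beyond the preliminary reduction that allowed us to write $\sigma$ in the diagonal form \eqref{autom}; all subsequent work is a direct linear-algebraic unwinding of the $\sigma$-commuting identity.
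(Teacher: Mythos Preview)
Your proposal is correct and follows essentially the same approach as the paper: write $\Theta$ via its nine Peirce components and then substitute carefully chosen pairs into \eqref{eqcomm1} and \eqref{eqcomm2} to pin down each map and each relation. The paper's ordering is slightly different (it begins with \eqref{eqcomm1} at $x=a$ and $x=b$, then uses the pair $\bigl(a+b,\,q\bigr)$ in \eqref{eqcomm2} to kill $\tau_1$ and $\tau_3$), whereas you lead with $x=p$ and your ``$M$-component'' observation, but the content is identical.

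One small bookkeeping slip: the $M$-piece of the $x=p$ substitution in \eqref{eqcomm2} alone yields only $\varphi_1(a)=-f_\sigma(a)\varphi_1(1_A)$ and $\varphi_3(b)=\varphi_1(1_A)b$, not $\varphi_1=\varphi_3=0$ outright; you still need $\varphi_1(1_A)=0$, which does not follow from \eqref{eqcomm2} at $(p,p)$ without a factor-of-$2$ assumption. The fix is immediate---apply \eqref{eqcomm1} at $x=p$ (your own key observation then reads $[p,\Theta(p)]_\sigma=\varphi_1(1_A)=0$)---and this is exactly what the paper does by extracting $\tau_1(1_A)=0$ from \eqref{eqcomm1} at $x=a$ before moving to the bilinearized relation.
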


\begin{proof}
Let $\Theta$ be a $\sigma$-commuting map of $\T$; write:  
\[
\Theta 
\left( 
\begin{array}{cc}
a & m  \\
  & b
\end{array}
\right) = 
\left( 
\begin{array}{ccc}
\delta_1(a) + \delta_2(m) + \delta_3(b) && \tau_1(a) + \tau_2(m) + \tau_3(b)) 
 \\
 &&
 \\
  && \mu_1(a) + \mu_2(m) + \mu_3(b)
\end{array}
\right),
\] 
for all $\left( 
\begin{array}{cc}
a & m  \\
  & b
\end{array}
\right) \in \T$. In what follows, we will apply equations \eqref{eqcomm1} and \eqref{eqcomm2} 
many times with appropriate elements of $\T$. Apply \eqref{eqcomm1} with $x = \left( 
\begin{array}{cc}
a & 0  \\
  & 0
\end{array}
\right)$ to get that 
\begin{equation} \label{uno0}
[a, \delta_1(a)]_{f_\sigma} = 0, \quad \quad f_\sigma(a) \tau_1(a) = 0,
\end{equation}
for all $a \in A$. This shows (i) and implies that 
\begin{equation} \label{uno1}
\tau_1(1_A)= 0,
\end{equation}
by letting $a = 1_A$ in the second equation of \eqref{uno0}. A second application of \eqref{eqcomm1} with $x = \left( 
\begin{array}{cc}
0 & 0  \\
  & b
\end{array}
\right)$ proves (ii), and also that 
\begin{equation} \label{uno2}
\tau_3(1_B) = 0. 
\end{equation}
Next, take $x = \left( 
\begin{array}{cc}
a & 0  \\
  & b
\end{array}
\right)$ and $y = q$ in \eqref{eqcomm2} to obtain (using \eqref{uno2}) that 
\begin{equation} \label{two0}
[a, \delta_3(1_B)]_{f_\sigma} = 0, \quad \quad \quad [b, \mu_3(1_B)]_{g_\sigma} = 0, 
\end{equation}
\begin{equation} \label{two1}
- \tau_3 (1_B) b  - \tau_1(a)  - \tau_3(b) = 0
\end{equation} 
for all $a\in A$ and $b \in B$. By \eqref{uno2} equation \eqref{two1} becomes
\begin{equation} \label{two2}
\tau_1(a) +  \tau_3(b) = 0,
\end{equation}
for all $a\in A$ and $b \in B$. Next, 
let $a = 1_A$ (respectively, $b = 1_B$) in \eqref{two2} and apply \eqref{uno1} (respectively, \eqref{uno2}) to obtain that
\begin{equation} \label{two3}
\tau_1(a) = 0, \quad \quad \quad \tau_3(b) = 0, 
\end{equation}
for all $a \in A$ and $b\in B$. On the other hand, (v) follows from  \eqref{eqcomm1} by 
considering $x = \left( 
\begin{array}{cc}
0 & m  \\
  & 0
\end{array}
\right)$. Now letting $x = \left( 
\begin{array}{cc}
a & 0  \\
  & 0
\end{array}
\right)$ and $y = \left( 
\begin{array}{cc}
0 & m  \\
  & 0
\end{array}
\right)$ in \eqref{eqcomm2} produces the following equations:
\begin{align} 
& [a, \delta_2(m)]_{f_\sigma} = 0, \label{three0}
\\ \label{three1}
& f_\sigma(a)\tau_2(m) = \delta_1(a)m  -  \nu_\sigma(m) \mu_1(a),
\end{align}
for all $a \in A$ and $m \in M$. Note that \eqref{three0} says that $\delta_2(M) \subseteq \Z_{f_\sigma}(A)$. Let
$a = 1_A$ in \eqref{three1} to obtain that 
\begin{equation} \label{three2}
\tau_2(m)  = \delta_1(1_A)m - \nu_\sigma(m) \mu_1(1_A),
\end{equation}
for all $m\in M$. Notice that (iii) follows from \eqref{three1} and \eqref{three2}.

Now taking
$x = \left( 
\begin{array}{cc}
0 & 0  \\
  & b
\end{array}
\right)$ and $y = \left( 
\begin{array}{cc}
0 & m  \\
  & 0
\end{array}
\right)$ in \eqref{eqcomm2}, we get that 
\begin{align} 
& [b, \mu_2(m)]_{g_\sigma} = 0, \label{four0}
\\ \label{four1}
& -\tau_2(m)b  +  \nu_\sigma(m) \mu_3(b) - \delta_3(b)m = 0,
\end{align}
for all $m \in M$ and $b \in B$. Equation \eqref{four0} gives $\mu_2(M) \subseteq \Z_{g_\sigma}(B)$. An application of \eqref{four0} in \eqref{four1} allows us to write that
\begin{equation} \label{four2}
\tau_2(m) b = \nu_\sigma(m) \mu_3(b) - \delta_3(b)m,
\end{equation}
for all $m \in M$. Letting
$b = 1_B$ in \eqref{four2} we obtain that 
\begin{equation} \label{four3}
\tau_2(m)  = \nu_\sigma(m) \mu_3(1_B) - \delta_3(1_B)m,
\end{equation}
for all $m\in M$. From \eqref{four2} and \eqref{four3} we prove (iv). On the other hand, (vi) follows from \eqref{three2} and \eqref{four3}. Next notice that an application of \eqref{two3} and \eqref{three2}
gives: 
\[
\tau_1(a) + \tau_2(m) + \tau_3(b) = \delta_1(1_A)m - \nu_\sigma(m) \mu_1(1_A),
\] 
as desired. It remains to show that $\mu_1(A) \subseteq Z_{g_\sigma}(B)$ and $\delta_3(B) \subseteq Z_{f_\sigma}(A)$,
which follows from an application of \eqref{eqcomm2} with $x = \left( 
\begin{array}{cc}
a & 0  \\
  & 0
\end{array}
\right)$ and $y = \left( 
\begin{array}{cc}
0 & 0 \\
  & b
\end{array}
\right)$, finishing the proof.
\end{proof}

In what follows, we will use this theorem 
without further mention. In other words, whenever we are given a $\sigma$-commuting map $\Theta$ of a partible triangular algebra $\T$, we will assume that $\Theta$ is of the form \eqref{descri}. Our next result provides a characterization of the properness of a $\sigma$-commuting map of $\T$ in terms of its behavior with the $\sigma$-center of $\T$. 

\begin{theorem} \label{caract}
Let $\T = \Tri(A, M, B)$ be a partible triangular algebra and $\sigma$ an automorphism of $\T$. Then for every $\sigma$-commuting map $\Theta$ of $\T$, the following conditions are equivalent:
\begin{itemize}
\item[{\rm (i)}] $\Theta$ is a proper $\sigma$-commuting map.
\item[{\rm (ii)}] $\mu_1(A) \subseteq \pi_B(\Z_\sigma(\T))$, $\delta_3(B) \subseteq \pi_A(\Z_\sigma(\T))$ and 
\[
\left( 
\begin{array}{ccc}
\delta_2(m) && 0 
\\
&&
\\
  && \mu_2(m)
\end{array}
\right) \in \Z_\sigma(\T), \quad \forall \, m \in M. 
\]
\item[{\rm (iii)}] $\delta_1(1_A) \in \pi_A(\Z_\sigma(\T))$, $\mu_1(1_A) \in \pi_B(\Z_\sigma(\T))$ and 
\[
\left( 
\begin{array}{ccc}
\delta_2(m) && 0 
\\
&&
\\
  && \mu_2(m)
\end{array}
\right) \in \Z_\sigma(\T), \quad 
\forall \, m \in M.
\]
\end{itemize}
\end{theorem}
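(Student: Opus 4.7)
The plan is to establish the chain of equivalences by first proving (i) $\Leftrightarrow$ (iii) directly, deducing (iii) $\Rightarrow$ (ii) as an immediate corollary, and reserving (ii) $\Rightarrow$ (iii) as the main piece of work. Throughout, I will combine the explicit form of $\Theta$ from Theorem \ref{descricomm} with the description of $\Z_\sigma(\T)$ given in Lemma \ref{lemmacenter}; in particular, I will use repeatedly the unique isomorphism $\eta\colon \pi_B(\Z_\sigma(\T))\to\pi_A(\Z_\sigma(\T))$ satisfying $\eta(b)m=\nu_\sigma(m)b$.

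For (i) $\Rightarrow$ (iii), I write $\Theta(x)=\lambda x+\Omega(x)$ with $\lambda=\left(\begin{smallmatrix}\lambda_A&0\\&\lambda_B\end{smallmatrix}\right)\in\Z_\sigma(\T)$ and $\Omega(x)\in\Z_\sigma(\T)$. Evaluation at $x=p$ gives $\Omega(p)=\left(\begin{smallmatrix}\delta_1(1_A)-\lambda_A&0\\&\mu_1(1_A)\end{smallmatrix}\right)$, from which $\mu_1(1_A)\in\pi_B(\Z_\sigma(\T))$ and $\delta_1(1_A)=\lambda_A+\pi_A(\Omega(p))\in\pi_A(\Z_\sigma(\T))$. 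Evaluation at $x=m\in M$ forces the $M$-component of $\Omega(m)$ to vanish, leaving $\left(\begin{smallmatrix}\delta_2(m)&0\\&\mu_2(m)\end{smallmatrix}\right)=\Omega(m)\in\Z_\sigma(\T)$, which is precisely the remaining part of (iii).

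For (iii) $\Rightarrow$ (i), I set $\lambda_A:=\delta_1(1_A)-\eta(\mu_1(1_A))\in\pi_A(\Z_\sigma(\T))$ and $\lambda_B:=\eta^{-1}(\lambda_A)$, so that $\lambda:=\left(\begin{smallmatrix}\lambda_A&0\\&\lambda_B\end{smallmatrix}\right)\in\Z_\sigma(\T)$ and, by construction, $\lambda_A m=\delta_1(1_A)m-\nu_\sigma(m)\mu_1(1_A)$; this guarantees that the $M$-entry of $\Omega(x):=\Theta(x)-\lambda x$ vanishes identically. Checking $\Omega(a)\in\Z_\sigma(\T)$ then reduces to verifying $(\delta_1(a)-\lambda_A a)m=\nu_\sigma(m)\mu_1(a)$ (direct from Theorem \ref{descricomm}(iii) together with $\lambda_A\in\Z_{f_\sigma}(A)$) and then lifting this to $\delta_1(a)-\lambda_A a\in\Z_{f_\sigma}(A)$ via $\nu_\sigma(a'm)=f_\sigma(a')\nu_\sigma(m)$ and faithfulness of $M$ as a left $A$-module. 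The check for $\Omega(b)$ is dual, using Theorem \ref{descricomm}(iv), (vi) and faithfulness of $M$ on the right, while $\Omega(m)\in\Z_\sigma(\T)$ is exactly the hypothesis in (iii).

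The implication (iii) $\Rightarrow$ (ii) now follows at once from (i) by reading off the diagonal entries of $\Omega(a)$, $\Omega(b)$ and $\Omega(m)$. The main obstacle is (ii) $\Rightarrow$ (iii), where the only nontrivial step is to show $\delta_1(1_A)\in\pi_A(\Z_\sigma(\T))$. Writing $\tilde\mu_1(a):=\eta(\mu_1(a))$, Theorem \ref{descricomm}(iii) together with faithfulness of $M$ on the left gives $\delta_1(a)=\tilde\mu_1(a)+f_\sigma(a)c$ with $c:=\delta_1(1_A)-\tilde\mu_1(1_A)$. Substituting into the $f_\sigma$-commuting identity $f_\sigma(a)\delta_1(a)=\delta_1(a)a$ and cancelling $f_\sigma(a)\tilde\mu_1(a)=\tilde\mu_1(a)a$ leaves $f_\sigma(a)[f_\sigma(a)c-ca]=0$; the substitution $a\mapsto a+1_A$ then removes the prefactor and forces $c\in\Z_{f_\sigma}(A)$. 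A parallel argument involving Theorem \ref{descricomm}(iv), the $g_\sigma$-commuting of $\mu_3$, and faithfulness of $M$ on the right produces $c':=\mu_3(1_B)-\eta^{-1}(\delta_3(1_B))\in\Z_{g_\sigma}(B)$. Finally, Theorem \ref{descricomm}(vi) rewrites as $cm=\nu_\sigma(m)c'$ for every $m\in M$, so $c=\eta(c')\in\pi_A(\Z_\sigma(\T))$, and therefore $\delta_1(1_A)=\tilde\mu_1(1_A)+c\in\pi_A(\Z_\sigma(\T))$, completing the proof.
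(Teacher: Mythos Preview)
Your proof is correct and follows the same overall scheme as the paper --- establishing (i)~$\Leftrightarrow$~(iii) and (ii)~$\Leftrightarrow$~(iii) --- with two differences worth noting. For (iii)~$\Rightarrow$~(ii) you pass through (i) and simply read off the diagonal entries of $\Omega(a)$, $\Omega(b)$, $\Omega(m)$; the paper instead appeals to the preliminary Lemma~\ref{lemaux} (showing that $\mu_1^{-1}(\pi_B(\Z_\sigma(\T)))$ is an ideal of $A$ containing $1_A$), so your route bypasses that lemma entirely. Conversely, for (ii)~$\Rightarrow$~(iii) you work harder than necessary: the arguments showing $c\in\Z_{f_\sigma}(A)$ and $c'\in\Z_{g_\sigma}(B)$ via the $f_\sigma$- and $g_\sigma$-commuting identities are redundant, because the single relation $cm=\nu_\sigma(m)c'$ for all $m\in M$ (which you obtain from Theorem~\ref{descricomm}(vi)) together with faithfulness of $M$ already forces $\left(\begin{smallmatrix}c&0\\&c'\end{smallmatrix}\right)\in\Z_\sigma(\T)$. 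The paper exploits exactly this: from $\delta_3(1_B)\in\pi_A(\Z_\sigma(\T))$ and (vi) it writes $\delta_1(1_A)m=\nu_\sigma(m)\bigl(\mu_1(1_A)+\mu_3(1_B)-\eta^{-1}(\delta_3(1_B))\bigr)$ and concludes $\delta_1(1_A)\in\pi_A(\Z_\sigma(\T))$ in one line.
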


In order to prove the theorem above, we need a preliminary result.

\begin{lemma} \label{lemaux}
Let $\T = \Tri(A, M, B)$ be a triangular algebra and $\sigma$ an automorphism of $\T$ such that $\sigma(A) = A$, $\sigma(M) = M$ and $\sigma(B) = B$. Then for every $\sigma$-commuting map $\Theta$ of $\T$, it follows that 
\[
[A, A] \subseteq \mu^{-1}_1(\pi_B(\Z_\sigma(\T))) \lhd A, \quad \quad \quad 
[B, B] \subseteq \delta^{-1}_3(\pi_A(\Z_\sigma(\T))) \lhd B.
\]
\end{lemma}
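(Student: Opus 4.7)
The plan is to distill conditions (iii) and (iv) of Theorem \ref{descricomm} into tractable module identities on $M$, then exploit the faithfulness of $M$ as a left $A$-module and right $B$-module (assumed throughout the section). For the $A$-side, set $\alpha(a) := \delta_1(a) - f_\sigma(a)\delta_1(1_A) \in A$, so that condition (iii) rewrites as the identity
\begin{equation*}
\nu_\sigma(m)\mu_1(a) = \alpha(a)\,m + f_\sigma(a)\,\nu_\sigma(m)\mu_1(1_A). \tag{S}
\end{equation*}
Substituting $m \mapsto a'm$ directly into (iii), using $\nu_\sigma(a'm) = f_\sigma(a')\nu_\sigma(m)$, and applying (S) to eliminate the term $f_\sigma(a')\nu_\sigma(m)\mu_1(a)$ yields, after rearrangement, the commutator identity
\begin{equation*}
\bigl(f_\sigma(a')\alpha(a) - \alpha(a)\,a'\bigr)\,m = f_\sigma([a,a'])\,\nu_\sigma(m)\mu_1(1_A). \tag{U}
\end{equation*}

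Now apply (S) with $a$ replaced by $[a,a']$ and use (U) to rewrite the term $f_\sigma([a,a'])\nu_\sigma(m)\mu_1(1_A)$; this produces $\nu_\sigma(m)\mu_1([a,a']) = \gamma(a,a')\,m$ for every $m \in M$, with $\gamma(a,a') := \alpha([a,a']) + f_\sigma(a')\alpha(a) - \alpha(a)a' \in A$. The faithfulness of $M$ as a left $A$-module then forces $\gamma(a,a') \in \Z_{f_\sigma}(A)$: replacing $m$ by $a''m$ and using $\nu_\sigma(a''m) = f_\sigma(a'')\nu_\sigma(m)$ gives $\gamma(a,a')\,a''\,m = f_\sigma(a'')\gamma(a,a')\,m$ for all $m$, whence $\gamma(a,a')\,a'' = f_\sigma(a'')\gamma(a,a')$ for all $a'' \in A$. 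Since $\mu_1([a,a']) \in \Z_{g_\sigma}(B)$ by Theorem \ref{descricomm}, Lemma \ref{lemmacenter} places $\mu_1([a,a'])$ in $\pi_B(\Z_\sigma(\T))$, proving $[A,A] \subseteq \mu_1^{-1}(\pi_B(\Z_\sigma(\T)))$. To see that $I := \mu_1^{-1}(\pi_B(\Z_\sigma(\T)))$ is a two-sided ideal of $A$, fix $a \in I$ with witness $\hat a \in \Z_{f_\sigma}(A)$ satisfying $\hat a\,m = \nu_\sigma(m)\mu_1(a)$; combining with (S) gives $(\hat a - \alpha(a))\,m = f_\sigma(a)\nu_\sigma(m)\mu_1(1_A)$. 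Replacing $m$ by $a'm$ (respectively, left-multiplying by $f_\sigma(a')$) then expresses $f_\sigma(aa')\nu_\sigma(m)\mu_1(1_A)$ (respectively, $f_\sigma(a'a)\nu_\sigma(m)\mu_1(1_A)$) as $c\,m$ with $c \in A$; plugging back into (S) applied to $aa'$ and $a'a$ and invoking the same uniqueness argument shows that both $\mu_1(aa')$ and $\mu_1(a'a)$ lie in $\pi_B(\Z_\sigma(\T))$.

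The $B$-side is handled dually. Setting $\bar\alpha(b) := \mu_3(b) - \mu_3(1_B)\,b \in B$, condition (iv) rewrites as
\begin{equation*}
\delta_3(b)\,m = \delta_3(1_B)\,m\,b + \nu_\sigma(m)\bar\alpha(b), \tag{S$'$}
\end{equation*}
which plays the role of (S). Substituting $m \mapsto mb$ and $m \mapsto mb'$ into (S$'$) and using $\nu_\sigma(mb) = \nu_\sigma(m)g_\sigma(b)$ rewrites $\delta_3(1_B)mbb'$ and $\delta_3(1_B)mb'b$ purely as $\nu_\sigma(m)\cdot(\text{element of }B)$; feeding these into (S$'$) applied to $[b,b']$ collapses its right-hand side to $\nu_\sigma(m)\beta(b,b')$ for an explicit $\beta(b,b') \in B$. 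Evaluating $\delta_3([b,b'])(mb'')$ in two ways (as a product in $M$) and cancelling $\nu_\sigma(m)$ via right-$B$-faithfulness of $M$ gives $\beta(b,b') \in \Z_{g_\sigma}(B)$, so $\delta_3([b,b']) \in \pi_A(\Z_\sigma(\T))$. The ideal property of $\delta_3^{-1}(\pi_A(\Z_\sigma(\T)))$ follows by the same template: a witness $\hat b \in \Z_{g_\sigma}(B)$ for $\delta_3(b)$ yields $\nu_\sigma(m)(\hat b - \bar\alpha(b)) = \delta_3(1_B)mb$, and right-multiplying by $b'$ (or substituting $m \mapsto mb'$) converts $\delta_3(1_B)mbb'$ and $\delta_3(1_B)mb'b$ into $\nu_\sigma(m) \cdot (\text{element of }B)$, yielding $\delta_3(bb'),\,\delta_3(b'b) \in \pi_A(\Z_\sigma(\T))$ via (S$'$). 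I expect the main obstacle to be the algebraic bookkeeping: the essential insights are that substituting $m \mapsto a'm$ into (iii) (and $m \mapsto mb'$ into (iv)) is exactly what produces the commutator obstruction, and that faithfulness of $M$ acts as a uniform cancellation principle converting module identities into the centrality statements required for membership in $\pi_B(\Z_\sigma(\T))$ and $\pi_A(\Z_\sigma(\T))$.
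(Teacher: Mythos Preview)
Your argument is correct and follows essentially the same strategy as the paper: both proofs manipulate condition (iii) of Theorem~\ref{descricomm} by the substitution $m\mapsto a'm$, use $\nu_\sigma(a'm)=f_\sigma(a')\nu_\sigma(m)$, and then exploit faithfulness of $M$ to extract the required centrality. Your introduction of $\alpha(a)=\delta_1(a)-f_\sigma(a)\delta_1(1_A)$ is just a repackaging of the paper's two displayed identities \eqref{id1} and \eqref{id2}, and your ideal argument parallels the paper's (which, for $a\in I$, replaces $\nu_\sigma(m)\mu_1(a)$ by $\eta(\mu_1(a))m$ and proceeds the same way).

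One difference worth noting: at the key step the paper asserts that $\delta_1(A)\subseteq \Z_{f_\sigma}(A)$ in order to cancel the term $f_\sigma(a')\delta_1(a)m-\delta_1(a)a'm$ and conclude directly that $\delta_1([a,a'])m=\nu_\sigma(m)\mu_1([a,a'])$. That inclusion is not among the conclusions of Theorem~\ref{descricomm} (only $\delta_1(1_A)\in\Z_{f_\sigma}(A)$ follows, via linearization of the $f_\sigma$-commuting identity at $1_A$). Your route avoids this: you do not cancel that term but absorb it into $\gamma(a,a')$ and then prove $\gamma(a,a')\in\Z_{f_\sigma}(A)$ by a second faithfulness argument ($m\mapsto a''m$). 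In effect, your $\gamma(a,a')$ equals $\delta_1([a,a'])+[a',\delta_1(a)]_{f_\sigma}$, which agrees with the paper's $\delta_1([a,a'])$ precisely when the paper's unjustified claim holds. So your version is a more rigorous execution of the same idea.
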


\begin{proof}
Let us show, for example, that $[A, A] \subseteq \mu^{-1}_1(\pi_B(\Z_\sigma(\T))) \lhd A$. The other condition can be proved analogously.
Set $a, a' \in A$ and $m \in M$. Applications of Theorem \ref{descricomm} (iii) give the following:
\begin{align} \label{id1}
\delta_1(a'a)m - \nu_\sigma(m)\mu_1(a'a) & = f_\sigma(a'a)(\delta_1(1_A)m - \nu_\sigma(m)\mu_1(1_A)) 
\\ \notag
& = f_\sigma(a')(\delta_1(a)m - \nu_\sigma(m)\mu_1(a)),
\end{align}
\begin{align} \label{id2}
\delta_1(aa')m - \nu_\sigma(m)\mu_1(aa') & = f_\sigma(aa')(\delta_1(1_A)m - \nu_\sigma(m)\mu_1(1_A)) 
\\ \notag
& = f_\sigma(a)(f_\sigma(a')\delta_1(1_A)m - f_\sigma(a')\nu_\sigma(m)\mu_1(1_A)) 
\\ \notag
& = f_\sigma(a)(\delta_1(1_A)a'm - \nu_\sigma(a'm)\mu_1(1_A))  
\\ \notag
& = \delta_1(a)a'm - \nu_\sigma(a'm)\mu_1(a).
\end{align}
From \eqref{id1} and \eqref{id2} we get that 
\begin{align*}
\delta_1(&[a,a'])m - \nu_\sigma(m) \mu_1([a, a']) + f_\sigma(a')\delta_1(a)m - \delta_1(a)a'm + \nu_\sigma(a'm)\mu_1(a) 
\\
& - f_\sigma(a')\nu_\sigma(m)\mu_1(a) = 0.
\end{align*}
Taking into account the facts
that $\nu_\sigma(a'm) = f_\sigma(a')\nu_\sigma(m)$ and $\delta_1(A) \subseteq \Z_{f_\sigma}(A)$, the identity above becomes:
\[
\delta_1([a,a'])m = \nu_\sigma(m)\mu_1([a, a']), 
\]
which allows us to conclude that  
\[
\left(  
\begin{array}{ccc}
\delta_1([a,a']) && 0  
\\
&&
\\
  && \mu_1([a, a'])
\end{array}
\right) \in \Z_\sigma(\T),
\]
for all $a, a' \in A$. This implies that $\mu_1([A, A]) \subseteq \pi_B(\Z_\sigma(\T))$. Therefore
$[A, A] \subseteq \mu^{-1}_1(\pi_B(\Z_\sigma(\T)))$. 

Let us show now that $\mu^{-1}_1(\pi_B(\Z_\sigma(\T)))$ is an ideal of $A$. To this end, take $a \in \mu^{-1}_1(\pi_B(\Z_\sigma(\T)))$ and $a' \in A$, $m\in M$. Applying \eqref{id1}, taking into account the fact that $\nu_\sigma(m) \mu_1(a) = \eta(\mu_1(a))m$, which makes sense 
since $\mu_1(a) \in \pi_B(\Z_\sigma(\T))$, we get that 
\[
(\delta_1(a'a) - f_\sigma(a')\delta_1(a) + f_\sigma(a')\eta(\mu_1(a)))m = \nu_\sigma(m) \mu_1(a'a).
\]
This yields 
\[
\left( 
\begin{array}{ccc}
\delta_1(a'a) - f_\sigma(a')\delta_1(a) + f_\sigma(a')\eta(\mu_1(a)) && 0  
\\
&&
\\
  && \mu_1(a'a)
\end{array}
\right) \in \Z_\sigma(\T),
\]
which says that $\mu_1(a'a) \in \pi_B(\Z_\sigma(\T))$, that is, $a'a \in \mu^{-1}_1(\pi_B(\Z_\sigma(\T)))$. To show that $aa' \in \mu^{-1}_1(\pi_B(\Z_\sigma(\T)))$, 
proceed as before, this time applying \eqref{id2}. This proves that $\mu^{-1}_1(\pi_B(\Z_\sigma(\T)))$ is an ideal of $A$, as desired. 
\end{proof}

{\bf Proof of Theorem \ref{caract}.} We are going to show that (i)$\Leftrightarrow$(iii) and (ii)$\Leftrightarrow$(iii).

Let $\Theta$ be a $\sigma$-commuting map of $\T$. 

\noindent (i) $\Rightarrow$ (iii). 
Suppose that $\Theta$ is proper. Then there exist $\lambda \in \Z_\sigma(\T)$ and a linear map $\Omega: \T \to \Z_\sigma(\T)$ such that $\Theta(x) = \lambda x + \Omega(x)$, for all $x \in \T$. We can express $\lambda$ as follows:
\[
\lambda = 
\left( 
\begin{array}{ccc}
a_\lambda  && 0  
\\
&&
\\
  && \eta^{-1}(a_\lambda)  
\end{array}
\right),
\]
for some $a_\lambda \in \pi_A(\Z_\sigma(\T))$. Take $m\in M$ and compute $\Theta(x)$ for $x = \left( 
\begin{array}{cc}
0 & m \\
  & 0
\end{array}
\right)$; let us assume that $\Omega(x) = \left( 
\begin{array}{ccc}
a_m  && 0  
\\
&&
\\
  && \eta^{-1}(a_m)  
\end{array}
\right)$, where $a_m \in \pi_A(\Z_\sigma(\T))$. Then, we have that 
\begin{equation} \label{exp1}
\Theta(x) = 
\left( 
\begin{array}{ccc}
a_m  && a_\lambda m  
\\
&&
\\
  && \eta^{-1}(a_m)  
\end{array}
\right).
\end{equation}
On the other hand, we can write 
\begin{equation} \label{exp2}
\Theta(x) = 
\left( 
\begin{array}{ccc}
\delta_2(m)  && \delta_1(1_A)m - \nu_\sigma(m)\mu_1(1_A)  
\\
&&
\\
  && \mu_2(m) 
\end{array}
\right).
\end{equation}
Therefore, \eqref{exp1} and \eqref{exp2} allow us to conclude that 
\begin{align} \label{prueba0}
& a_m = \delta_2(m), \quad \quad \mu_2(m) = \eta^{-1}(a_m), \\ \label{prueba1}
& a_\lambda m  = \delta_1(1_A)m - \nu_\sigma(m)\mu_1(1_A).  
\end{align}
From \eqref{prueba0} we get that 
\[
\left( 
\begin{array}{ccc}
\delta_2(m)  && 0 
\\
&&
\\
  && \mu_2(m) 
\end{array}
\right) \in \Z_\sigma(\T),
\]
for all $mÊ\in M$. Note that \eqref{prueba1} becomes
\begin{equation} \label{prueba2}
a_\lambda m = \delta_1(1_A)m - \nu_\sigma(m)\mu_1(1_A),  \quad 
\forall \, m \in M,
\end{equation}
by an application of \eqref{prueba0}. Taking into account the fact that $a_\lambda \in \Z_\sigma(\T)$,  \eqref{prueba2} can be rewritten as
\begin{equation*}
\delta_1(1_A)m = \nu_\sigma(m) (\mu_1(1_A) + \eta^{-1}(a_\lambda)), \quad \forall \, m \in M,
\end{equation*}
which implies that
\[
\left( 
\begin{array}{ccc}
\delta_1(1_A)  && 0
\\
&&
\\
  && \mu_1(1_A) + \eta^{-1}(a_\lambda) 
\end{array}
\right) \in \Z_\sigma(\T),
\]
for all $m \in M$. In particular, we have that $\delta_1(1_A) \in \pi_A(\Z_\sigma(\T))$. On the other hand, writing \eqref{prueba2} as
\begin{equation*}
(\delta_1(1_A) - a_\lambda)m = \nu_\sigma(m)\mu_1(1_A),  \quad 
\forall \, m \in M,
\end{equation*} 
we obtain that 
\[
\left( 
\begin{array}{ccc}
\delta_1(1_A) - a_\lambda  && 0 
\\
&&
\\
  && \mu_1(1_A) 
\end{array}
\right) \in \Z_\sigma(\T),
\]
for all $m \in M$. Thus: $\mu_1(1_B) \in \pi_B(\Z_\sigma(\T))$, concluding the proof of (iii).

\smallskip 

\noindent (iii) $\Rightarrow$ (i).

Let us start by noticing that $\delta_1(1_A) \in \pi_A(\Z_\sigma(\T))$ and $\mu_1(1_A) \in \pi_B(\Z_\sigma(\T))$ allow us to write $\eta(\mu_1(1_A))$ and $\eta^{-1}(\delta_1(1_A))$, respectively. Thus, it makes sense to consider the element 
\[
\lambda:= \left( 
\begin{array}{ccc}
\delta_1(1_A) - \eta(\mu_1(1_A)) && 0 
\\
&&
\\
&& \eta^{-1}(\delta_1(1_A)) - \mu_1(1_A)
\end{array}
\right) \in \T.
\]
Note that $\lambda \in \Z_\sigma(\T)$ since 
\[
(\delta_1(1_A) - \eta(\mu_1(1_A)))m = \nu_\sigma(m)(\eta^{-1}(\delta_1(1_A)) - \mu_1(1_A)), \quad 
\forall \, m \in M.
\]
Next, we claim that $\Omega(x) := \Theta(x) - \lambda x \in \Z_\sigma(\T)$,
 for all $x\in \T$. Given $x = \left(
\begin{array}{cc}
a_x & m_x  \\
    & b_x 
\end{array}
\right) \in \T$, we have that 
\[
\lambda x = \left( 
\begin{array}{ccc}
(\delta_1(1_A) - \eta(\mu_1(1_A)))a_x && (\delta_1(1_A) - \eta(\mu_1(1_A)))m_x
\\
&&
\\
&& (\eta^{-1}(\delta_1(1_A)) - \mu_1(1_A))b_x
\end{array}
\right),
\]
which implies that
\allowdisplaybreaks
\begin{align} \label{eqproper}
\Omega(x) & = 
\left( 
\begin{array}{ccc}
\delta_1(a_x) - \delta_1(1_A)a_x + \eta(\mu_1(1_A))a_x && 0
\\
&&
\\
&&  \mu_1(a_x)
\end{array}
\right) 
\\ \notag
& +
\left( 
\begin{array}{ccc}
\delta_3(b_x) &&  - \eta^{-1}(\delta_1(1_A))b_x + \mu_1(1_A)b_x)
\\
&&
\\
&& \mu_3(b_x) - \eta^{-1}(\delta_1(1_A))b_x + \mu_1(1_A)b_x
\end{array}
\right) 
\\ \notag
& +
\left( 
\begin{array}{ccc}
\delta_2(m_x) && 0
\\
&&
\\
&& \mu_2(m_x)
\end{array}
\right). 
\end{align}
To finish, we will show that the three terms in \eqref{eqproper} are indeed in $\Z_\sigma(\T)$. Notice that the 
last term belongs to $\Z_\sigma(\T)$, by hypothesis. Given $m\in M$, it is enough to show that 
\allowdisplaybreaks
\begin{alignat*}{3}
& (\delta_1(a_x) - \delta_1(1_A)a_x + \eta(\mu_1(1_A))a_x)m  -  \nu_\sigma(m)\mu_1(a_x) & = 0,
\\
& \delta_3(b_x) m - \nu_\sigma(m)(\mu_3(b_x) - \eta^{-1}(\delta_1(1_A))b_x + \mu_1(1_A)b_x) &= 0.
\end{alignat*}
Let us start by proving the first identity above. Apply Theorem \ref{descricomm} (iii) to get that 
\begin{align*}
(\delta_1(a_x) & - \delta_1(1_A)a_x + \eta(\mu_1(1_A))a_x)m  - \nu_\sigma(m)\mu_1(a_x) =
\\
& f_\sigma(a_x)\delta_1(1_A)m - f_\sigma(a_x) \nu_\sigma(m)\mu_1(1_A) - \delta_1(1_A)a_x m + \eta(\mu_1(1_A)) a_xm = 
\\
& [a_x, \delta_1(1_A)]_{f_\sigma}m + \eta(\mu_1(1_A)) a_xm - \nu_\sigma(a_xm)\mu_1(1_A) = 0,
\end{align*}
since $\delta_1(1_A) \in \pi_A(\Z_\sigma(\T)) \subseteq \Z_{f_\sigma}(A)$ and $\mu_1(1_A) \in \pi_B(\Z_\sigma(\T))$,  
by (iii). 

It remains to prove the second of the two displayed identities above. Apply Theorem \ref{descricomm} (iv) and (vi) to obtain that  
\begin{align*}
\delta_3(b_x) m & - \nu_\sigma(m)(\mu_3(b_x) - \eta^{-1}(\delta_1(1_A))b_x + \mu_1(1_A)b_x) = 
\\
&
(\delta_3(1_B) m - \nu_\sigma(m) \mu_3(1_B))b_x + \nu_\sigma(m) \eta^{-1}(\delta_1(1_A)) b_x - \nu_\sigma(m) \mu_1(1_A)b_x = 
\\
& 
(\nu_\sigma(m) \mu_1(1_A) - \delta_1 (1_A)m) b_x + \nu_\sigma(m) \eta^{-1}(\delta_1(1_A)) b_x - \nu_\sigma(m) \mu_1(1_A)b_x = 
\\
& 
(\nu_\sigma(m) \eta^{-1}(\delta_1(1_A)) - \delta_1(1_A)m) b_x = 0,
\end{align*}
since $\delta_1(1_A) \in \pi_A(\Z_\sigma(\T))$.

\smallskip 

\noindent (ii) $\Rightarrow$ (iii). 

It is only necessary to show that $\delta_1(1_A) \in \pi_A(\Z_\sigma(\T))$. Given $m\in M$,
notice that $\delta_3(1_B) \in \pi_A(\Z_\sigma(\T))$ allows us to write $\delta_3(1)m = \nu_\sigma(m) \eta^{-1}(\delta_3(1_B))$. Keeping this fact in mind, an application of Theorem \ref{descricomm} (vi) gives the following: 
\[
\delta_1(1_A)m = \nu_\sigma(m)(\mu_1(1_A) + \mu_3(1_B) - \eta^{-1}(\delta_3(1_B))),
\]
which implies that 
\[
\left( 
\begin{array}{ccc}
\delta_1(1_A) && 0 
\\
&&
\\
&& \mu_1(1_A) + \mu_3(1_B) - \eta^{-1}(\delta_3(1_B))
\end{array}
\right) \in \Z_\sigma(\T).
\]
Thus: $\delta_1(1_A) \in \pi_A(\Z_\sigma(\T))$, proving (iii).  

\smallskip 

\noindent (iii) $\Rightarrow$ (ii). 

From $\mu_1(1_A) \in \pi_B(\Z_\sigma(\T))$, we get that $1_A \in \mu^{-1}_1(\pi_B(\Z_\sigma(\T)))$, which is an ideal of $A$ by Lemma \ref{lemaux}. Hence:  $\mu^{-1}_1(\pi_B(\Z_\sigma(\T))) = A$, and therefore $\mu_1(A) \subseteq \pi_B(\Z_\sigma(\T))$. In order to show that $\delta_3(B) \subseteq \pi_A(\Z_\sigma(\T))$, we first need to prove that $\delta_3(1_B) \in \pi_A(\Z_\sigma(\T))$. To this end, apply Theorem \ref{descricomm} (vi), 
taking into account the fact that $\delta_1(1_A)\in \pi_A(\Z_\sigma(\T))$, to obtain that 
\begin{align*}
\delta_3(1_B)m & = \nu_\sigma(m)(\mu_3(1_B) + \mu_1(1_A)) - \delta_1(1_A)m
\\
& = \nu_\sigma(m)(\mu_3(1_B) + \mu_1(1_A) - \eta^{-1}(\delta_1(1_A))), \quad  
\forall \, m \in M.
\end{align*}
This implies that 
\[
\left( 
\begin{array}{ccc}
\delta_3(1_B) && 0 
\\
&&
\\
  && \mu_1(1_A) + \mu_3(1_B) - \eta^{-1}(\delta_1(1_A))
\end{array}
\right) \in \Z_\sigma(\T),
\]
and therefore $\delta_3(1_B) \in \pi_A(\Z_\sigma(\T))$. Now take $b\in B$, $m\in M$ and apply Theorem \ref{descricomm} (iv), taking into account the fact that $\delta_3(1_B)m = \nu_\sigma(m)\eta^{-1}(\delta_3(1_B))$, to get that 
\begin{align*}
\delta_3(b)m & = \nu_\sigma(m)\mu_3(b) - \delta_3(1_B)mb + \nu_\sigma(m)\mu_3(1_B)b 
\\
& = \nu_\sigma(m)(\mu_3(b) - \eta^{-1}(\delta_3(1_B))b +  \mu_3(1_B)b),
\end{align*}
which says that 
\[
\left( 
\begin{array}{ccc}
\delta_3(b) && 0
\\
&&
\\
  &&  \mu_3(b) + \mu_3(1_B)b - \eta^{-1}(\delta_3(1_B))b
\end{array}
\right) \in \Z_\sigma(\T).
\]
Thus: $\delta_3(b) \in \pi_A(\Z_\sigma(\T)$, concluding the proof of Theorem \ref{caract}.
\qed
\bigskip

Our last result states sufficient conditions on a triangular algebra $\T$ to guarantee that all its $\sigma$-commuting maps are proper. 

\begin{theorem} \label{caractcomm}
Let $\T = \Tri(A, M, B)$ be a partible triangular algebra and $\sigma$ an automorphism of $\T$. Suppose that $\T$ satisfies the 
following conditions:
\begin{itemize}
\item[{\rm (i)}] Either $\Z_{f_\sigma}(A) = \pi_A(\Z_\sigma(\T))$ or $B = [B, B]$.
\item[{\rm (ii)}] Either $\Z_{g_\sigma}(B) = \pi_B(\Z_\sigma(\T))$ or $A = [A, A]$.
\item[{\rm (iii)}] There exists $m_0 \in M$ such that 
\[
\Z_\sigma (\T) =
\left\{
\left(
\begin{array}{cc}
a & 0  \\
  &       b 
\end{array}
\right) \in \T, \, \mbox{ such that } \, am_0 = \nu_\sigma(m_0)b 
\right\}.
\]
\end{itemize}
Then every $\sigma$-commuting map of $\T$ is proper.
\end{theorem}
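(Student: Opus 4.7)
The plan is to apply Theorem \ref{caract}: writing an arbitrary $\sigma$-commuting map $\Theta$ of $\T$ in the canonical form \eqref{descri} supplied by Theorem \ref{descricomm}, it suffices to verify the three conditions appearing in Theorem \ref{caract}(iii), namely that $\delta_1(1_A)\in\pi_A(\Z_\sigma(\T))$, that $\mu_1(1_A)\in\pi_B(\Z_\sigma(\T))$, and that the diagonal element $\left(\begin{smallmatrix}\delta_2(m)&0\\ 0&\mu_2(m)\end{smallmatrix}\right)$ lies in $\Z_\sigma(\T)$ for every $m\in M$.

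For $\delta_1(1_A)\in \pi_A(\Z_\sigma(\T))$ I split according to the alternative in hypothesis (i). If $\Z_{f_\sigma}(A)=\pi_A(\Z_\sigma(\T))$, then linearizing the $f_\sigma$-commuting identity $[a,\delta_1(a)]_{f_\sigma}=0$ of Theorem \ref{descricomm}(i) and specializing the resulting symmetric identity at $1_A$ yields $[a,\delta_1(1_A)]_{f_\sigma}=0$ for all $a\in A$, hence $\delta_1(1_A)\in\Z_{f_\sigma}(A)=\pi_A(\Z_\sigma(\T))$. If instead $B=[B,B]$, then the symmetric form of Lemma \ref{lemaux} forces $\delta_3(1_B)\in\pi_A(\Z_\sigma(\T))$; rewriting Theorem \ref{descricomm}(vi) as
\[
(\delta_1(1_A)+\delta_3(1_B))\,m=\nu_\sigma(m)(\mu_1(1_A)+\mu_3(1_B)) \quad \text{for all } m\in M
\]
exhibits $\delta_1(1_A)+\delta_3(1_B)$ as an element of $\pi_A(\Z_\sigma(\T))$, so subtracting $\delta_3(1_B)$ gives the conclusion. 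The verification of $\mu_1(1_A)\in\pi_B(\Z_\sigma(\T))$ from hypothesis (ii) is completely parallel: under the first alternative it is immediate from $\mu_1(A)\subseteq \Z_{g_\sigma}(B)$ (which is built into the statement of Theorem \ref{descricomm}), and under $A=[A,A]$ it is a direct application of Lemma \ref{lemaux}.

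The third condition is the delicate step, and this is where hypothesis (iii) enters decisively. By (iii), membership of $\left(\begin{smallmatrix}\delta_2(m)&0\\ 0&\mu_2(m)\end{smallmatrix}\right)$ in $\Z_\sigma(\T)$ is equivalent to the single equality $\delta_2(m)\,m_0=\nu_\sigma(m_0)\mu_2(m)$. I first verify this when $m=m_0$: Theorem \ref{descricomm}(v) evaluated at $m_0$ is precisely $\delta_2(m_0)m_0=\nu_\sigma(m_0)\mu_2(m_0)$, so the diagonal element associated to $m_0$ already belongs to $\Z_\sigma(\T)$, and Lemma \ref{lemmacenter} then yields $\delta_2(m_0)m=\nu_\sigma(m)\mu_2(m_0)$ for every $m\in M$. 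To extend the identity to an arbitrary $m$, I apply the linearized $\sigma$-commuting relation \eqref{eqcomm2} with $x=m\in M$ and $y=m_0\in M$; after expanding $[m,\Theta(m_0)]_\sigma+[m_0,\Theta(m)]_\sigma=0$ and discarding all terms that vanish because the Peirce products $M\cdot M$, $M\cdot A$ and $B\cdot M$ are zero, the equality reduces to
\[
\nu_\sigma(m)\mu_2(m_0)-\delta_2(m_0)m+\nu_\sigma(m_0)\mu_2(m)-\delta_2(m)m_0=0.
\]
The first two terms cancel by the previous step, leaving $\delta_2(m)m_0=\nu_\sigma(m_0)\mu_2(m)$, which is the required identity.

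The main obstacle I anticipate is precisely this final bootstrap in the third condition: Theorem \ref{descricomm}(v) only provides the diagonal identity $\delta_2(m)m=\nu_\sigma(m)\mu_2(m)$, which on its own is too weak to force centrality of the associated element. Hypothesis (iii) is essential because it collapses the centrality test to a single element $m_0$, after which the linearized $\sigma$-commuting identity at the pair $(m,m_0)$ propagates the information from $m_0$ to all of $M$. Once the three conditions of Theorem \ref{caract}(iii) have been established, Theorem \ref{caract} concludes immediately that $\Theta$ is proper.
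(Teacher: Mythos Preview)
Your proof is correct and follows essentially the same route as the paper. The only cosmetic difference is that you verify condition~(iii) of Theorem~\ref{caract} (membership of $\delta_1(1_A)$ and $\mu_1(1_A)$ in the appropriate projections of $\Z_\sigma(\T)$), whereas the paper verifies the equivalent condition~(ii) (the inclusions $\mu_1(A)\subseteq\pi_B(\Z_\sigma(\T))$ and $\delta_3(B)\subseteq\pi_A(\Z_\sigma(\T))$); the paper's choice makes the case split under hypotheses~(i) and~(ii) slightly shorter, since it avoids the extra passage through identity~(vi), but both are valid. Your treatment of the third condition---evaluating Theorem~\ref{descricomm}(v) at $m_0$, invoking hypothesis~(iii), and then linearizing via \eqref{eqcomm2} at the pair $(m,m_0)$---is exactly the paper's argument (the paper phrases the linearization as applying~(v) at $m+m_0$, which is the same computation).
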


\begin{proof}
Let $\Theta$ be a $\sigma$-commuting map of $\T$. By Theorem \ref{descricomm} we have that $\mu_1(A) \subseteq \Z_{g_\sigma}(B)$. If $\Z_{g_\sigma}(B) = \pi_B(\Z_\sigma(\T))$, we can conclude that $\mu_1(A) \subseteq \pi_B(\Z_\sigma(\T))$. On the other hand, if $A = [A, A]$, apply Lemma \ref{lemaux}  to get that $A \subseteq \mu^{-1}_1(\pi_B(\Z_\sigma(\T)))$, that is, $\mu_1(A) \subseteq \pi_B(\Z_\sigma(\T))$. Reasoning as above, 
now using the fact that $\delta_3(B) \subseteq \Z_{f_\sigma}(A)$ and applying (i), we get that $\delta_3(B) \subseteq \pi_A(\Z_\sigma(\T))$.

In view of Theorem \ref{caract}, to prove that $\Theta$ is proper, it remains to show that 
\begin{equation} \label{show}
\left( 
\begin{array}{ccc}
\delta_2(m) && 0 
\\
&&
\\
  && \mu_2(m)
\end{array}
\right) \in \Z_\sigma(\T), \quad 
\forall \, m \in M.
\end{equation}
From Theorem \ref{descricomm} we have that $\delta_2(M) \subseteq \Z_{f_\sigma}(A)$ and $\mu_2(M) \subseteq \Z_{g_\sigma}(B)$, which imply that 
\begin{equation} \label{center}
\delta_2(m)m = \nu_\sigma(m)\mu_2(m), \quad 
\forall \, m \in M.
\end{equation}
In particular, for $m = m_0$ we get that $\delta_2(m_0)m_0 = \nu_\sigma(m_0)\mu_2(m_0)$, which gives
\[
\left( 
\begin{array}{ccc}
\delta_2(m_0) && 0 
\\
&&
\\
  && \mu_2(m_0)
\end{array}
\right) \in \Z_\sigma(\T),
\]
by an application of (iii). Therefore:
\[
\left[ \left( 
\begin{array}{cc}
0 & m
\\
&
\\
  & 0
\end{array}
\right),
\left( 
\begin{array}{ccc}
\delta_2(m_0) && 0
\\
&&
\\
  && \mu_2(m_0)
\end{array}
\right)
\right]_\sigma = 0, \quad 
\forall \, m \in M.
\]
Expanding the above product, we get that 
\begin{equation} \label{centermix}
\delta_2(m_0)m = \nu_\sigma(m)\mu_(m_0), \quad 
\forall \, m \in M.
\end{equation}
Given $m\in M$, applying \eqref{center} with $m + m_0$ and making use of \eqref{centermix}, 
we get that 
\[
\delta_2(m)m_0 = \nu_\sigma(m_0)\mu_(m),
\]
which, by an application of (iii), proves \eqref{show}, concluding the proof of the theorem. 
\end{proof}

\begin{remark}
Let $\sigma$ be an automorphism of an algebra $\A$. Notice that, as happened with biderivations and commuting maps, the study of the properness of $\sigma$-commuting maps of $\A$ can be reduced to the study of the innerness of its $\sigma$-biderivations. Let $\Theta$ be any $\sigma$-commuting map of $\A$; applying Lemma \ref{propertiesnewcomm} (i) and \eqref{eqcomm2},
it is not difficult to prove that the map $D_\Theta: \A \times \A \to \A$ given by $D_\Theta (x, y) = [x, \Theta(y)]_\sigma$, for all $x, y \in \A$, is a $\sigma$-biderivation of $\A$. Let us assume now that $D_\Theta$ is inner, i.e., $D_\Theta (x, y) = \lambda [x, y]$, for some $\lambda \in \Z_\sigma (\A)$. This implies that the map $\Omega_\Theta(y):= \Theta(y) - \lambda y$ is $\sigma$-central; in fact:
\begin{align*}
[x, \Omega_\Theta(y)]_\sigma & = [x, \Theta(y) - \lambda y]_\sigma = \sigma(x)\Theta(y) - \sigma(x)\lambda y - \Theta(y)x + \lambda y x 
\\
& = [x, \Theta(y)]_\sigma - \lambda x y + \lambda y x = [x, \Theta(y)]_\sigma - \lambda [x, y] 
\\
& = D_\Theta(x, y) - \lambda [x, y] = 0,
\end{align*}
for all $x \in \A$. Therefore, we can conclude that $\Theta$ is a proper $\sigma$-commuting map. 

Suppose now that $\A$ is a triangular algebra and notice that $D_\Theta(p, p) = 0$. In other words, $D_\Theta$ is one of the $\sigma$-biderivations studied in Theorem \ref{innercond}. Taking into account the calculations above, under the assumptions of Theorem \ref{innercond}, we can guarantee 
that every $\sigma$-commuting map of $\A$ is proper. Nevertheless, Theorem \ref{caractcomm} above gives us 
more precise information when dealing directly with $\sigma$-commuting maps; it is a generalization of \cite[Theorem 2]{Ch2}.
\end{remark}

The next result shows that, under some mild conditions, the identity is the only commuting automorphism of a triangular algebra.

\begin{theorem}
Let $\T = \Tri(A, M, B)$ be a partible triangular algebra. If $\sigma$ is a commuting automorphism of $\T$,
then $\sigma = \Id_\T$.
\end{theorem}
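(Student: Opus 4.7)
My plan is to exploit the partibility of $\sigma$ to pin down $\sigma(p)$, and then read off $\sigma = \Id_\T$ from the commutator equation applied to elements of the form $a + m$ and $m + b$. Since $\T$ is partible I write $\sigma = \phi_z \bar{\sigma}$ with $\bar{\sigma}(A) = A$, $\bar{\sigma}(M) = M$, $\bar{\sigma}(B) = B$ and $\phi_z(x) = z^{-1}xz$. The restriction $\bar{\sigma}|_A$ is an algebra automorphism of $A$, so it fixes $1_A$; that is, $\bar{\sigma}(p) = p$, and hence $\sigma(p) = z^{-1}pz$. Writing $z = z_A + z_M + z_B$ in its Peirce components and using the standard formula for the inverse of an upper-triangular block element, a short direct calculation yields
\[
\sigma(p) \;=\; z^{-1}pz \;=\; p + z_A^{-1}z_M,
\]
with $z_A^{-1}z_M \in M$. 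The commuting hypothesis gives $[\sigma(p),p] = -z_A^{-1}z_M = 0$, and since $z_A$ is invertible we obtain $z_M = 0$. Therefore $\sigma(p) = p$ and $\sigma(q) = q$.

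Once $\sigma$ fixes $p$, multiplicativity forces $\sigma(A) = \sigma(p\T p) = p\T p = A$, and analogously $\sigma(M) = M$ and $\sigma(B) = B$. For arbitrary $a \in A$ and $m \in M$, applying $\sigma(x)x = x\sigma(x)$ to $x = a + m$ and using $Ma = 0$ and $M^2 = 0$, the equation splits along the Peirce pieces; the $A$-component reproduces $[\sigma(a),a] = 0$, and the $M$-component reads
\[
\sigma(a)\,m \;=\; a\,\sigma(m), \quad \forall\, a \in A,\ m \in M.
\]
Taking $a = 1_A$ gives $\sigma(m) = m$ for every $m\in M$, and substituting this back yields $(\sigma(a) - a)m = 0$ for all $m$; the faithfulness of $M$ as a left $A$-module then forces $\sigma(a) = a$. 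A symmetric argument with $x = m + b$ and the right-faithfulness of $M$ over $B$ delivers $\sigma(b) = b$ for every $b \in B$, and we conclude $\sigma = \Id_\T$.

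The one delicate step, and the only place where partibility is really needed, is pinning down $\sigma(p)$. The commuting hypothesis on its own only tells us that $\sigma(p)$ is an idempotent commuting with $p$, which in a general triangular algebra leaves $\sigma(p)$ free to be any ``diagonal'' idempotent $e + f$ with $e \in A$ and $f \in B$; ruling out such nontrivial possibilities directly would require extra structural hypotheses in the spirit of Condition (I). Partibility sidesteps this obstacle by presenting $\sigma(p)$ as the conjugate $z^{-1}pz$, which is forced to equal $p$ by a one-line computation, and from that point on the argument is routine.
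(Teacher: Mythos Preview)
Your proof is correct and, once you have reduced to the case where $\sigma$ preserves the Peirce pieces, it is essentially the paper's argument: both you and the paper apply the (linearized) commuting condition to the pair $(a,m)$ to obtain $\sigma(a)m = a\,\sigma(m)$, specialize to $a = 1_A$ to get $\sigma|_M = \Id_M$, and then invoke faithfulness of $M$ to force $\sigma|_A = \Id_A$ (and symmetrically for $B$).

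The difference is your opening move. The paper's proof invokes $f_\sigma$, $g_\sigma$, $\nu_\sigma$ from the outset, i.e.\ it tacitly assumes $\sigma(A)=A$, $\sigma(M)=M$, $\sigma(B)=B$; this is the standing convention of Section~6 for the automorphism appearing in ``$\sigma$-commuting'', but here $\sigma$ is the \emph{map} under study (an $\Id_\T$-commuting map), so that convention does not literally apply. You supply the missing justification: from partibility you write $\sigma = \phi_z\bar\sigma$, note that $\bar\sigma(p)=p$, compute $\sigma(p) = z^{-1}pz = p + z_A^{-1}z_M$, and use $[\sigma(p),p]=0$ to kill $z_M$ and conclude $\sigma(p)=p$, whence $\sigma$ preserves $A$, $M$, $B$. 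This is a clean and genuinely useful addition; your closing paragraph correctly identifies it as the only place partibility is really used. After that, the two proofs coincide.
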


\begin{proof}
Let $\sigma$ be a commuting automorphism of $\T$ and consider the elements $x = \left(
\begin{array}{cc}
a & 0  \\
  & 0 
\end{array}
\right)$ and 
$y = \left(
\begin{array}{cc}
0 & m  \\
  & 0 
\end{array}
\right)$; 
since 
$[x, \sigma(y)] + [y, \sigma(x)] = 0$, 
we get that $a\nu_\sigma(m) - f_\sigma(a)m = 0$,
for all $a \in A$ and $m \in M$. In particular, for $a = 1_A$ we get that $\nu_\sigma = \Id_M$,
which implies that 
\[
(a - f_\sigma(a))M = 0, \quad \quad \quad M(b - g_\sigma(b)) = 0,
\]
for all $a \in A$, $b \in B$. Therefore: $f_\sigma = \Id_A$ and $g_\sigma = \Id_B$, since 
$M$ is a faithful $(A, B)$-bimodule.  
\end{proof}

We close the section by showing that Posner's theorem also holds for partible triangular algebras.

\begin{theorem}
Let $\T = \Tri(A, M, B)$ be a partible triangular algebra and $\sigma$ an automorphism of $\T$. If a $\sigma$-derivation $d$ of $\T$ is $\sigma$-commuting, then $d = 0$.
\end{theorem}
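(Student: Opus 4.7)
The plan is to combine the canonical form for $\sigma$-commuting maps provided by Theorem~\ref{descricomm} with the $\sigma$-derivation identity, and then use the double faithfulness of $M$ to annihilate each component of $d$ in turn. As in the preceding results of this section, I may assume without loss of generality that $\sigma(A)=A$, $\sigma(M)=M$, $\sigma(B)=B$; Theorem~\ref{descricomm} then writes $d$ in the form \eqref{descri} via six linear maps $\delta_1,\delta_2,\delta_3,\mu_1,\mu_2,\mu_3$ satisfying (i)--(vi).

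First I would use that any $\sigma$-derivation satisfies $d(1_\T)=0$, since $d(1)=d(1)+\sigma(1)d(1)=2d(1)$. Applying the $\sigma$-derivation rule to $p^{2}=p$ and $q^{2}=q$ and reading off diagonal entries then yields
\[
\delta_1(1_A)=\mu_1(1_A)=\delta_3(1_B)=\mu_3(1_B)=0.
\]
Consequently the off-diagonal block of $d$ in \eqref{descri} is identically zero, and conditions (iii) and (iv) of Theorem~\ref{descricomm} collapse to the two key identities
\[
\delta_1(a)m=\nu_\sigma(m)\mu_1(a),\qquad \nu_\sigma(m)\mu_3(b)=\delta_3(b)m,
\]
for all $a\in A$, $b\in B$, $m\in M$.

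Next I would test the $\sigma$-derivation rule against the product $am$: comparing $d(am)=d(a)m+\sigma(a)d(m)$ entry-by-entry with the evaluation of $d$ on $am$ via \eqref{descri}, the $(1,2)$-entry yields $\delta_1(a)m=0$, so faithfulness of $M$ as a left $A$-module forces $\delta_1=0$; the first key identity then forces $\nu_\sigma(m)\mu_1(a)=0$, and bijectivity of $\nu_\sigma$ together with faithfulness of $M$ as a right $B$-module yields $\mu_1=0$. The $(2,2)$-entry reads $\mu_2(am)=0$, so setting $a=1_A$ gives $\mu_2=0$. A symmetric computation based on $d(mb)=d(m)b+\sigma(m)d(b)$ treats the other half: the $(1,2)$-entry gives $\nu_\sigma(m)\mu_3(b)=0$, hence $\mu_3=0$; the second key identity then delivers $\delta_3=0$; and the $(1,1)$-entry $\delta_2(mb)=0$ with $b=1_B$ gives $\delta_2=0$. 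All six components vanish, whence $d=0$.

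The argument is essentially bookkeeping, so there is no serious obstacle. The only substantive ingredients are Theorem~\ref{descricomm} (which encodes the $\sigma$-commuting hypothesis in a rigid enough form), the $\sigma$-derivation rule tested against the three products $p^{2}$, $am$, and $mb$, and the double faithfulness of $M$. No hypothesis on $\sigma$ or on $\T$ beyond partibility is needed.
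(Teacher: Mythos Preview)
Your argument is correct. It is, however, the mirror image of the paper's proof rather than the same proof. The paper starts from the Han--Wei description of $\sigma$-derivations \cite[Theorem~3.12]{HW}, which gives $d$ in the form $d\left(\begin{smallmatrix}a&m\\&b\end{smallmatrix}\right)=\left(\begin{smallmatrix}d_A(a)&f_\sigma(a)m_d-m_db+\xi_d(m)\\&d_B(b)\end{smallmatrix}\right)$, and then imposes the $\sigma$-commuting identity \eqref{eqcomm2} on the pair $(a,m)$ to kill $\xi_d$, $d_A$, $d_B$ (via faithfulness of $M$), and finally \eqref{eqcomm1} at $x=p$ to kill $m_d$. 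You instead start from the $\sigma$-commuting description of Theorem~\ref{descricomm} and impose the $\sigma$-derivation rule on $p^2$, $q^2$, $am$, $mb$. Both routes are short and rely on the same mechanism---reading off blocks and invoking the two-sided faithfulness of $M$---but yours has the minor advantage of being self-contained within the paper (no appeal to \cite{HW}), while the paper's version makes the $\sigma$-derivation structure explicit from the outset. One small redundancy: your observation $d(1_\T)=0$ is not actually used, since the computations at $p^2=p$ and $q^2=q$ already deliver $\delta_1(1_A)=\mu_1(1_A)=\delta_3(1_B)=\mu_3(1_B)=0$ directly.
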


\begin{proof}
Let $d$ be a $\sigma$-derivation of $\T$
which is $\sigma$-commuting. By \cite[Theorem 3.12]{HW},  
we know that $d$ is of the following form:
\begin{equation*}
d \left(
\begin{array}{cc}
a & m  \\
  & b 
\end{array}
\right) = 
\left(
\begin{array}{ccc}
d_A(a) && f_\sigma(a)m_d - m_d b  + \xi_d(m)  
\\
&&
\\
  && d_B(b) 
\end{array}
\right),
\end{equation*}
where $d_A$ is an $f_\sigma$-derivation of $A$, $d_B$ is a $g_\sigma$-derivation of $B$, $m_d$ is a fixed element of $M$, and $\xi_d: M \to M$ is a linear map
which satisfies 
\begin{equation} \label{properxi}
\xi_d(am) = d_A(a)m + f_\sigma(a)\xi(m), \quad \quad \quad \xi(mb) = \xi(m)b + \nu_\sigma(m) d_B(b),
\end{equation} 
for all $a \in A$, $b \in B$, $m \in M$. An application of \eqref{eqcomm2} with 
$x = \left(
\begin{array}{cc}
a & 0  \\
  & 0 
\end{array}
\right)$ and 
$y = \left(
\begin{array}{cc}
0 & m  \\
  & 0 
\end{array}
\right)$ gives that $f_\sigma(a)\xi(m) - d_A(a)m = 0$, 
for all $a \in A$ and $m \in M$. In particular, substituting 
$a = 1_A$ we get that $\xi = 0$. Then from \eqref{properxi} we obtain that 
\[
d_A(a)M = 0, \quad \quad \nu_\sigma(M)d_B(b) = 0,
\]
for all $a \in A$ and $b \in B$. Since 
$\nu_\sigma$ is bijective and $M$ is a faithful $(A, B)$-bimodule, we see  
that $d_A = d_B = 0$. Applying \eqref{eqcomm1} with $x = p$, 
we get that $m_d = 0$, finishing the proof.
\end{proof}

\begin{corollary}
If $d$ is a commuting derivation of a partible triangular algebra, then $d = 0$.
\end{corollary}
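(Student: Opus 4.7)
The plan is to deduce this corollary directly from the preceding theorem by specializing $\sigma = \Id_\T$. First I would observe that $\Id_\T$ is indeed an automorphism of $\T$, and moreover it is (trivially) a partible automorphism, so the hypotheses of the theorem are available for this choice of $\sigma$.

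Next I would unpack the definitions. An ordinary derivation $d$ of $\T$ satisfies $d(xy) = d(x)y + xd(y)$, which is exactly the condition $d(xy) = d(x)y + \Id_\T(x)d(y)$; that is, $d$ is an $\Id_\T$-derivation. Likewise, $d$ being commuting means $[d(x), x] = 0$ for every $x \in \T$, equivalently $d(x)x = xd(x) = \Id_\T(x)d(x)$, which is precisely the condition that $d$ is an $\Id_\T$-commuting map.

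With these identifications in place, the hypotheses of the previous theorem are satisfied with $\sigma = \Id_\T$, and its conclusion yields $d = 0$. There is no substantive obstacle here; the corollary is a direct translation of the theorem to the classical (non-twisted) setting.
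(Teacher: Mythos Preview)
Your proposal is correct and matches the paper's intended argument: the corollary is stated immediately after the theorem with no proof given, precisely because it is the specialization $\sigma = \Id_\T$ you describe. One minor remark: the theorem's hypothesis is that $\T$ is partible (already assumed in the corollary), not merely that the particular automorphism is partible, so your observation that $\Id_\T$ is a partible automorphism is true but not quite the relevant check.
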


\section{Some facts about $(\alpha, \beta)$-biderivations, $(\alpha, \beta)$-commuting maps and generalized matrix algebras.} \label{comments}

In 2011, Xiao and Wei \cite{XW} introduced a generalization of $\sigma$-biderivations and $\sigma$-commuting maps
that they named $(\alpha, \beta)$-biderivations and $(\alpha, \beta)$-commuting maps. In their paper, they studied $(\alpha, \beta)$-biderivations and $(\alpha, \beta)$-commuting maps of nest algebras. In the present paper, we have investigated $\sigma$-biderivations and $\sigma$-commuting maps since, as will be shown below, the study of $(\alpha, \beta)$-biderivations (respectively, $(\alpha, \beta)$-commuting maps) of any algebra can be reduced to the study of its $\sigma$-biderivations (respectively, $\sigma$-commuting maps).

Let $\A$ be an algebra and $\alpha, \beta, \sigma$ automorphisms of $\A$. Recall that a linear map $d: \A \to \A$ is called an {\bf $(\alpha, \beta)$-derivation of} $\A$ if it satisfies 
\[
d(xy) = d(x)\beta(y) + \alpha(x)d(y), \quad 
\forall \, x, y \in \A.
\]
A bilinear map $D: \A \times \A \to \A$ is said to be an {\bf $(\alpha, \beta)$-biderivation of} $\A$ if it is an $(\alpha, \beta)$-derivation in each argument. An {\bf $(\alpha, \beta)$-commuting} map
of $\A$ is a linear map $\Theta$ satisfying that $\Theta(x)\alpha(x) = \beta(x)\Theta(x)$, 
for all $x \in \A$. Clearly, every $\sigma$-derivation can be seen as an {\bf $(\alpha, \beta)$-derivation} with
$\alpha = \Id_\A$ and $\beta = \sigma$. The same can be said for $\sigma$-biderivations and $\sigma$-commuting maps.

It is straightforward to check that every $(\alpha, \beta)$-biderivation $D$ (respectively, $(\alpha, \beta)$-commuting map $\Theta$) of $\A$ gives rise to an $\alpha^{-1}\beta$-biderivation (respectively, $\alpha^{-1}\beta$-commuting map) by considering $\alpha^{-1}D$ (respectively, $\alpha^{-1}\Theta$). 
Accordingly, it is sufficient to restrict attention to $\sigma$-biderivations (respectively, $\sigma$-commuting maps).

In the last few years, many results on maps of triangular algebras have been extended to the setting of generalized matrix algebras (GMAs); see, for example, \cite{DW1, DW2, LW, XW} and references therein.
GMAs were introduced by Sands \cite{Sa} in the early 1970s. He ended up with these structures during his study of radicals of rings in Morita contexts. Specifically, a {\bf Morita context}
\[
(A, B, M, N, \Phi_{MN}, \Psi_{NM}),
\] 
consists of two $R$-algebras $A$ and $B$, two bimodules $_AM_B$ and $_BN_A$, and two bimodule homomorphisms $\Phi_{MN}: M \otimes_B N \to A$ and $\Psi_{NM}: N\otimes_A M \to B$ such that the following diagrams 
commute.

\[
\begin{CD}
M \otimes_B N \otimes_A M @>\Phi_{MN}\otimes I_M>> A \otimes_A M 
\\
@VVV @V\cong VV
\\
M \otimes_B B @>\cong>> M
\end{CD} 
\quad \quad \quad \, 
\begin{CD}
N \otimes_A M \otimes_B N
@>\Psi_{NM}\otimes I_N>> B \otimes_B N 
\\
@VVV @V \cong VV
\\
N \otimes_A A @>\cong>> N
\end{CD}
\]

\medskip

Let $(A, B, M, N, \Phi_{MN}, \Psi_{NM})$ be a Morita context, where at least one of the two bimodules is nonzero. The set 
\[
\mathcal{G} = \left(
\begin{array}{cc}
A & M  \\
N & B 
\end{array}
\right) = 
\left\{
\left(
\begin{array}{cc}
a & m  \\
n & b 
\end{array}
\right): \, a \in A, m\in M, n\in N, b \in B 
\right\},
\]

\noindent can be endowed with an $R$-algebra structure under the usual matrix operations. We will call $\mathcal{G}$ a {\bf generalized matrix algebra}. Note that any triangular algebra can be seen as a generalized matrix algebra with $N = 0$.
 
The extension of our results proceeds through the study of automorphisms of GMAs. However, the description of automorphisms of GMAs is still an open problem. (See \cite[Question 4.4]{LW})

\section*{Acknowledgements}

The second and third authors were supported by a grant from the Natural Sciences and Engineering Research 
Council (Canada). The first and third authors were supported by the Spanish MEC and Fondos FEDER 
jointly through project MTM2010-15223, and by the Junta de Andaluc\'ia (projects FQM-336, FQM2467 and FQM7156).
The third author thanks Professor Nantel Berger\'on, the Department of Mathematics at the University of Toronto and the Fields Institute for her visit from August 2012 to September 2013.

\end{document}